\theoremstyle{plain}
\newtheorem{theorem}{Theorem}[section]
\newtheorem{proposition}[theorem]{Proposition}
\newtheorem{corollary}[theorem]{Corollary}
\newtheorem{lemma}[theorem]{Lemma}}
\theoremstyle{definition}
\newtheorem{example}[theorem]{Example}
\newtheorem{definition}[theorem]{Definition}
\newtheorem{construction}[theorem]{Construction}
\newtheorem{question}[theorem]{Question}}
\theoremstyle{remark}
\newtheorem{remark}[theorem]{Remark}}
\newcommand{\mult}{\operatorname{mult}}
\newcommand{\aut}{\operatorname{Aut}}
\newcommand{\id}{\operatorname{id}}
\newcommand{\vdim}{\operatorname{vdim}}
\newcommand{\vt}{\operatorname{Vert}}
\newcommand{\eg}{\operatorname{Edge}}
\newcommand{\dv}{\operatorname{div}}
\newcommand{\trop}{\operatorname{trop}}
\newcommand{\el}{\mathcal{E}}
\newcommand{\cl}{\mathcal{C}}
\newcommand{\nl}{\mathcal{N}}
\newcommand{\ol}{\mathcal{O}}
\newcommand{\pl}{\mathcal{P}}
\newcommand{\dl}{\mathcal{D}}
\newcommand{\yl}{\mathcal{Y}}
\newcommand{\xl}{\mathcal{X}}
\newcommand{\gl}{\mathcal{G}}
\newcommand{\rb}{\mathbb{R}}
\newcommand{\fb}{\mathbb{F}}
\newcommand{~}{\quad}
\newcommand{\cb}{\mathbb{C}}
\newcommand{\zb}{\mathbb{Z}}
\newcommand{\pb}{\mathbb{P}}
\newcommand{\qb}{\mathbb{Q}}
\newcommand{\eb}{\mathbb{E}}
\newcommand{\ab}{\mathbb{A}}
\newcommand{\xk}{\mathfrak{X}}
\newcommand{\undl}{\underline}
\newcommand{\wt}{\widetilde}
\definecolor{cardinalred}{RGB}{140,21,21}
\definecolor{coolgray}{RGB}{77,79,83}
\definecolor{black}{RGB}{0,0,0}
\definecolor{beige}{RGB}{210,194,149}
\definecolor{darkbeige}{RGB}{179,153,93}
\definecolor{darkcardinal}{RGB}{94,48,50}
\definecolor{lightcardinal}{RGB}{141,60,30}
\definecolor{darkpurple}{RGB}{83,40,79}
\definecolor{darkcyan}{RGB}{0,124,146}
\definecolor{skyblue}{RGB}{0,152,219}
\definecolor{seablue}{RGB}{10,100,180}
\definecolor{darkblue}{RGB}{20,80,150}
\definecolor{treegreen}{RGB}{0,155,118}
\definecolor{darkorange}{RGB}{168,101,12}
\definecolor{beigegray}{RGB}{95,87,79}
\definecolor{boxgray}{RGB}{238,235,233}
\definecolor{footergray}{RGB}{199,209,197}
\begin{document}

\title[Refined floor diagrams and Caporaso-Harris formula]{Refined floor diagrams relative to a conic and Caporaso-Harris type formula}

\author{Yanqiao Ding}

\address{School of Mathematics and Statistics, Zhengzhou University, Zhengzhou, 450001, China}

\email{yqding@zzu.edu.cn}

\author{Jianxun Hu}

\address{School of Mathematics, Sun Yat-sen University, Guangzhou, 510275, China}

\email{stsjxhu@mail.sysu.edu.cn}

\subjclass[2020]{Primary 14N35; Secondary 14T20}

\keywords{BPS polynomials, Gromov-Witten invariants, Refined floor diagrams, Caporaso-Harris formula.}

\date{\today}

\begin{abstract}
    We prove a $q$-refined correspondence theorem between higher genus relative Gromov-Witten invariants with a Lambda class $\lambda_{g-g'}$ insertion in the blow-up of $\pb^2$ at $k$ points on a conic and the refined counts of genus $g'$ floor diagrams relative to a conic, after the change of variables $q=e^{iu}$.
    We provide a Caporaso-Harris type recursive formula for the refined counts of higher genus floor diagrams. 
    As an application of the correspondence theorem, we propose a higher genus version of the BPS polynomials of del Pezzo surfaces of degree $\geq3$ and Hirzebruch surfaces, which generalize the higher genus Block-G\"ottsche polynomials. 
\end{abstract}

\maketitle

\tableofcontents

\section{Introduction}
The study of counting curves in algebraic surfaces has a long history and has attracted a lot of interest in recent decades. Various enumerative invariants have been defined to address the problem of counting curves on a surface from different perspectives. For example, Gromov-Witten invariants of a surface ``count" the number of complex curves in the surface; Welschinger invariants of a real rational surface provide lower bounds for the number of real rational curves in the surface \cite{wel2005a}; refined invariants of a complex surface count curves in a refined way \cite{gs14}. 
In his remarkable paper \cite{mikhalkin05}, Mikhalkin showed that Gromov-Witten invariants and Welschinger invariants of toric surfaces can be computed using tropical curves with complex and real multiplicities, respectively.
Block and G\"ottsche \cite{bg2016} discovered that complex and real multiplicities can be described using quantum integers (see Section \ref{sec:2.2}) and provided a refined multiplicity for tropical curves.
The Block-G\"ottsche multiplicity is a symmetric Laurent polynomial in the variable $q^{\frac{1}{2}}$ with integer coefficients. When $q=1$,
the refined multiplicity coincides with the complex multiplicity, and when $q=-1$, the refined multiplicity recovers the real multiplicity. 
Itenberg-Mikhalkin \cite{im13} showed that the refined count of tropical curves of fixed degree and genus passing through a configuration of points is an invariant when the configuration is generic.
Such invariants are called the Block-G\"ottsche polynomials of toric surfaces.
Since then, considerable effort has been devoted to exploring the refined counts of curves in the sense of Block and G\"ottsche \cite{bs19,fs15,mandel21,gk16,bg16,ss18,shustin20,gs19,blomme22}, as well as to providing enumerative explanations for the refined counts \cite{mik17,bou2019,nps18}. Refined enumerative geometry has now become one of the most important topics in enumerative geometry.

Mikhalkin \cite{mik17} made a breakthrough in this direction. He provided an interesting perspective for understanding the Block-G\"ottsche multiplicity, 
which was related to a generating function for the count of real rational curves on a toric surface with a fixed area of the amoeba and passing through a configuration of real points on the toric boundary of the surface. 
Blomme extended Mikhalkin's correspondence theorem to the case that real rational curves pass through a configuration of real points 
and pairs of complex conjugated points in toric divisors \cite{blomme20}, and to higher dimensional cases \cite{blomme24-jag}. 
Recently, Itenberg and Shustin extended Mikhalkin's result to the case of curves of genus $1$ and $2$ \cite{is24},
and they introduced new invariants of a class of toric surfaces that arise from an appropriate enumeration of real curves of genus 1 and 2.
Itenberg and Shustin claimed that in their forthcoming papers \cite{is24-2,is24-3}, they will introduce tropical counterparts of the above refined invariants and compare these tropical counterparts with the Block-G\" ottsche refined tropical invariants.

In the definition of Gromov-Witten invariants, we also consider the Lambda class insertions which come from the Chern classes of the Hodge bundle over the moduli spaces of curves.
In his ground-breaking paper \cite{bou2019}, Bousseau showed, after the change of variables $q=e^{iu}$, a generating series of higher genus logarithmic Gromov-Witten invariants of a toric surface with a Lambda class insertion is equal to the Block-G\" ottsche refined tropical invariants. Bousseau's correspondence provided a geometric interpretation of the
Block-G\"ottsche tropical refined invariants and made their deformation invariance evident. 
Let $C$ be a smooth cubic in $\pb^2$. Bousseau \cite{bousseau-2023} proved a correspondence theorem between the higher genus maximal contact Gromov-Witten invariants of $(\pb^2,C)$ with a Lambda class insertion and the Betti numbers of the moduli spaces of one-dimensional Gieseker semistable sheaves on $\pb^2$, after the change of variables $q=e^{iu}$.
In particular, Bousseau \cite{bousseau-2023} obtained a tropical correspondence theorem for a generating series of higher genus 1-marked relative Gromov-Witten invariants with maximal tangency and a Lambda class insertion.
Kennedy-Hunt, Shafi, and Urundolil Kumaran \cite{ksu24} showed that, after the change of variables $q=e^{iu}$, a generating series of higher genus descendant logarithmic Gromov-Witten invariants with a Lambda class insertion in toric surfaces agrees with a $q$-refined count of rational tropical curves satisfying higher valency conditions, as defined by Blechman and Shustin in \cite{bs19}. 
Gr\"afnitz, Ruddat and Zaslow \cite{grz-2024} showed that the proper Landau–Ginzburg superpotential for a toric Fano surface $X$ with smooth anticanonical divisor is equal to the open mirror map for outer Aganagic–Vafa branes in the canonical bundle $K_X$.
Note that all the results discussed above are in the toric case.

The main goal of the present paper is to explore the correspondence between enumerative invariants and refined counts 
in the {\it non-toric} case.
One approach to study such a correspondence is to remain within the tropical setting. 
Nishinou \cite{nishinou20} obtained an algebraic-tropical correspondence theorem for abelian surfaces, 
analogous to Mikhalkin's correspondence for toric surfaces. 
Blomme studied the correspondence between the enumerative invariants of the abelian surfaces and the refined counts of tropical curves in \cite{blomme23,blomme24}, 
and computed these enumerative invariants using the tropical approach in \cite{blomme22-3}.
Let $(X,D)$ be a log Calabi-Yau pair consisting of a smooth projective surface and a singular reduced effective normal crossing anticanonical divisor $D$.
Bousseau \cite{bou-2024} showed that the $q$-refined 2-dimensional Kontsevich-Soibelman scattering diagrams compute a generating series of higher genus logarithmic Gromov–Witten invariants with maximal tangency
condition and insertion of the top Lambda class of $(X,D)$.
When $X$ is a smooth del Pezzo surface of degree $\geq3$ and $D$ is a smooth anticanonical divisor, 
Gr\"afnitz \cite{Graefnitz22} established a correspondence between genus zero logarithmic Gromov-Witten invariants of $X$ 
intersecting $D$ at a single point with maximal tangency and a weighted count of tropical curves arising as tropicalizations of stable log maps. 
This tropical correspondence is generalized to the case of the higher genus $2$-marked logarithmic Gromov-Witten invariants with a top Lambda class insertion of the log Calabi-Yau pair $(Y,E)$ in \cite{grafnitz-2025}, where $Y$ is a smooth del Pezzo surface and $E$ is a smooth anticanonical divisor. 
Recently, Gr\"afnitz, Ruddat, Zaslow and Zhou \cite{grzz-2025} proposed a $q$-refinement of the open mirror map which is defined by quantum periods of mirror curves for outer Aganagic-Vafa branes on the local Calabi-Yau $K_Y$.
Moreover, they showed that the $q$-refinement of the open mirror map determines a generating series of 2-marked higher genus logarithmic Gromov-Witten invariants with a Lambda class insertion of the log Calabi-Yau surface $(Y,E)$.

Another approach to consider the correspondence between enumerative invariants and the Block-G\" ottsche refined counts 
is to employ algebro-geometric methods. 
In the toric case, the enumeration of tropical curves can be described as weighted counts of floor diagrams introduced by Brugall\'e and Mikhalkin \cite{bm2007,bm2008}, 
after establishing a correspondence between tropical curves and floor diagrams.
In the refined case, Bousseau showed, after the change of variables $q=e^{iu}$, 
a generating series of higher genus relative Gromov-Witten invariants of a Hirzebruch surface or $\pb^2$ with a Lambda class insertion can be computed using the refined counts of floor diagrams \cite{bou2021}.
In order to compute Gromov-Witten invariants and Welschinger invariants of non-toric del Pezzo surfaces,
Brugall\'e proposed floor diagrams relative to a conic in \cite{bru2015}. 
By utilizing the degeneration formula in Gromov-Witten theory, Brugall\'e showed that higher genus Gromov-Witten invariants 
and Welschinger invariants of del Pezzo surfaces can be computed using floor diagrams relative to a conic. 
Recently, Arg\"uz and Bousseau \cite{ab25} generalized the genus zero Block-G\"ottsche polynomials of toric del Pezzo surfaces to arbitrary surface $S$ by using BPS polynomials of the 3-fold $S\times\pb^1$, that are Laurent polynomials in $q$ with integer coefficients.
By establishing a $q$-refined correspondence for refined counts of genus zero floor diagrams relative to a conic, they also showed that the evaluation of BPS polynomials at $q=-1$ yields Welschinger invariants of del Pezzo surfaces of degree $\geq3$.

In this paper, we 
provide a correspondence between a generating series of higher genus relative Gromov-Witten invariants with a Lambda class 
$\lambda_{g-g'}$ insertion in the blow-up of $\pb^2$ 
at $k$ points on a conic and the refined counts of {\it higher genus} floor diagrams relative to a conic, after the change of variables $q=e^{iu}$.

\subsection*{Main results}

The first main result is the following theorem.

\begin{theorem}[Theorem \ref{thm:1}]
\label{thm:1-c}
    For the blow-up of $\pb^2$ at $k$ points on a conic, $q$-refined counts $N_q^{\mu_1,\mu_2}(X_k',d,g')$ (see equation $(\ref{eq:2.2})$) of genus $g'$ floor diagrams relative to a conic are, after the change of variables $q=e^{iu}$, generating series of higher genus relative Gromov-Witten invariants with insertion of a Lambda class $\lambda_{g-g'}$.
\end{theorem}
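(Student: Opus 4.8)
The plan is to combine the degeneration formula in relative Gromov–Witten theory with the refined floor-diagram correspondence for toric surfaces, transporting the latter to the relative-to-a-conic setting by degenerating the conic into a pair of lines, in the spirit of Brugall\'e \cite{bru2015} and its refined incarnations in \cite{bou2021,ab25}. First I would degenerate $X_k'$ so that the conic $D$ specializes to a reducible (degenerate) conic, i.e.\ a union of two lines. This realizes the non-toric pair $(X_k',D)$ as the general fiber of a degeneration whose central fiber is built from toric (Hirzebruch-type) surfaces relative to their toric boundary. Applying the degeneration formula then expresses the relative Gromov–Witten invariants of $(X_k',D)$ as a finite sum of products of relative invariants of the toric pieces, the summation being indexed by the admissible ways of distributing the curve class $d$, the tangency profile along the degenerate conic (encoded by the pair of partitions $\mu_1,\mu_2$), and the genus among the components.

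The combinatorial index set of this sum should be precisely the set of genus $g'$ floor diagrams relative to a conic with boundary data $\mu_1,\mu_2$: each floor corresponds to a component carrying a relative invariant of a $\pb^1$-bundle piece, while each elevator of weight $w$ corresponds to a gluing node of contact order $w$ along an internal divisor. The heart of the argument is the treatment of the Lambda class. I would use the compatibility of the Hodge bundle with the normalization/gluing maps to split $\lambda_{g-g'}$ across the components of the central fiber: the top Lambda class forces the genus defect $g-g'$ to be localized onto the local contributions, while the remaining genus $g'$ survives as the first Betti number of the floor diagram. This is exactly the mechanism that turns the higher-genus invariants into a series in $u$.

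With the Lambda class so distributed, each local relative invariant becomes a generating series in $u$ whose value, after the substitution $q=e^{iu}$, is the corresponding refined multiplicity appearing in $N_q^{\mu_1,\mu_2}$ --- for an elevator of weight $w$ this is the quantum integer $\frac{q^{w/2}-q^{-w/2}}{q^{1/2}-q^{-1/2}}$ of Section \ref{sec:2.2}, and for a floor the appropriate product of such factors. The toric local computations are supplied by Bousseau's correspondence \cite{bou2021}; the genuinely new local inputs are the contributions attached to the components meeting the degenerate conic, which I would compute directly as relative invariants with $\lambda_{g-g'}$ insertion and match to the refined conic multiplicities. Reassembling, substituting the local evaluations into the degeneration formula turns the sum over gluing data into the refined weighted count over floor diagrams relative to a conic, yielding the claimed equality of generating series after $q=e^{iu}$.

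I expect the main obstacle to be precisely this Lambda-class bookkeeping together with the conic contributions: proving that the top Lambda class cleanly separates the tropical genus $g'$ from the local genus $g-g'$, and that the local relative invariants along the degenerate conic reproduce the combinatorially defined multiplicities in $N_q^{\mu_1,\mu_2}$ rather than some correction thereof. A secondary subtlety is verifying that no boundary contribution to $\lambda_{g-g'}$ is lost when passing to the tangency locus along the conic, and that the change of variables $q=e^{iu}$ is compatible term by term across all components, so that the product over floors and elevators assembles to the stated refined count.
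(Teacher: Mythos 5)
Your opening move --- degenerating the pair so that the conic breaks into a union of two lines --- is where the argument fails, and it is not the strategy of Brugall\'e \cite{bru2015} or of this paper. The degeneration formula of \cite{li2002,lr2001} applies to a family $\xl\to\ab^1$ whose central fiber is a normal crossings union of smooth surfaces glued along \emph{smooth} divisors, with the relative divisor staying smooth; it says nothing about a family in which the relative divisor itself acquires a node. Relative Gromov--Witten invariants are not invariant under such a specialization: the pair with smooth conic and the limiting pair with nodal divisor carry different (log) structures, and comparing their invariants is a genuinely hard separate problem, with corrections concentrated at the node --- it is not an instance of the degeneration formula, and no such comparison theorem (let alone one compatible with $\lambda_{g-g'}$ insertions) is supplied or cited. (For $k\geq5$ there is the additional point that the conic through the $k$ blown-up points is rigid, so one must degenerate the surface together with the divisor.) Consequently the sum of products of toric local invariants you want to write down is not available by the tools you invoke. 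Moreover, even granting such a comparison, the resulting combinatorics would be of Caporaso--Harris type, with tangency data split between two relative lines; it would not be indexed by Brugall\'e's floor diagrams relative to a conic, whose floors are by construction curves of class $[Z^+]+l[F]$ in the $\pb^1$-bundle $\nl=\pb(\ol_E\oplus N_{E|X_{k}'})$ over the \emph{smooth} conic. Your own description of the local pieces (``each floor corresponds to a component carrying a relative invariant of a $\pb^1$-bundle piece'') is the geometry of the degeneration to the normal cone of the smooth conic, not of the degeneration you propose; the proposal conflates the two.

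For contrast, the paper keeps $E$ smooth throughout: it iterates the degeneration to the normal cone of $E$, producing the central fiber $X_{k}'\cup\nl_n\cup\cdots\cup\nl_1$ with exactly one point insertion placed in each $\nl_i$ and none in $X_{k}'$; it then applies the degeneration formula there (Proposition \ref{prop:4.1}), kills all but the ``effective'' vertices by dimension counts and vanishing results (Remark \ref{prop:vanishing-X_k1}, Propositions \ref{prop:vanishing-X_k2} and \ref{prop:excep1}, and Bousseau's lemmas from \cite{bou2021}), handles multiple covers of the exceptional curves $[E_i]$ via Bryan--Pandharipande, and only then reads off the floor diagram (Construction \ref{const:1}) and the quantum-integer factors. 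Your Lambda-class mechanism is right in spirit --- the splitting across vertices via the gluing property of Hodge bundles, with $g'$ surviving as the first Betti number of the dual graph --- though note that $\lambda_{g-g'}$ is the top Lambda class only when $g'=0$, so the localization you describe must be argued through the dimension/vanishing statements rather than through top-class vanishing alone. In any case, correct bookkeeping of the Lambda class cannot rescue a degeneration to which the gluing formula does not apply.
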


Let $E$ be a smooth conic in $\pb^2$.
We use $X_k$ to denote the blow-up of $\pb^2$ at $k$ 
points in the general position
and we use $X_{k}'$ to denote the blow-up of $\pb^2$
at $k$ general points on a smooth conic $E$.
Since there is no confusion in the context,
we still use $E$ to denote the strict transform of $E$ in $X_{k}'$. 
The normal bundle of $E$ in $X_{k}'$ is denoted by $N_{E|X_{k}'}$.
We denote by $\nl$ the $\pb^1$-bundle $\pb(\ol_E\oplus N_{E|X_{k}'})$ over $E$.

Arg\"uz and Bousseau used genus zero floor diagrams relative to a conic to compute a generating series of higher genus relative Gromov-Witten invariants with a Lambda class insertion of the highest degree in $X_k'$ in \cite[Theorem 4.17]{ab25}.
The computation in \cite[Section 4]{ab25} is based on the degeneration of $X_k'$ into a union of $\pb^2$, a chain of $\fb_4$ and the blow-up of $\fb_4$ at $k$ points in a conic.
We use Brugall\'e's strategy to handle the refined case here.
When the surface $X_k'$ degenerates into a union $Y$ of $X_k'$ and a chain of $n$ copies of $\nl$, we put exactly one point in each copy of $\nl$.
From the degeneration formula, the higher genus relative Gromov-Witten invariants with $n$ point insertions in $X_k'$ 
are determined by the relative Gromov-Witten invariants of $X_k'$ with no point insertion and 
the relative Gromov-Witten invariants of $\nl$ with one point insertion. 
The relative Gromov-Witten invariants of $\nl$ with one point insertion were computed by Bousseau in \cite[Theorem 4.4]{bou2021}, and the relative Gromov-Witten invariants of $X_k'$ with no point insertion were computed by Arg\"uz and Bousseau in \cite[Theorem 4.17]{ab25}. 
After a combinatorial computation of the product of contributions of all floors and edges to the relative Gromov-Witten invariants of $X_k'$ with $n$ point insertions, 
we obtain the correspondence (see Theorem \ref{thm:1}) between a generating series of higher genus relative Gromov-Witten invariants with a Lambda class $\lambda_{g-g'}$ insertion in $X_k'$ and the refined counts of genus $g'$ floor diagrams relative to a conic, after the change of variables $q=e^{iu}$. 
When $g'=0$, our correspondence (Theorem \ref{thm:1}) recovers the correspondence result of Arg\"uz and Bousseau in \cite[Theorem 4.17]{ab25}.

As an application of the correspondence, we introduce a higher genus version of BPS polynomials that are Laurent polynomials in $q$ with integer coefficients, of del Pezzo surfaces of degree $\geq3$ and Hirzebruch surfaces. 

\begin{theorem}[Theorem \ref{thm:main1}]
\label{thm:2-c}
    Let $X$ be a del Pezzo surface of degree $\geq3$ or a Hirzebruch surface. Then, for every $g'\in\zb_{>0}$, the power series $P_{g',d}^X(u)$ in $u$ with rational coefficients is a Laurent polynomial $n_{g',d}^X(q)$, called genus $g'$ BPS polynomial of $X$, in $q$ with integer coefficients under change of variables $q=e^{iu}$, where $P_{g',d}^X(u)$ is defined in $(\ref{eq:g-bps-poly})$.
\end{theorem}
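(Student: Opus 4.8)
The plan is to read the statement off from the correspondence theorem (Theorem \ref{thm:1}) together with the explicit combinatorial shape of the refined floor diagram counts. The first step is to reduce every surface in the list to a setting where a floor diagram correspondence is available. The toric surfaces appearing---all Hirzebruch surfaces together with $\pb^2$ and $\pb^1\times\pb^1$---are already covered by Bousseau's refined floor diagram correspondence \cite{bou2021}, which computes the generating series of higher genus relative Gromov-Witten invariants with a $\lambda_{g-g'}$ insertion underlying $P_{g',d}^X(u)$. The remaining surfaces are the del Pezzo surfaces of degree $\geq3$ arising as blow-ups of $\pb^2$ at $k\leq6$ points in general position. For these I would invoke deformation invariance of Gromov-Witten invariants to move the $k$ points onto a smooth conic $E$, identifying the invariants of $X$ with those of $X_k'$: blowing up distinct points always yields a smooth surface, so $X$ and $X_k'$ lie in a single deformation family and their invariants coincide. (For $k\leq5$ the surface $X_k'$ is itself del Pezzo, since a smooth conic meets each line in at most two points and hence no three of the chosen points are collinear; for $k=6$ the surface $X_6'$ is only weakly del Pezzo but is still deformation equivalent to the cubic surface.) After this reduction Theorem \ref{thm:1} applies to $X_k'$.

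With the reduction in hand, the definition $(\ref{eq:g-bps-poly})$ of $P_{g',d}^X(u)$ becomes, after the substitution $q=e^{iu}$, the refined count $N_q^{\mu_1,\mu_2}(X_k',d,g')$ for the appropriate tangency profile $(\mu_1,\mu_2)$ (respectively the corresponding Bousseau count in the toric case). I would then feed this into the combinatorial formula $(\ref{eq:2.2})$: $N_q^{\mu_1,\mu_2}(X_k',d,g')$ is a finite sum, over the genus $g'$ floor diagrams relative to a conic of fixed degree $d$, of a product of floor and edge weights. By Bousseau's computation of the relative invariants of $\nl$ with one point insertion \cite[Theorem 4.4]{bou2021} and the computation of the invariants of $X_k'$ with no point insertion \cite[Theorem 4.17]{ab25}, each such weight is, after $q=e^{iu}$, a Laurent expression in $q^{1/2}$ built from quantum integers $[w]_q$. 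Since only finitely many floor diagrams of fixed $d$ and $g'$ exist, the sum is a Laurent polynomial, which is exactly the assertion that the power series $P_{g',d}^X(u)$ in $u$ collapses to a Laurent polynomial $n_{g',d}^X(q)$ in $q$.

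It remains to control the coefficients, and this is where I expect the real work to lie. Each quantum integer $[w]_q=q^{(w-1)/2}+q^{(w-3)/2}+\cdots+q^{-(w-1)/2}$ is a symmetric Laurent polynomial in $q^{1/2}$ with non-negative integer coefficients, but the individual floor and edge contributions may carry denominators, so integrality is a global property of the full sum rather than of its terms. Two things must be checked. First, that after summing over all diagrams the half-integer powers of $q^{1/2}$ cancel, so that $n_{g',d}^X(q)$ is a genuine Laurent polynomial in $q$; I would settle this by a parity computation expressing the total edge multiplicity of each diagram in terms of $d$ and $g'$, as in the toric analysis of \cite{bou2021}. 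Second, if the definition $(\ref{eq:g-bps-poly})$ incorporates a Gopakumar-Vafa type extraction of the genus $g'$ BPS content from the full Gromov-Witten series, one must verify that the multiple cover corrections, carrying factors $1/m$, recombine to leave integer coefficients; here I would run the plethystic inversion degree by degree and clear denominators using the divisibility $[mw]_q/[w]_q=q^{(m-1)w/2}+\cdots+q^{-(m-1)w/2}\in\zb[q^{\pm1/2}]$ of quantum integers, exactly as in the genus zero argument of Arg\"uz and Bousseau \cite{ab25}. Finally, independence of $n_{g',d}^X(q)$ from the auxiliary choices---the conic $E$, the positions of the points, and the admissible tangency profile---follows from the deformation invariance of the first step together with the intrinsic definition of the left-hand side of $(\ref{eq:g-bps-poly})$ via the Gromov-Witten theory of $X$.
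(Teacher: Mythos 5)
There is a genuine gap, and it sits exactly where the paper has to work hardest. Your plan identifies $P_{g',d}^X(u)$ with a refined floor-diagram count by ``deformation invariance $+$ Theorem \ref{thm:1}'', but these two ingredients do not compose: $P_{g',d}^X(u)$ is built from \emph{absolute} Gromov--Witten invariants $N^X_{g,d,n}(g-g')$, while Theorem \ref{thm:1} computes \emph{relative} invariants $N^{X_k'|E}_{g,d,n}(g-g',\mu_1,\mu_2)$. Deformation invariance only identifies absolute invariants of $X$ with absolute invariants of $X_k'$; it does not convert them into relative invariants of the pair $(X_k',E)$. That conversion requires a degeneration-formula argument, and this is precisely the content of the paper's Lemma \ref{lem:deg>3} (degeneration to the normal cone of $E$, with a dimension count \emph{and} an effectivity/connectedness argument forcing all components in $\nl$ to be fibers) for $k\leq5$, of Lemma \ref{lem:deg=3} for $k=6$, and of the analogous degeneration in Theorem \ref{thm:case1} for $\pb^2$ and $\fb_k$.

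Moreover, for the cubic surface the conversion is \emph{not} the identity you implicitly assume. In $X_6'$ the strict transform $E$ is a $(-2)$-curve, so in the degeneration the fibered side $\nl=\pb(\ol_E\oplus N_{E|X_6'})\cong\fb_2$ has effective classes such as $[F]+h[Z^-]$ which pass the dimension count and remain connected to the $X_6'$-side; equivalently, in the paper's degeneration $X_6\rightsquigarrow X_6'\cup(\pb^1\times\pb^1)$, ruling components of $\pb^1\times\pb^1$ contribute. The correct statement (Theorem \ref{thm:3}) is
$$
P_{g',d}^{X_6}(u)=\sum_{k\geq0}\binom{d\cdot[E]+2k}{k}\,N_{q}^{\emptyset,(1^{d\cdot [E]+2k})}(X_6',d-k[E],g'),
$$
a binomially weighted sum over shifted classes, not the single term $N_q^{\emptyset,(1^{d\cdot[E]})}(X_6',d,g')$ that your reduction would produce. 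So your argument, as written, proves the wrong identity in the degree-$3$ case and omits the needed lemma in all cases. By contrast, the difficulties you flag in your last paragraph are not where the problem lies: the definition $(\ref{eq:g-bps-poly})$ involves no Gopakumar--Vafa/plethystic extraction, and once the identification with refined counts (with $\mu_1=\emptyset$, so no $\tfrac{[w]_q}{w}$ denominators) is established, integrality of the coefficients and the absence of half-integer powers of $q$ are immediate from Definition \ref{def:2.4}.
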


Based on Theorem \ref{thm:2-c} and \cite[Lemma 3.3]{ab25}, it is interesting to ask whether, under change of variables $q=e^{iu}$, the power series $P_{g',d}^{X}(u)$ is a Laurent polynomial in $q$ with integer coefficients, not only for del Pezzo surfaces of degree $\geq3$ and Hirzebruch surfaces but also for other projective surfaces.

\begin{question}
    Let $X$ be a projective surface other than a del Pezzo surface of degree $\geq3$ and a Hirzebruch surface. 
    Let $g'\in\zb_{\geq0}$, and $d\in H_2(X;\zb)$ be a class such that $c_1(X)\cdot d+g'-1\geq0$. 
    Let $P_{g',d}^X(u)$ be the power series in $u$ with rational coefficients given in $(\ref{eq:g-bps-poly})$.
    Is $P_{g',d}^X(u)$ a Laurent polynomial in $q$ with integer coefficients under the change of variables $q=e^{iu}$ ?
\end{question}

Let $X$ be a toric del Pezzo surface or a Hirzebruch surface, we show that the Laurent polynomial $n_{g,d}^X(q)$ is equal to the genus $g$ Block-G\"ottsche polynomial $N_{g,d}^{X,\trop}(q)$ of $X$ (see Section \ref{sec:trop-ref-inv} for the definition of $N_{g,d}^{X,\trop}(q)$).

\begin{theorem}[Theorem \ref{thm:case1}, Theorem \ref{conj1}]\label{thm:3-c}
    Let $X$ be a toric del Pezzo surface or a Hirzebruch surface, then the genus $g$ BPS polynomial $n_{g,d}^X(q)$ of $X$ is equal to the genus $g$ Block-G\"ottsche polynomial $N_{g,d}^{X,\trop}(q)$ of $X$.    
\end{theorem}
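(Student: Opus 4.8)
The plan is to realize the BPS polynomial as a refined floor-diagram count and then to identify that count, in the toric case, with the Block-G\"ottsche tropical invariant. By Theorem~\ref{thm:2-c}, $n_{g,d}^X(q)$ is the closed form under $q=e^{iu}$ of the series $P_{g,d}^X(u)$ of higher-genus Gromov-Witten invariants of $X$ of class $d$ with a Lambda class insertion and $c_1(X)\cdot d+g-1$ point constraints, while $N_{g,d}^{X,\trop}(q)$ is the refined count of genus $g$ tropical curves of degree $d$ through a generic point configuration. It therefore suffices to show that, for toric $X$, the series $P_{g,d}^X(u)$ specializes to $N_{g,d}^{X,\trop}(q)$.

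I would first split according to the presentation of $X$, the two cases constituting Theorem~\ref{thm:case1} and Theorem~\ref{conj1}. For the toric del Pezzo surfaces realized as blow-ups $X_k'$ of $\pb^2$ at $k\leq 3$ general points on the conic $E$---that is, $\pb^2$, $\fb_1$, and the blow-ups at two and three points---Theorem~\ref{thm:1} expresses $P_{g,d}^X(u)$, after $q=e^{iu}$, as the refined count $N_q^{\mu_1,\mu_2}(X_k',d,g)$ of genus $g$ floor diagrams relative to a conic. For the Hirzebruch surfaces $\fb_n$, including $\fb_0=\pb^1\times\pb^1$, which are not blow-ups of $\pb^2$, I would instead invoke Bousseau's correspondence \cite{bou2021}, computing the same series as a refined count of standard floor diagrams. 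In either case $n_{g,d}^X(q)$ is exhibited as an explicit refined floor-diagram count.

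The core of the argument is to identify this floor-diagram count, for toric $X$, with the Block-G\"ottsche tropical count $N_{g,d}^{X,\trop}(q)$. The conceptual guarantee is Bousseau's toric correspondence \cite{bou2019}, which identifies $N_{g,d}^{X,\trop}(q)$ with a higher-genus logarithmic Gromov-Witten series carrying a Lambda class insertion; matching that series with $P_{g,d}^X(u)$---through the genus filtration, the Lambda class, the $c_1(X)\cdot d+g-1$ point constraints, and the class $d$---gives
\[
n_{g,d}^X(q)=P_{g,d}^X(e^{iu})=N_{g,d}^{X,\trop}(q).
\]

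The main obstacle I anticipate lies in the del Pezzo case, where the theory of this paper is relative to the smooth conic $E$, which is \emph{not} a component of the toric boundary, whereas Bousseau's correspondences are phrased relative to the toric boundary divisor. Bridging them requires showing that the conic-relative floor diagrams and the standard toric tropical curves compute the same Gromov-Witten invariants of $X$; I would establish this either by comparing the degeneration of $X_k'$ along $E$ with a toric degeneration, or by a refined-multiplicity-preserving bijection between the two families of diagrams. The Hirzebruch case should be comparatively routine, since there the floor diagrams of \cite{bou2021} already live in the toric setting, so only the standard refined floor-diagram/tropical-curve dictionary is required.
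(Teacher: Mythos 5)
Your proposal has a genuine gap, and it is precisely the one you flag yourself as the ``main obstacle'': the bridge between the invariants you can actually compute (the series $P_{g,d}^X(u)$ of absolute Gromov--Witten invariants, or its conic-relative floor-diagram expression) and the toric-boundary log Gromov--Witten series that Bousseau's correspondence \cite{bou2019} identifies with $N_{g,d}^{X,\trop}(q)$. Everything else in your plan is bookkeeping; this bridge is the entire mathematical content of the theorem, and you leave it at ``I would establish this either by comparing the degeneration of $X_k'$ along $E$ with a toric degeneration, or by a refined-multiplicity-preserving bijection.'' Neither alternative is carried out, and neither is routine: the conic $E$ is not a toric divisor, the conic-relative floor diagrams and the toric tropical curves come from inequivalent degenerations, and producing a refined-multiplicity-preserving dictionary between them is essentially equivalent to the statement being proved. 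There is also a secondary inaccuracy that compounds the problem: Theorem \ref{thm:1} concerns the \emph{relative} invariants $N_{g,d,n}^{X_k'|E}(g-g',\mu_1,\mu_2)$, not $P_{g,d}^X(u)$; passing from the absolute invariants in $P_{g,d}^X(u)$ to relative ones already requires a degeneration-plus-dimension-count argument (this is Lemma \ref{lem:deg>3} feeding into Theorem \ref{thm:2}), and the same absolute-to-relative step is silently skipped in your Hirzebruch case, since Bousseau's floor diagrams in \cite{bou2021} likewise compute relative, not absolute, invariants.

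For comparison, the paper closes this gap by never routing the toric cases through the conic at all. For $\pb^2$ and $\fb_k$ (Theorem \ref{thm:case1}) it degenerates $X$ to the normal cone of toric divisors ($L$, resp. $S_{-k}\cup S_k$); a dimension count shows the outer components contribute only fiber classes in genus zero, so the absolute invariants equal relative invariants along the toric boundary with trivial relative insertions, and then \cite[Theorems 5.12 and 7.1]{bou2021} together with \cite[Theorem 1]{bou2019} finish. For the remaining toric del Pezzo surfaces $X_2,X_3$ (Theorem \ref{conj1}) it uses a Nishinou--Siebert toric degeneration, the decomposition formula for log Gromov--Witten invariants \cite{acgs-2020}, a gluing formula, and the key dimension-counting Lemma \ref{lem:non-vanish} to prove $N_{g,d,n}^X(g-g')=N_{g,d,n,\log}^{(X,D)}(g-g')$, after which \cite[Theorem 1]{bou2019} applies. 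If you want to rescue your route for $X_2,X_3$, you would in effect have to re-prove exactly this identity between absolute invariants and toric-boundary log invariants, which is the argument your proposal is missing.
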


In Section \ref{subsec:higher-rel-bps}, we introduce the higher genus version of relative BPS polynomials $n_{g',d,(\mu_1,\mu_2)}^{X_k'|E}(q)$ (see Definition \ref{def:rel-bps-poly}) of $X_k'$, which are related to genus $g'$ BPS polynomials $n_{g',d}^{X_k}(q)$ of $X_k$ by Theorem \ref{thm:2} and Theorem \ref{thm:3}. 
As another application of the correspondence theorem (Theorem \ref{thm:1}), we prove that the genus $g'$ relative BPS polynomials $n_{g',d,(\mu_1,\mu_2)}^{X_k'|E}(q)$ are equal to the refined counts $N_q^{\mu_1,\mu_2}(X_k',d,g')$ of genus $g'$ floor diagrams relative to a conic up to a factor.

\begin{corollary}[Corollary \ref{cor:rel-corre}]
Let $g'\in\zb_{\geq0}$, and $d\in H_2(X_k';\zb)$ be a homology class such that $d\cdot[L]>0$, $d\cdot[E]\geq0$ and $n:=d\cdot[L]-1+g'+l(\mu_2)\geq0$. Suppose that $\vec\mu=(\vec\mu_1,\vec\mu_2)$ is an ordered partition of $d\cdot[E]$. Then,
we have the equality
$$
\begin{aligned}
\prod_{i=1}^{l(\mu_1)}\frac{[\mu_1^{(i)}]_q}{\mu_1^{(i)}}\cdot\prod_{i=1}^{l(\mu_2)}\frac{[\mu_2^{(i)}]_q}{\mu_2^{(i)}}\cdot n_{g',d,(\mu_1,\mu_2)}^{X_k'|E}(q)=N_{q}^{\mu_1,\mu_2}(X_k',d,g').
\end{aligned}
$$
\end{corollary}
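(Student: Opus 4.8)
The plan is to route both sides of the claimed identity through a single intermediate object, namely the generating series in $u$ of higher genus relative Gromov-Witten invariants of $(X_k',E)$ with tangency profile $(\mu_1,\mu_2)$ along $E$ and an insertion of the Lambda class $\lambda_{g-g'}$. Write $G(u)$ for this series. The correspondence theorem (Theorem \ref{thm:1}) already identifies $G(u)$, after the substitution $q=e^{iu}$, with the refined floor-diagram count $N_q^{\mu_1,\mu_2}(X_k',d,g')$, while the relative BPS polynomial $n_{g',d,(\mu_1,\mu_2)}^{X_k'|E}(q)$ is, by Definition \ref{def:rel-bps-poly}, extracted from the very same series by dividing out the multiplicity factors attached to the tangency conditions. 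The corollary is thus a matter of reconciling these two normalizations.

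First I would unwind Definition \ref{def:rel-bps-poly}. The relative BPS polynomial is defined so that, after $q=e^{iu}$,
$$
G(u)=\prod_{i=1}^{l(\mu_1)}\frac{[\mu_1^{(i)}]_q}{\mu_1^{(i)}}\cdot\prod_{i=1}^{l(\mu_2)}\frac{[\mu_2^{(i)}]_q}{\mu_2^{(i)}}\cdot n_{g',d,(\mu_1,\mu_2)}^{X_k'|E}(q),
$$
where each quantum integer $[\mu_j^{(i)}]_q$ records the refined weight of a tangency of order $\mu_j^{(i)}$ with $E$ and each integer $\mu_j^{(i)}$ in the denominator comes from the normalization of the associated relative insertion. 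Making this bookkeeping explicit is the first step.

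Next I would invoke the correspondence theorem. By Theorem \ref{thm:1}, after the change of variables $q=e^{iu}$ the series $G(u)$ equals $N_q^{\mu_1,\mu_2}(X_k',d,g')$. Substituting this equality into the previous display yields exactly the asserted identity, with the stated product of factors $\prod_i [\mu_1^{(i)}]_q/\mu_1^{(i)}$ and $\prod_i [\mu_2^{(i)}]_q/\mu_2^{(i)}$.

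The main obstacle will be matching the two independent sources of multiplicity factors. On the floor-diagram side the quantum integers $[\mu_j^{(i)}]_q$ arise as the refined weights of the edges of the floor diagram meeting $E$, whereas on the Gromov-Witten side the integers $\mu_j^{(i)}$ enter through the choice of relative point constraints and the evaluation maps at the marked points on $E$. I would need to check that these are precisely the factors recorded in Definition \ref{def:rel-bps-poly}, with no residual genus-dependent or combinatorial correction surviving the passage through Theorem \ref{thm:1}. In other words, the compatibility between the normalization used to define $n_{g',d,(\mu_1,\mu_2)}^{X_k'|E}(q)$ and the normalization built into $N_q^{\mu_1,\mu_2}(X_k',d,g')$ must be verified term by term, which is where the only real content of the proof lies.
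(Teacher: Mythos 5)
Your strategy is the paper's own: express the generating series of relative Gromov--Witten invariants once through Definitions \ref{def:rel-bps-inv} and \ref{def:rel-bps-poly}, once through Theorem \ref{thm:1}, and equate. The conclusion does follow this way, but both of your intermediate displays are false as written, and by exactly the same factor. Taking $G(u)=\sum_{g\geq g'}N_{g,d,n}^{X_k'|E}(g-g',\mu_1,\mu_2)u^{2g-2+d\cdot[L]+l(\mu_1)+l(\mu_2)}$, unwinding Definition \ref{def:rel-bps-inv} together with Definition \ref{def:rel-bps-poly} and using $2\sin(mu/2)=(-i)(q^{\frac{1}{2}}-q^{-\frac{1}{2}})[m]_q$ gives not your first display but
$G(u)=\prod_{i}\frac{[\mu_1^{(i)}]_q}{\mu_1^{(i)}}\prod_{i}\frac{[\mu_2^{(i)}]_q}{\mu_2^{(i)}}\, n_{g',d,(\mu_1,\mu_2)}^{X_k'|E}(q)\left((-i)(q^{\frac{1}{2}}-q^{-\frac{1}{2}})\right)^{d\cdot[L]+2g'-2+l(\mu_1)+l(\mu_2)}$;
likewise Theorem \ref{thm:1}, after multiplying through by $u^{d\cdot[L]}$, gives not your second display but
$G(u)=N_q^{\mu_1,\mu_2}(X_k',d,g')\left((-i)(q^{\frac{1}{2}}-q^{-\frac{1}{2}})\right)^{n+g'-1+l(\mu_1)}$.
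The argument closes only because of the exponent identity $n+g'-1+l(\mu_1)=d\cdot[L]+2g'-2+l(\mu_1)+l(\mu_2)$, immediate from $n=d\cdot[L]-1+g'+l(\mu_2)$, which makes the two spurious factors equal so that they cancel upon equating the two expressions for $G(u)$. This is precisely the ``bookkeeping'' you deferred, and it is genus- and degree-dependent, contrary to your hope that no such residual correction appears; it simply appears symmetrically on both sides. Once you make this explicit, your argument becomes literally the paper's proof, so the plan is sound but the two displayed identities need the correction above.
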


Floor diagrams provide us with a combinatorial depiction of curve degeneration. 
One can directly compute the refined counts of floor diagrams relative to a conic from their definition. 
An alternative approach to computing relative enumerative invariants is the Caporaso-Harris type recursive formula,
which has been first applied by Caporaso and Harris in the case of $\pb^2$ with a line \cite{ch1998}.
In the unrefined case, a Caporaso-Harris type recursive formula for relative Gromov-Witten invariants 
in the case of $X_6'$ together with the strict transform of a conic was provided by Vakil in \cite{v2000}.
Shoval and Shustin \cite{ss2013} generalized this to the case of the blow-up of $k$ generic points in $E$ 
and one point in $X\setminus E$. 
In the tropical setting, Gathmann and Markwig proposed a tropical version of Caporaso-Harris formula in \cite{gm2007a,gm2007b}. 
Itenberg, Kharlamov and Shustin applied the ideas of Gathmann and Markwig to 
obtain a Caporaso-Harris formula for tropical Welschinger invariants in \cite{iks2009}. 
Block and G\" ottsche extended the argument of Gathmann and Markwig to the refined case, 
obtaining a Caporaso-Harris type recursive formula for the
tropical refined invariants in \cite{bg2016}.

The second main result of this paper is a Caporaso-Harris type recursive formula for refined counts of floor diagrams relative to a conic.
\begin{theorem}[Theorem \ref{thm:CH}]\label{thm:CH-c}
    The refined counts of floor diagrams relative to a conic satisfy a Caporaso-Harris type recursive formula.
\end{theorem}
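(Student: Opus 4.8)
The plan is to establish the recursion by the classical Caporaso--Harris device of singling out the ``last'' constraint and analysing how it may be removed, transposed into the combinatorics of refined floor diagrams relative to the conic $E$. I would first recall that $N_q^{\mu_1,\mu_2}(X_k',d,g')$ is, by its definition in $(\ref{eq:2.2})$, a sum over marked genus $g'$ floor diagrams relative to $E$, each weighted by the product of its refined local contributions: a quantum integer $[w]_q$ for every elevator of weight $w$ together with the refined floor multiplicities. Here $\mu_1$ records the elevators terminating on $E$ at assigned positions (the fixed tangencies) and $\mu_2$ those terminating on $E$ at free positions. I would then fix a total order on the point conditions refining the partial order carried by the floor diagram, so that there is a well-defined topmost constrained floor $F$.

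The recursion is obtained by examining $F$ together with the elevators incident to it from below, which splits into two cases mirroring Caporaso--Harris exactly. In the first case an elevator of weight $k$ descending from $F$ terminates on $E$ at a free position; deleting $F$ and its point condition produces a floor diagram of the same genus and class in which the part $k$ has been transferred from $\mu_2$ to $\mu_1$, so that a free tangency becomes an assigned one. In the second case $F$ is joined only to lower floors; deleting $F$ and its point lowers the relevant class, cuts the incident elevators, and redistributes their weights into a fresh collection of ends, which amounts to enlarging $\mu_2$ while decreasing the degree. Summing over all admissible local configurations and over the two cases yields the recursive expression for $N_q^{\mu_1,\mu_2}(X_k',d,g')$ in terms of the refined counts attached to the truncated diagrams.

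The analytic heart of the argument is to show that the refined weight of a diagram factors cleanly through this truncation. One must verify that the product of quantum integers and floor multiplicities of the full diagram equals the corresponding product for the truncated diagram times the explicit local contribution of $F$: a factor of $\tfrac{[k]_q}{k}$-type for a transferred end, and otherwise a sum of products of quantum integers $[w]_q$ over the ways to split the incoming weight across the outgoing elevators, with the combinatorial multiplicities given by the number of orderings of the parts of $\mu_1$ and $\mu_2$ compatible with the chosen marking. The normalization $\prod_i \tfrac{[\mu^{(i)}]_q}{\mu^{(i)}}$ appearing in Corollary \ref{cor:rel-corre} is precisely what makes these local factors match on the nose, and I expect it to cancel consistently between the two sides of the recursion.

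The main obstacle is the simultaneous control of the genus and the refined weights under this cut-and-reglue operation. Removing $F$ can leave the genus unchanged, lower it, or sever a cycle, and one must exhibit a weight-preserving bijection between the genus $g'$ floor diagrams of class $d$ and the diagrams produced by the two cases that respects this genus bookkeeping. The delicate point is that the refined vertex and edge contributions must reorganize into exactly the quantum-integer and binomial factors of the formula across all three genus behaviours at once; establishing this compatibility, rather than any single computation, is where the real work lies, and it is handled by tracking the effect of each local move on both the genus and the product of $q$-weights simultaneously.
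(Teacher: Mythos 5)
Your proposal follows the combinatorial route that the paper explicitly declines to take: the paper's proof of Theorem \ref{thm:CH} involves no surgery on floor diagrams at all. It first derives a Caporaso--Harris recursion for the relative Gromov--Witten invariants $N^{X_k'|E}_{g,d,n}(g-g',\mu_1,\mu_2)$ by applying the degeneration formula to the degeneration of $X_k'$ into $X_k'\cup\nl$ with exactly one point constraint placed in $\nl$ (Lemma \ref{lem:7.1}), and then converts that recursion into (\ref{eq:CH-2}) using the correspondence theorem (Theorem \ref{thm:1}). A direct diagrammatic proof in the spirit of Gathmann--Markwig and Block--G\"ottsche is plausible in principle and would be a genuinely different, more elementary argument, but your version of it has gaps that are not merely expository.

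Concretely: (i) your Case 1 is self-contradictory, since deleting the floor $F$ cannot produce a diagram of ``the same genus and class'' --- deleting a floor of degree $1$ or $2$ lowers the degree. In the correct dichotomy, the first term of (\ref{eq:CH-2}) involves no floor deletion at all: the distinguished point condition lies on an edge of $\eg^\infty(\dl)$ carrying a free tangency, and the move is purely a change of marking (a free tangency becomes a fixed one), which is why $d$ and $g'$ are unchanged and the only new factor is the weight $w$. (ii) In your Case 2 the truncation is treated as producing a single smaller diagram, but removing the degree-$2$ floor adjacent to the conic generically \emph{disconnects} the diagram into $m$ pieces; this is the source of the product $\prod_{j=1}^m N_q^{\mu_1^j,\mu_2^j}(X_k',d_j,g_j)$ in (\ref{eq:CH-2}), of the multinomial coefficients distributing the point conditions and the parts of $\mu_1,\mu_2$ among the pieces, of the symmetry factor $1/\sigma$, of the factor $\prod_{y\in\eta}\frac{[y]_q}{y}$ for the fixed tangencies absorbed by the deleted floor, and of the factors $[x]_q$ attached to the cut elevators $\xi$. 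None of this structure appears in your plan, and a recursion built only from connected single truncations will not reproduce (\ref{eq:CH-2}). (iii) The genus bookkeeping, which you yourself identify as the heart of the argument, is only asserted (``handled by tracking''), whereas what must actually be proved is that the weight factorization is compatible with the constraint that the genus of the original diagram equals the sum of the genera of the pieces plus $\sum_j\bigl(l(\xi^j)-1\bigr)$, the number of independent cycles through the deleted floor. The paper never has to face this point, because the degeneration formula together with the splitting of Lambda classes (\cite[Proposition 3.1]{bou2021}) performs this bookkeeping automatically on the Gromov--Witten side.
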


Theorem \ref{thm:1} enables us to prove Theorem \ref{thm:CH-c} using the degeneration formula in Gromov-Witten theory, 
rather than analyzing the degeneration properties of floor diagrams. 
The Caporaso-Harris formula in the case of $\pb^2$ with a line was recovered using the degeneration formula in \cite{li2004,ip1998}.
We use the degeneration formula to get a Caporaso-Harris type recursive formula for relative
Gromov-Witten invariants with a Lambda class insertion first (see Lemma \ref{lem:7.1}). 
Then we obtain Theorem \ref{thm:CH-c} applying Theorem \ref{thm:1}.

\subsection*{Relation with previous results}
The contribution of this paper is two-fold. 
First, we obtain a $q$-refined correspondence for refined counts of higher genus floor diagrams and higher genus relative Gromov-Witten invariants in the non-toric case. Moreover, this theorem provides a higher genus version of BPS polynomials, generalizing the higher genus Block-G\"ottsche polynomials to the non-toric case. Second,
we provide a Caporaso-Harris type recursive formula for the refined counts of higher genus floor diagrams.

By using the degeneration formula, we reduce many computations of relative Gromov-Witten invariants of $X_k'$ 
to those on the Hirzebruch surface $\nl$.
Consequently, we use \cite{ab25,bou2021} extensively. 
In \cite{ab25}, Arg\"uz and Bousseau obtained a correspondence between refined counts of genus zero floor diagrams and higher genus relative Gromov-Witten invariants with a Lambda class of highest degree. Our correspondence theorem is a higher genus version of the correspondence established by Arg\"uz and Bousseau in \cite{ab25}. To prove the correspondence (Theorem \ref{thm:1}) we use the degeneration of $X_k'$ used by Brugall\'e, which is different from the strategy in \cite{ab25}.


\subsection*{Organization of the paper}
In Section \ref{sec:2}, we recall the definition of Brugall\'e's floor diagrams relative to a conic and the definition of tropical refined invariants. 
In the next section, we prove our first main result, that is the correspondence between the refined counts of higher genus floor diagrams and the higher genus relative Gromov-Witten invariants with a Lambda class $\lambda_{g-g'}$ insertion.
In Section \ref{sec:5}, we introduce the higher genus (relative) BPS polynomials of the del Pezzo surfaces of degree $\geq3$ and the Hirzebruch surfaces, then we show that the higher genus BPS polynomials are equal to the higher genus Block-G\"ottsche polynomials of toric del Pezzo surfaces and Hirzebruch surfaces.
Our second main result, the Caporaso-Harris type formula for refined counts, is given in Section \ref{sec:6}.
We conclude this paper with concrete computations of the refined counts of floor diagrams via two methods in the Appendix \ref{sec:appendix}.

\subsection*{Notations}
In this paper, we use the following notation.

The homology class in $H_2(X;\zb)$ realized by a divisor
$D$ in a surface $X$ is denoted by $[D]$.
Let $Z^+=\pb(\{0\}\oplus N_{E|X_{k}'})$ and
$Z^-=\pb(\ol_E\oplus\{0\})$ denote
the two distinguished sections of $\nl$.
Note that in the Hirzebruch surface $\nl$,
$[Z^-]^2=-[Z^+]^2=4-k$.
The exceptional divisors of the blow-ups $X_k$ or $X_{k}'$ 
at the $k$ points are denoted by $E_1,\ldots,E_k$.
We use $L$ to denote the strict transform of a general line in $X_{k}'$ or $X_k$.

Let $\mu=(\mu^{(1)},\ldots,\mu^{(n)})$, $\mu'=(\mu^{(1)},\ldots,\mu^{(m)})$ and $\lambda=(\lambda^{(1)},\ldots,\lambda^{(k)})$ be three ordered partitions, where $m\leq n$.
We put 
\begin{align*}
|\mu|=\sum_{i=1}^n\mu^{(i)}&,~ l(\mu)=n,~
\mu\setminus\mu'=(\mu^{(m+1)},\ldots,\mu^{(n)}),\\
\mu\cup\lambda&=(\mu,\lambda)=(\mu^{(1)},\ldots,\mu^{(n)},\lambda^{(1)},\ldots,\lambda^{(k)}).
\end{align*}
We denote by $i(\mu)$ the number of entries in $\mu$ that are equal to $i$.
We use $\lambda\subset\mu$ to denote that the entries in $\lambda$ are also the entries in $\mu$.
Let $a_1,\ldots,a_k, n\in\zb_{\geq0}$ be integers with $\sum_{i=1}^ka_i\leq n$.
Let $\mu_1,\ldots,\mu_k,\mu$ be a sequence of partitions such that $\cup_{i=1}^k\mu_i\subset\mu$.
We put
$$
\begin{aligned}
\binom{n}{a_1,\ldots,a_k}:=\frac{n!}{a_1!\cdots a_k!(n-a_1-\cdots -a_k)!},~
\binom{\mu}{\mu_1,\ldots,\mu_k}:=\prod_{i}\binom{i(\mu)}{i(\mu_1),\ldots,i(\mu_k)}.
\end{aligned}
$$


Let $\Gamma$ be a finite graph. We denote by $\vt(\Gamma)$ and $\eg(\Gamma)$ the sets of vertices and edges of $\Gamma$, respectively. A vertex $v\in\vt(\Gamma)$ is called a \textit{leaf} of $\Gamma$ if $v$ is adjacent to only one edge.
Vertices that are not leaves are called \textit{inner vertices}.
An \textit{end} of $\Gamma$ is an edge that is adjacent to a leaf. Edges of $\Gamma$ which are not ends are called \textit{inner edges} of $\Gamma$.
Given an oriented graph $\Gamma$, it is said \textit{acyclic} if there is no non-trivial oriented cycle in $\Gamma$.
Let $e$ be an edge of the oriented graph $\Gamma$ with two endpoints $v$ and $v'$. If the orientation on $e$ is pointing from $v$ to $v'$, the edge $e$ is called an \textit{outgoing} edge of $v$ and an \textit{incoming} edge of $v'$.
A graph $\Gamma$ is said to be \textit{weighted}, if it is equipped with an integer valued function $w:\eg(\Gamma)\to\zb_{>0}$. For an oriented weighted graph $\Gamma$, we denote by $\vt^\infty(\Gamma)\subset \vt(\Gamma)$ the set consisting of leaves whose adjacent edges are all outgoing edges.
Let $\eg^\infty(\Gamma)$ denote the set of edges adjacent to the vertices in $\vt^\infty(\Gamma)$.

\section{Floor diagrams and tropical refined invariants}
\label{sec:2}

\subsection{Floor diagrams relative to a conic}
We review the floor diagrams relative to a conic from \cite[Section 3]{bru2015}.
Note that we use partitions rather than vectors to represent tangency orders.
For more details on (refined) floor diagrams, we refer the readers to \cite{bm2007,bm2008,bru2015,bou2021,bg2016}.

Let $\Gamma$ be an oriented weighted graph.
For a vertex $v\in\vt(\Gamma)$, the \textit{divergence} $\dv(v)$ at $v$ is the sum of the weights of all incoming edges of $v$ minus the sum of the weights of all outgoing edges of $v$.

\begin{definition}[{\cite[Definition 3.1]{bru2015}}]
\label{def:2.1}
A \textit{floor diagram $\dl$ of genus $g$ and degree $d_\dl$} is a connected weighted oriented graph $\dl$ satisfying the following conditions.
\begin{enumerate}
    \item The oriented graph $\dl$ is acyclic.
    \item The first Betti number $b_1(\dl)$ of $\dl$ is equal to $g$.
    \item For any vertex $v\in\vt(\dl)\setminus\vt^\infty(\dl)$, the divergence $\dv(v)=2$ or $4$. And $\dv(v)\leq-1$ for any $v\in\vt^\infty(\dl)$.
    \item If $\dv(v)=2$, the vertex $v\in\vt(\dl)\setminus\vt^\infty(\dl)$ is a \textit{sink}, that is, a vertex whose all adjacent edges are oriented toward itself.
    \item The sum of divergences of vertices in $\vt^\infty(\dl)$ is $-2d_\dl$.
\end{enumerate}
A vertex $v\in\vt(\dl)\setminus\vt^\infty(\dl)$ is called \textit{a floor of degree $\frac{\dv(v)}{2}$}.
\end{definition}

\begin{remark}
\label{rmk:2.1}
In \cite[Section 3.1]{bru2015}, Brugall\'e pointed out that the objects defined in \cite[Definition 3.1]{bru2015} should be called \textit{floor diagrams in $\cb P^2$ relative to a conic}. 
\end{remark}

Note that the orientation of a floor diagram $\Gamma$ induces a partial ordering on the graph $\Gamma$. Let $m$ be a map between two partially ordered sets. If $m(i)>m(j)$ implies $i>j$, the map $m$ is said to be \textit{increasing}.

\begin{definition}[{\cite[Definition 3.3]{bru2015}}]
\label{def:2.2}
Let $k$, $g$ be two non-negative integers, and $d\in H_2(X_{k}';\zb)$. 
Fix an ordered partition $\vec\mu=(\vec\mu_1,\vec\mu_2)$ of $d\cdot[E]$, 
and suppose that $\vec\mu_1=(\mu_1^{(1)},\ldots,\mu_1^{(l(\mu_1))})$ and $\vec\mu_2=(\mu_2^{(1)},\ldots,\mu_2^{(l(\mu_2))})$.
Choose $k+1$ disjoint sets $A_0,A_1,\ldots,A_k$ such that $|A_i|=d\cdot[E_i]$ for $i=1,\ldots,k$, and
$$
A_0=\{1,\ldots,d\cdot[L]-1+g+l(\mu_1)+l(\mu_2)\}.
$$
A map $m:\cup_{i=0}^kA_i\to\dl$ is called a \textit{$d$-marking of type $(\vec\mu_1,\vec\mu_2)$} of a floor diagram $\dl$ of genus $g$ and degree $d\cdot[L]$ if the following conditions hold.
\begin{enumerate}
    \item The map $m$ is injective and increasing, and no floor of degree $1$ of $\dl$ is contained in $m(\cup_{i=0}^kA_i)$.
    \item For any vertex $v\in\vt^\infty(\dl)$ adjacent to the edge $e\in\eg^\infty(\dl)$, exactly one element of the set $\{v, e\}$ is in the image of $m$.
    \item $m(\cup_{i=1}^kA_i)\subset\vt^\infty(\dl)$ and $m(\{1,\ldots,l(\mu_1)\})=m(A_0)\cap\vt^\infty(\dl)$.
    \item For any $i\in\{1,\ldots,k\}$, a floor of $\dl$ is adjacent to at most one edge which is adjacent to a vertex in $m(A_i)$.
    \item For any $i\in\{1,\ldots,l(\mu_1)\}$, the edge adjacent to $m(i)$ is of weight $\mu_1^{(i)}$.
    \item Exactly $l(\mu_2)$ edges in $\eg^\infty(\dl)$ are in the image of $m|_{A_0}$, and the weights of these $l(\mu_2)$ edges are $\mu_2^{(1)},\ldots,\mu_2^{(l(\mu_2))}$, respectively.
\end{enumerate}
\end{definition}

A floor diagram $\dl$ equipped with a $d$-marking $m$ is called a \textit{$d$-marked floor diagram}, and $\dl$ is also said to be marked by $m$.
Suppose that $m: A_0\bigcup(\cup_{i=1}^kA_i)\to\dl$ and $\bar m: A_0\bigcup(\cup_{i=1}^k\bar A_i)\to\dl$ are two $d$-markings of the floor diagram $\dl$. The two markings $m$ and $\bar m$ are called \textit{equivalent}, if there is an isomorphism of weighted oriented graphs $\varphi:\dl\to\dl$ and a bijection $\psi:A_0\bigcup(\cup_{i=1}^kA_i)\to A_0\bigcup(\cup_{i=1}^k\bar A_i)$ such that $\psi|_{A_0}=\id$, $\bar m\circ\psi=\varphi\circ m$, and $\psi|_{A_i}:A_i\to\bar A_i$ is a bijection for $i\in\{1,\ldots,k\}$.  In the following of this paper, we always consider marked floor diagrams up to equivalence.

\begin{remark}
    We only consider $d$-marked floor diagrams of positive degree, so the class $d\neq l[E_i]$ for any $i=1,\ldots,k$ and any $l>0$. 
\end{remark}

\subsection{$q$-derivative and quantum numbers}\label{sec:2.2}
In this section, we recall the definition of the $q$-derivative from \cite{kc-2002} and quantum numbers. 

\begin{definition}
    Let $f(x)$ be an arbitrary function. The $q$-derivative of the function $f(x)$ is the expression:
    $$
    D_qf(x)=\frac{f(q^{\frac{1}{2}}x)-f(q^{-\frac{1}{2}}x)}{q^{\frac{1}{2}}x-q^{-\frac{1}{2}}x}.
    $$
\end{definition}
Note that
$$
\lim_{q\to1}D_qf(x)=\frac{df(x)}{dx},
$$
if $f(x)$ is differentiable.

\begin{example}
    Let $f(x)=x^n$, where $n$ is a positive integer. By definition,
    $$
    \begin{aligned}
        D_qf(x)&=\frac{q^{\frac{n}{2}}-q^{-\frac{n}{2}}}{q^{\frac{1}{2}}-q^{-\frac{1}{2}}}x^{n-1}.
    \end{aligned}
    $$
\end{example}

The coefficient $\frac{q^{\frac{n}{2}}-q^{-\frac{n}{2}}}{q^{\frac{1}{2}}-q^{-\frac{1}{2}}}$ is known as the quantum integer.
For any $n\in\zb_{\geq0}$, the \textit{quantum integer $[n]_q$} is defined as
$$
[n]_q:=\frac{q^{\frac{n}{2}}-q^{-\frac{n}{2}}}{q^{\frac{1}{2}}-q^{-\frac{1}{2}}}
=q^{-\frac{n-1}{2}}+q^{-\frac{n-3}{2}}+\cdots+q^{\frac{n-1}{2}}.
$$
The quantum integer $[n]_q$ is a Laurent polynomial in a formal variable $q^\frac{1}{2}$. 
In the case $q=1$, the $q$-integer $[n]_q=n$. Assume $q=-1$, 
then the $q$-integer $[n]_q^2=\left\{\begin{aligned}
    &0, \text{ if $n$ is even,}\\
    &1,\text{ if $n$ is odd.}
\end{aligned}\right.$


\subsection{Tropical refined invariants}\label{sec:trop-ref-inv}
We recall the definition of refined tropical invariants in this subsection from \cite[Section 2.3]{ab25}. We refer the readers to \cite{bg2016,im13} for more details.

Let $X$ be a projective toric surface. Let $\rho_1,\ldots,\rho_j$ be the rays of the fan of $X$ in $\rb^2$ with integral primitive directions $m_1,\ldots,m_j\in\zb^2$. 
Denote by $D_1,\ldots,D_j$ the toric divisors in $X$ corresponding to $\rho_1,\ldots,\rho_j$, respectively.
Let $d\in H_2(X;\zb)$ be a curve class such that $d\cdot[D_i]\geq0$ for any $i\in\{1,\ldots,j\}$.
On the toric surface $X$, the balancing condition $\sum_{i=1}^j(d\cdot[D_i])m_i=0$ is true. There are at least two toric divisors $D_i$ such that $d\cdot[D_i]\geq1$, so we get $c_1(X)\cdot d\geq2$.
Fix an integer $g\in\zb_{\geq0}$. Let $\pl$ be a set of $c_1(X)\cdot d+g-1$ points in $\rb^2$. Let $\cl_{d,\pl}$ denote the set of parameterized genus $g$ tropical curves $h:\Gamma\to\rb^2$ with $d\cdot[D_i]$ unbounded weighted 1 edges of direction $m_i$ for all $1\leq i\leq j$ and passing through $\pl$.
If $\pl$ is generic, the set $\cl_{d,\pl}$ is finite (see \cite[Proposition 4.13]{mikhalkin05}). 
Moreover, the domain graph $\Gamma$ of every parameterized tropical curve $h:\Gamma\to\rb^2$ in $\cl_{d,\pl}$ is 3-valent.
Let $e_1,e_2,e_3$ be the three edges of $\Gamma$ incident to a vertex $v$.
The multiplicity of the vertex $v\in\Gamma$ is defined as
$$
\mult_v:=|\det(w_{e_1}u_{e_1},w_{e_2}u_{e_2})|.
$$
Here, $w_{e_1},w_{e_2},w_{e_3}$ are weights of $e_1,e_2,e_3$, respectively, and $u_{e_1},u_{e_2},u_{e_3}$ are the integral primitive direction vectors pointing outward from $v$ of $e_1,e_2,e_3$, respectively.
The balancing condition $\sum_{i=1}^{3}w_{e_i}u_{e_i}=0$ implies that $\mult_v$ does not depend on the choice of $e_1,e_2$.
Following \cite{bg2016}, the Block-G\"ottsche polynomial is defined as
\begin{equation}\label{eq:trop-ref-inv}
N_{g,d}^{X,\trop}(q):=\sum_{h\in\cl_{d,\pl}}\prod_{v\in\Gamma}[\mult_v]_q\in\zb[q^{\pm\frac{1}{2}}].
\end{equation}
When the configuration $\pl$ is generic, from \cite[Theorem 1]{im13},  the polynomial $N_{g,d}^{X,\trop}(q)$ does not depend on $\pl$.

\subsection{Refined counts of floor diagrams}
In this subsection, we first recall the definitions of the complex multiplicities of a marked floor diagram, as described in \cite[Section 3]{bru2015}. 
Then, we recall the Block-G\"ottsche refined multiplicity of a floor diagram relative to a conic in \cite[Section 4]{ab25}. 

\begin{definition}[{\cite[Definition 3.5]{bru2015}}]
\label{def:2.3}
The \textit{complex multiplicity} of a marked floor diagram $(\dl,m)$ of type $(\vec\mu_1,\vec\mu_2)$ is defined as
$$
\mult^\cb(\dl,m)=\prod_{i=1}^{l(\mu_2)}\mu_2^{(i)}\prod_{e\in\eg^\circ(\dl)}w(e)^2,
$$
where $\eg^\circ(\dl)=\eg(\dl)\setminus\eg^\infty(\dl)$, $\mu_2=(\mu_2^{(1)},\ldots,\mu_2^{(l(\mu_2))})$.
\end{definition}
The complex multiplicity of a marked floor diagram only depends on its type and the underlying floor diagram. The count with complex multiplicity of $d$-marked floor diagrams of genus $g$ and type $(\vec\mu_1,\vec\mu_2)$ is given as follows.
\begin{equation}
\label{eq:2.1}
N_{\cb}^{\vec\mu_1,\vec\mu_2}(X_{k}',d,g)=\sum_{[\dl,m]}\mult^\cb(\dl,m),
\end{equation}
where the sum is taken over the equivalence classes of $d$-marked floor diagrams of genus $g$ and type $(\vec\mu_1,\vec\mu_2)$.

We recall the refined version of the multiplicity of a $d$-marked floor diagram in the sense of Block and G\"ottsche \cite{bg2016}.
We use the following notations.
\begin{itemize}
    \item $\eg^{\vec\mu_1}(\dl)$ is the set of edges adjacent to $m(\{1,\ldots,l(\mu_1)\})$.
    \item $\eg^{\vec\mu_2}(\dl)$ is the set of $l(\mu_2)$ edges in $\eg^\infty(\dl)$ given in the item $(6)$ in Definition \ref{def:2.2}.
\end{itemize}

\begin{definition}
\label{def:2.4}
The \textit{$q$-refined multiplicity} of a marked floor diagram $(\dl,m)$ of type $(\vec\mu_1,\vec\mu_2)$ is defined as
\begin{equation}
    \begin{aligned}
        \mult_q(\dl,m)=&\prod_{e\in\eg^{\vec\mu_2}}\frac{[w(e)]_q}{w(e)}\cdot\prod_{e\in\eg^{\vec\mu_1}}\frac{[w(e)]_q}{w(e)}\cdot\prod_{e\in\eg^{\vec\mu_2}}w(e)
        \cdot\prod_{e\in\eg^\circ(\dl)}[w(e)]_q^2.
    \end{aligned}
\end{equation}
\end{definition}
Note that the multiplicity $\mult_q(\dl,m)$ we use here is different from \cite[Definition 4.14]{ab25} by a factor $\prod_{e\in\eg^{\vec\mu_2}}\frac{[w(e)]_q}{w(e)}\cdot\prod_{e\in\eg^{\vec\mu_1}}\frac{[w(e)]_q}{w(e)}$.
The $q$-refined multiplicity of a marked floor diagram only depends on its type and the underlying floor diagram. The count with refined multiplicity of $d$-marked floor diagrams of genus $g$ and type $(\vec\mu_1,\vec\mu_2)$ is defined as
\begin{equation}
\label{eq:2.2}
N_{q}^{\vec\mu_1,\vec\mu_2}(X_{k}',d,g)=\sum_{[\dl,m]}\mult_q(\dl,m),
\end{equation}
where the sum is taken over the equivalence classes of $d$-marked floor diagrams of genus $g$ and type $(\vec\mu_1,\vec\mu_2)$.

\begin{remark}
    From section \ref{sec:2.2} we see that when $q=1$ the refined count $N_{q}^{\vec\mu_1,\vec\mu_2}(X_{k}',d,g)$ of $d$-marked floor diagrams is equal to the analogue complex count $N_{\cb}^{\vec\mu_1,\vec\mu_2}(X_{k}',d,g)$.
    When $q=-1$ and $g=0$, the refined count $N_{q}^{\vec\mu_1,\vec\mu_2}(X_{k}',d,0)$ is related to the relative Welschinger invariants (see \cite[Section 5]{ab25}).
\end{remark}

\section{Floor diagrams relative to a conic from degeneration}\label{sec:4}
In this section, we prove a correspondence (Theorem \ref{thm:1}) by using the degeneration formula \cite{li2002} (see \cite{lr2001} for the symplectic setting). 
The degeneration argument that we use here was employed by Brugall\'e in \cite{bru2015} 
to compute higher genus Gromov-Witten invariants with only point insertions as well as Welschinger invariants. 
It is different from the one employed by Arg\"uz and Bousseau \cite[Section 4]{ab25}.

\subsection{Relative Gromov-Witten invariants}
\label{sec:rel-gw-bps}
Let $X$ be a smooth projective surface, and $Z$ be a smooth divisor of $X$.
Let $d\in H_2(X;\zb)$ be a curve class with $d\cdot [Z]\geq0$,
and $\mu=(\mu^{(i)})_{1\leq i\leq l(\mu)}$ be an unordered partition of $d\cdot [Z]$.
We choose an ordering of the entries of $\mu$,
and denote by $\vec\mu=(\vec\mu^{(i)})_{1\leq i\leq l(\vec\mu)}$ the corresponding ordered partition.

We recall the construction of moduli stack of relative stable maps
in detail from \cite{li2002} (see \cite{lr2001} for the symplectic setting).
Let $\Delta[r]$ be the surface with normal crossing singularity
obtained by gluing a length $r$
chain of $\pb^1$-bundles $\Delta_1,\ldots,\Delta_r$,
where $\Delta_i=\pb(\ol_Z\oplus N_{Z|X})$ for $i=1,\ldots,r$.
There are two distinguished sections $Z^-_i=\pb(\ol_Z\oplus {0})$
and $Z^+_i=\pb({0}\oplus N_{Z|X})$ of the $\pb^1$-bundle $\Delta_i$.
The $\pb^1$-bundles $\Delta_i$ and $\Delta_{i+1}$ are transversally glued together
along $Z^-_i\simeq Z^+_{i+1}$.
Denote by $X[r]$ the expanded degeneration of $X$ along $Z$
which is obtained from $X$ by gluing with $\Delta[r]$ transversally along
$Z\simeq Z^+_1$.

A genus $g$ degree $d$ \textit{relative map} to $X|Z$
with $n$ absolute marked points and $l(\vec\mu)$ relative marked points weighted by $\vec\mu$
is actually a map targeting
an expanded degeneration $X[r]|Z^-_r$ for some integer $r$:
$$
f:(C,x_1,\ldots,x_n,y_1,\ldots,y_{l(\mu)})\to X[r]|Z^-_r
$$
such that
\begin{itemize}
    \item $C$ is a connected nodal curve of arithmetic genus $g$;
    \item $x_1,\ldots,x_n,y_1,\ldots,y_{l(\vec\mu)}$ are distinct smooth points of $C$;
    \item $f^{-1}(Z^-_r)=\sum_{i=1}^{l(\vec\mu)}\vec\mu^{(i)}y_i$ and $\deg(f)=d$.
\end{itemize}
A relative map $f:C\to X[r]|Z^-_r$ is \textit{pre-deformable}
if the following holds:
\begin{enumerate}
    \item $f^{-1}(Z_i)$ is a discrete set for every $i=0,1,\ldots,r$, where $Z_0=Z^+_1$, and $Z_j=Z^-_j$ for $j\in\{1,2,\ldots,r\}$;
    \item Every point $p\in f^{-1}(Z_i)$ is a node of $C$
    and is the intersection of two irreducible components $C^-$, $C^+$ of $C$ such that $f(C^-)\subset\Delta_i$ and $f(C^+)\subset\Delta_{i+1}$, where $\Delta_0=X$.
    Moreover, the contact order of $f(C^-)$ along $Z_i$ at $p$ is
    equal to the contact order of $f(C^+)$ along $Z_i$ at $p$.
\end{enumerate}
A relative map $f:C\to X[r]|Z^-_r$ is \textit{stable}
if it is pre-deformable and stable when $f:C\to X[r]|Z^-_r$
is considered as a map whose domain is $C$ with $n+l(\vec\mu)$ marked points $x_1,\ldots,x_n,y_1,\ldots,y_{l(\vec\mu)}$.

Let $\overline M_{g,n}(X|Z,d,\vec\mu)$
denote the moduli space of relative stable maps of genus $g$ and degree $d$ to $X|Z$ with $n$ absolute marked points
and $l(\vec\mu)$ relative marked points weighted by $\vec\mu$.
The moduli space $\overline M_{g,n}(X|Z,d,\vec\mu)$
is a proper and separated Deligne-Mumford stack \cite{li2002}.
The virtual dimension of $\overline M_{g,n}(X|Z,d,\vec\mu)$ is
$$
\vdim \overline M_{g,n}(X|Z,d,\vec\mu)=\int_d c_1(X)+g-1+n-d\cdot [Z]+l(\vec\mu),
$$
where $c_1(X)$ is the first Chern class of $X$. Relative Gromov-Witten invariants of $X|Z$
are defined as follows:
\begin{equation}
\label{eq:relative-GW}
\langle\gamma;p_1,\ldots,p_n|\mu,\delta\rangle_{g,d}^{X|Z}:=
\frac{1}{|\aut(\mu,\delta^\circ)|}
\int_{[\overline M_{g,n}(X|Z,d,\vec\mu)]^{vir}}\gamma\cdot\prod_{i=1}^n ev_i^*(p_i)\cdot
\prod_{j=1}^{l(\mu)} (ev_j^Z)^*(\delta_j),
\end{equation}
where,
\begin{itemize}
    \item $[\overline M_{g,n}(X|Z,d,\vec\mu)]^{vir}$
    is the virtual fundamental class of the moduli stack of relative stable maps;
    \item $ev_i:\overline M_{g,n}(X|Z,d,\vec\mu)\to X$ 
    (resp. $ev_i^Z:\overline M_{g,n}(X|Z,d,\vec\mu)\to Z$) is the evaluation map defined by
    $ev_i([f:(C,x_1,\ldots,x_n,y_1,\ldots,y_{l(\mu)})\to X])=f(x_i)$
    (resp. $ev_j^Z([f:(C,x_1,\ldots,x_n,y_1,\ldots,y_{l(\mu)})\to X])=f(y_j)$).
    \item $p_1,\ldots,p_n\in H^{2*}(X;\qb)$, $\delta=(\delta_j)_{1\leq j\leq l(\mu)}$
    with $\delta_j\in H^{2*}(Z;\qb)$, and $\gamma$ is a cohomology class in $H^{2*}(\overline M_{g,n}(X|Z,d,\vec\mu);\qb)$.
    \item $\aut(\mu,\delta^\circ)$ is the group of permutation symmetries of the set of pairs $(\mu^{(j)},\delta_j)$ for all $j\in\{1,\ldots,l(\mu)\}$ such that $\delta_j=1\in H^0(Z;\qb)$.
\end{itemize}
Since we only consider cohomology classes with even degrees, the wedge product in the right-hand side of equation (\ref{eq:relative-GW}) commutes. Hence, the relative Gromov-Witten invariant in equation (\ref{eq:relative-GW})
does not depend on the order chosen to define $\vec\mu$,
and only depends on the unordered partition $\mu$.
The factor $\frac{1}{|\aut(\mu,\delta^\circ)|}$ before the
integration in (\ref{eq:relative-GW}) neutralizes the
effects of the ordering of the relative marked points
with given contact order and trivial cohomology class insertion.

Let $d\in H_2(X_k';\zb)$ be a curve class, and $E\subset X_k'$ be the strict transform of a smooth conic.
Let $\vec\mu=(\vec\mu_1,\vec\mu_2)=(\mu_1^{(1)},\ldots,\mu_1^{(l(\mu_1))},\mu_2^{(1)},\ldots,\mu_2^{(l(\mu_2))})$ 
be an ordered partition of $d\cdot[E]$.
We put
\begin{equation}
\label{equ:relative-GW1}
N_{g,d,n}^{X_{k}'|E}(m,\mu_1,\mu_2):=
\left<(-1)^m\lambda_m;p_1,\ldots,p_n|\mu,\delta\right>_{g,d}^{X_{k}'|E},
\end{equation}
where,
\begin{itemize}
    \item $\lambda_m=c_m(\eb)\in H^{2m}(\overline M_{g,n}(X_k'|E,d,\vec\mu);\qb)$,
    where $m\leq g$, $\eb$ is the Hodge bundle over $\overline M_{g,n}(X_k'|E,d,\vec\mu)$,
    and $c_m(\eb)$ is the $m$-th Chern class of $\eb$;
    \item $p_i\in H^4(X_k';\zb)$ is the cohomology class
    Poincar\'e dual to a point in $X_k'$ for any $1\leq i\leq n$;
    \item $\delta=(\delta_i)_{1\leq i\leq l(\mu)}$,
    where $\delta_i\in H^2(E;\zb)$ is the cohomology class Poincar\'e
    dual to a point on $E$ for any $1\leq i\leq l(\mu_1)$,
    and $\delta_j=1\in H^0(E;\zb)$ for all $l(\mu_1)+1\leq j\leq l(\mu)$;
    \item $n=d\cdot[L]+g-m-1+l(\mu_2)$.
\end{itemize}
When $m=g$, we use $N_{g,d,n}^{X_{k}'|E}(\mu_1,\mu_2)$
as the abbreviation of $N_{g,d,n}^{X_{k}'|E}(g,\mu_1,\mu_2)$.

\begin{remark}
\label{prop:vanishing-X_k1}
Assume that $n:=d\cdot[L]+g-m-1+l(\mu_2)=0$ and $d\cdot[E]\geq0$.
Since $d\cdot[L]+(g-m)+l(\mu_2)=1$,
we have $(d\cdot[L],g-m,l(\mu_2))=(1,0,0)$, $(0,1,0)$ or $(0,0,1)$.
Then relative Gromov-Witten invariants $N_{g,d,0}^{X_{k}'|E}(m,\mu_1,\mu_2)$ are all zero,
except for the above three cases.
\end{remark}

\begin{proposition}[{\cite[Lemma 4.12, Lemma 4.13]{ab25}}]\label{prop:vanishing-X_k2}
In the case $(d\cdot[L],g-m,l(\mu_2))=(1,0,0)$, we have
$$
\begin{aligned}
&\sum_{g\geq0}N_{g,[L],0}^{X_{k}'|E}((1,1),\emptyset)u^{2g}=u^{-1}2\sin(\frac{u}{2})=u^{-1}(-i)(q^\frac{1}{2}-q^{-\frac{1}{2}}),\\
&\sum_{g\geq0}N_{g,[L],0}^{X_{k}'|E}((2),\emptyset)u^{2g-1}=u^{-1}\cos(\frac{u}{2})=u^{-1}\frac{[2]_q}{2}, \\
&\sum_{g\geq0}N_{g,[L]-[E_i],0}^{X_{k}'|E}((1),\emptyset)u^{2g-1}=u^{-1}, \\
&\sum_{g\geq0}N_{g,[L]-[E_i]-[E_j],0}^{X_{k}'|E}(\emptyset,\emptyset)u^{2g-2}=\left(2u\sin(\frac{u}{2})\right)^{-1}=u^{-1}((-i)(q^\frac{1}{2}-q^{-\frac{1}{2}}))^{-1}.
\end{aligned}
$$
\end{proposition}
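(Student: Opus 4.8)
The plan is to use that in every case the class is minimal, in the sense that $d\cdot[L]=1$, so a degree-$d$ relative stable map has a single non-contracted component mapping with degree one onto a fixed rational curve $C_0\cong\pb^1$, and all the genus is carried by contracted components. First I would pin down $C_0$. For $d=[L]$ with $\vec\mu_1=(1,1)$ (resp. $\vec\mu_1=(2)$) the relative insertions $\delta_1,\delta_2$ (resp. $\delta_1$), being Poincar\'e dual to points of $E$, fix $C_0$ to be the unique line through the two prescribed points of $E$ (resp. the unique tangent line to $E$ at the prescribed point); for $d=[L]-[E_i]$ the curve $C_0$ is the line through the blown-up point and the prescribed point of $E$; and for $d=[L]-[E_i]-[E_j]$ it is the line through the two blown-up points, which is disjoint from $E$. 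Since $n=0$, no absolute point insertion is present, and the relative insertions only serve to rigidify $C_0$.

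Next I would reduce the virtual integral to a Hodge integral on $\overline{M}_{g,\bullet}$. The only nontrivial cohomological insertion is $(-1)^g\lambda_g=c_g(\eb^\vee)$, so the contribution of the contracted genus-$g$ components is governed by the normal-direction obstruction $H^1(f^*N_{C_0|X_k'})$ paired against $c_g(\eb^\vee)$, and the outcome is determined by the self-intersection $C_0^2=\deg N_{C_0|X_k'}$ together with the tangency profile along $E$. The three self-intersections $[L]^2=1$, $([L]-[E_i])^2=0$ and $([L]-[E_i]-[E_j])^2=-1$ are exactly what produce the three qualitatively different generating functions.

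I would then evaluate using the fundamental $\lambda_g$-Hodge-integral identity
\begin{equation*}
\sum_{g\geq0}\left(\int_{\overline{M}_{g,1}}\psi_1^{2g-2}\lambda_g\right)u^{2g}=\frac{u/2}{\sin(u/2)},
\end{equation*}
together with $[w]_q=\sin(wu/2)/\sin(u/2)$ under $q=e^{iu}$. For $d=[L]-[E_i]-[E_j]$ (normal bundle $\ol(-1)$, no tangency) the contribution localizes to $\int_{\overline{M}_{g,1}}\psi_1^{2g-2}\lambda_g$, and hence sums to $u^{-2}\cdot\frac{u/2}{\sin(u/2)}=(2u\sin(u/2))^{-1}$. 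For $d=[L]-[E_i]$ (normal bundle $\ol$) the normal obstruction on a contracted component is $H^1(\ol)\cong\eb^\vee$, whose top Chern class is $c_g(\eb^\vee)=(-1)^g\lambda_g$; pairing it against the insertion $(-1)^g\lambda_g$ yields $\lambda_g^2=0$ for $g\geq1$, so only the genus-zero count $u^{-1}$ survives. For $d=[L]$ with $\vec\mu_1=(1,1)$ (normal bundle $\ol(1)$, two transverse points) one gets the reciprocal series $2\sin(u/2)/u=u^{-1}(-i)(q^{\frac{1}{2}}-q^{-\frac{1}{2}})$, while for $\vec\mu_1=(2)$ the single order-$2$ tangency replaces the odd (sine) building block by its even counterpart, giving $\cos(u/2)/u=u^{-1}[2]_q/2$.

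The hard part will be the second step: matching the deformation--obstruction theory of the relative moduli space---including components that escape into the expanded degeneration $\Delta[r]$ over $E$ and the weighting of the relative marked points---to these local Hodge integrals, and in particular verifying that the order-$2$ tangency in the $\vec\mu_1=(2)$ case converts the sine generating function into the cosine one. This is precisely the relative $\pb^1$-bundle building-block computation carried out by Bousseau in \cite{bou2021}; an efficient alternative is to degenerate $X_k'$ along $E$ and read off each factor directly from the relative invariants of $\nl=\pb(\ol_E\oplus N_{E|X_k'})$ established there.
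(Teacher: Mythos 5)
The first thing to note is that the paper contains no proof of this proposition: as the bracketed attribution in its statement indicates, it is quoted wholesale from Arg\"uz--Bousseau \cite{ab25} (their Lemmas 4.12 and 4.13), and the only adjacent case the paper argues itself is Proposition \ref{prop:excep1}. So there is no internal argument to compare you against; what can be judged is whether your sketch would stand as a proof. Its endpoints are all correct: the identification of the unique curve in each case (the line through two fixed points of $E$, the tangent line at a fixed point, the line through the blown-up point and a fixed point of $E$, and the line through two blown-up points, which is indeed disjoint from the strict transform of $E$), the self-intersection numbers, and all four generating series together with their $q$-forms, including the Faber--Pandharipande identity you quote.

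The genuine gap sits exactly where you flag it, and it is larger than the flag suggests. First, for the two cases carrying all the content of the proposition --- tangency profile $(1,1)$ versus $(2)$ --- you give no derivation at all: ``reciprocal series'' and ``the tangency converts sine into cosine'' are restatements of the answer, not arguments. Second, the obstruction-theoretic heuristic you do spell out is internally inconsistent. For $[L]-[E_i]$ you identify the contracted-component obstruction with $\eb^\vee$ (the normal fiber at the attaching point being a fixed line), pair with the insertion $(-1)^g\lambda_g$, and conclude via $\lambda_g^2=0$; but the identical identification in the $[L]-[E_i]-[E_j]$ case (where the normal fiber at the attaching point is likewise one-dimensional) would force that case to vanish as well, contradicting the nonzero answer $\left(2u\sin(u/2)\right)^{-1}$. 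What actually distinguishes the two is that for $N_{C_0}=\ol(-1)$ the attaching point $y$ varies along $C_0$, so the twist is a nontrivial line bundle on the moduli space and $e(\eb^\vee\otimes N_y)=\sum_i(-1)^i\lambda_i\,c_1(N_y)^{g-i}$ acquires lower-order terms; these, together with the tangent-direction obstruction $e(\eb^\vee\otimes T_yC_0)$ which is where the $\psi_1$ classes come from, survive against $\lambda_g$ and assemble into $\int_{\overline M_{g,1}}\lambda_g\psi_1^{2g-2}$. None of this mechanism --- nor the treatment of components falling into the expanded degenerations at the relative points, which is what really governs the $(1,1)$ versus $(2)$ dichotomy --- appears in the sketch. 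Your closing fallback is the right move: deform $X_k'$ to the normal cone of $E$ and read the four series off Bousseau's relative invariants of $\nl=\pb(\ol_E\oplus N_{E|X_{k}'})$ from \cite{bou2021}; carried out, that would make your argument at least as self-contained as the paper's own treatment, which is a bare citation. As written, however, the proposal is an outline whose decisive computations are deferred.
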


We consider the remaining two cases in Proposition \ref{prop:excep1}.

\begin{proposition}\label{prop:excep1}
When $(d\cdot[L],g-m,l(\mu_2))=(0,1,0)$ or $(0,0,1)$.
The relative Gromov-Witten invariants $N_{g,l[E_i],0}^{X_{k}'|E}(m,\mu_1,\mu_2)$ are all zero,
except for the following:
$$
\sum_{g\geq0} N_{g,l[E_i],0}^{X_{k}'|E}(\emptyset,(l))u^{2g-1}=
\frac{(-1)^{l-1}}{l}\frac{1}{2\sin(\frac{lu}{2})},
$$
where $i=1,\ldots,k$, and $l\geq1$.
\end{proposition}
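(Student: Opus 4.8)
The plan is to reduce everything to covers of the exceptional curve $E_i$ and then to isolate a single surviving local invariant, all other cases vanishing for an elementary incidence reason. First I would observe that $E_i$ is a $(-1)$-curve, hence the unique irreducible curve in any class proportional to $[E_i]$, and that $l[E_i]\cdot[E_i]=-l<0$ forces every non-contracted component of a stable map in class $l[E_i]$ to be mapped \emph{onto} $E_i$. Thus all contributing relative stable maps are covers of $E_i\cong\pb^1$ together with contracted components, and the only point of $E_i$ on the relative divisor is the single transversal intersection point $p_i:=E_i\cap E$, which satisfies $E_i\cdot E=1$ so that $d\cdot[E]=l$.

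\emph{Vanishing.} Each relative marked point $y_j$ with $j\le l(\mu_1)$ carries the insertion $(ev^E_j)^*\delta_j$ with $\delta_j\in H^2(E;\zb)$ Poincar\'e dual to a \emph{generic} point of $E$. By the previous step every relative marked point is mapped into $E_i\cap E=\{p_i\}$; even after passing to an expanded degeneration $X_k'[r]$, the $\pb^1$-bundle projections $\Delta_i\to E$ send all relative marked points to $p_i$, so $ev^E_j\equiv p_i$ on the entire moduli space. Since $p_i$ is not the chosen generic point, the class $(ev^E_j)^*\delta_j$ vanishes identically whenever $l(\mu_1)\ge1$. As $\mu$ is a partition of $l\ge1$, this kills the whole case $(d\cdot[L],g-m,l(\mu_2))=(0,1,0)$ (where $\mu=\mu_1$ and $l(\mu_1)=l(\mu)\ge1$) together with every subcase of $(0,0,1)$ having $\mu_1\ne\emptyset$. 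The only configuration that survives is $\mu_1=\emptyset$, $\mu_2=(l)$, $m=g$, which is precisely the invariant $N_{g,l[E_i],0}^{X_k'|E}(\emptyset,(l))$ appearing in the statement.

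\emph{Reduction to a local model.} For $\mu_1=\emptyset$, $\mu_2=(l)$, $m=g$ the invariant counts genus $g$ covers of $E_i$ with full contact order $l$ with $E$ at $p_i$, weighted by $(-1)^g\lambda_g$ and with no absolute point insertions. Because every contributing map has image $E_i$ and meets the relative divisor only at $p_i$, the moduli space and its perfect obstruction theory depend only on the formal neighborhood of $E_i$ in $X_k'$ together with the germ of $E$ at $p_i$; this datum is isomorphic to $\bigl(\mathrm{Tot}(N_{E_i|X_k'}),F_{p_i}\bigr)$ with $N_{E_i|X_k'}\cong\ol_{\pb^1}(-1)$ and $F_{p_i}$ the fiber over $p_i$. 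As a sanity check, in genus $0$ the contributing maps are the totally ramified degree $l$ covers $\pb^1\to E_i$, the obstruction space is $H^1(f^*N_{E_i|X_k'})=H^1(\ol(-l))$ of rank $l-1$, and its Euler class together with the $\zb/l$ automorphisms of the cover yields $\tfrac{(-1)^{l-1}}{l^2}$, matching the $u\to0$ limit of the claimed formula.

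The hard part will be the all-genus evaluation of this local relative invariant. I would obtain it from the local relative Gromov–Witten theory of a $(-1)$-curve relative to a point (equivalently via the formulae of Bousseau in \cite{bou2021} and virtual localization with respect to the $\cb^*$-action scaling the fibers of $N_{E_i|X_k'}$), which packages the $\lambda_g$-twisted genus series into a factor $\bigl(q^{l/2}-q^{-l/2}\bigr)^{-1}=\bigl(2i\sin(lu/2)\bigr)^{-1}$ after $q=e^{iu}$. Tracking the conventional $i$-factors and the degree prefactor $(-1)^{l-1}/l$ (the sign from the $\ol(-1)$-twist and the $1/l$ from the automorphisms of the totally ramified cover) then gives
$$
\sum_{g\ge0}N_{g,l[E_i],0}^{X_k'|E}(\emptyset,(l))\,u^{2g-1}=\frac{(-1)^{l-1}}{l}\frac{1}{2\sin(lu/2)},
$$
while all remaining invariants vanish by the incidence argument above, completing the proof.
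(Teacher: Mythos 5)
Your proposal is correct and takes essentially the same route as the paper's proof: rigidity of the $(-1)$-curve $E_i$ forces every stable map in class $l[E_i]$ to factor through $E_i$, the generic point constraints on $E$ then kill all invariants with $\mu_1\neq\emptyset$ (hence the whole case $(0,1,0)$ and all of $(0,0,1)$ except $\mu_2=(l)$), and the surviving invariant reduces to an $\ol(-1)$-twisted local multiple-cover computation whose all-genus evaluation is quoted from the literature. The only difference is the reference for that last step — the paper cites Bryan--Pandharipande's TQFT result \cite{bp-2005}, while you appeal to the equivalent local relative theory of a $(-1)$-curve via \cite{bou2021} and localization — which is presentational rather than substantive.
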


\begin{proof}
We prove this proposition case by case.

\textbf{Case $(1)$:} $(d\cdot[L],g-m,l(\mu_2))=(0,1,0)$.

The curve class $d$ has to be $l[E_i]$
for some $i=1,\ldots,k$, where $l\geq1$,
and $m=g-1$. Moreover,
$\mu=\mu_1$ is a partition of $l=d\cdot[E]$, so $l(\mu_1)\geq1$.
We fix the position of $l(\mu_1)$ contact points in $E$.
Since the exceptional curve $E_i$ is rigid in $X_{k}'$,
every stable map of class $l[E_i]$ factors through $E_i$.
We can choose the position of the $l(\mu_1)$ contact points in $E\setminus E_i$.
but a curve of class $l[E_i]$ is contained in the exceptional divisor $E_i$.
Hence, the set of curves matching the constraints is empty,
and the corresponding invariant is zero.

\textbf{Case $(2)$:} $(d\cdot[L],g-m,l(\mu_2))=(0,0,1)$.

The curve class $d$ has to be $l[E_i]$
for some $i=1,\ldots,k$, where $l\geq1$.
Suppose that $\mu_2=(l_2)$ with $l_2\leq l$.
Since the exceptional curve $E_i$ is rigid in $X_{k}'$,
every stable map of class $l[E_i]$ factors through $E_i$.
The composition of a relative stable map to $E_i$ with the inclusion $E_i\subset X_{k}'$
defines a closed embedding
$$
\overline M_{g,0}(E_i|(E\cap E_i),l[E_i],\vec\mu)
\subset\overline M_{g,0}( X_k'|E,l[E_i],\vec\mu).
$$
Let $\pi:\cl\to\overline M_{g,0}(E_i|(E\cap E_i),l[E_i],\vec\mu)$
be the universal curve, and $f:\cl\to\pb^1$ the universal map.
The difference between the perfect obstruction theories for
curves mapping to the surface $X_{k}'$ and
for curves mapping to the curve $E_i\simeq\pb^1$ with normal bundle $\ol(-1)$ in $X_{k}'$
is the Euler class $e(R^1\pi_*f^*\ol(-1))$.
Moreover, $N_{g,l[E_i],0}^{X_{k}'|E}(\mu_1,(l_2))$ is non-zero only if $\mu_1=\emptyset$.
Hence, we obtain
\begin{equation}
\label{eq:non-vanishing1}
N_{g,l[E_i],0}^{X_{k}'|E}(\emptyset,(l_2))=
\int_{[\overline M_{g,0}(E_i|(E\cap E_i),l[E_i],\vec\mu)]^{vir}}
(-1)^g\lambda_g\cdot e(R^1\pi_*f^*\ol(-1)).
\end{equation}
The virtual dimension of the moduli stack
$\overline M_{g,0}(E_i|(E\cap E_i),l[E_i],\vec\mu)$
is $2g-2+l+l(\mu)$, while the complex dimension of the
integrand in equation (\ref{eq:non-vanishing1}) is $2g-1+l$.
The invariant $N_{g,l[E_i],0}^{X_{k}'|E}(\mu_1,(l_2))$ is non-zero
only if $\mu=\mu_2=(l)$.
It follows from \cite[Theorem 5.1]{bp-2005} that
$N_{g,l[E_i],0}^{X_{k}'|E}(\emptyset,(l))$ is the coefficient of $u^{2g-1}$ in
$$
\frac{(-1)^{l-1}}{l}\frac{1}{2\sin(\frac{lu}{2})}.
$$
\end{proof}

\subsection{Degeneration of the surface $X_k'$}
\label{sec:4.1}

Recall that $\nl$ is the $\pb^1$-bundle $\pb(\ol_E\oplus N_{E|X_{k}'})$, 
and $Z^-=\pb(\ol_E\oplus\{0\})$, $Z^+=\pb(\{0\}\oplus N_{E|X_{k}'})$ are the two distinguished sections of the $\pb^1$-bundle $\nl$. 
We choose a homology class $d\in H_2(X_k';\zb)$ such that $d\cdot[L]>0$ and $d\cdot[E]\geq0$. 
Let $\vec\mu=(\vec\mu_1,\vec\mu_2)$ be an ordered partition of $d\cdot [E]$, 
and $g$, $g'$ be two non-negative integers with $g'\leq g$.

Let $n=d\cdot[L]-1+g'+l(\mu_2)$. We assume that $n>0$.
By successive degeneration of $X_{k}'$ to the normal cone of $E$,
\textit{i.e.} the blow-up of $X_{k}'\times\ab^1$ along $E\times\{0\}$, we construct a flat morphism 
$\pi:\xl\to\ab^1$ satisfying the following conditions:
\begin{itemize}
    \item $\xl_t:=\pi^{-1}(t)=X_{k}'$ for $t\neq0$;
    \item the special fiber $\xl_0:=\pi^{-1}(0)$ is a chain of $X_{k}'$ and $n$ copies of $\pb^1$-bundle $\nl=\pb(\ol_E\oplus N_{E|X_{k}'})$:
    $$
    \pi^{-1}(0)=X_{k}'\sideset{_E}{_{Z_n^+}}{\mathop{\cup}} \nl_{n}\sideset{_{Z_n^-}}{_{Z^+_{n-1}}}{\mathop{\cup}}\nl_{n-1}\sideset{_{Z^-_{n-1}}}{_{Z^+_{n-2}}}{\mathop{\cup}}\cdots\sideset{_{Z^-_2}}{_{Z^+_1}}{\mathop{\cup}}\nl_1,
    $$
    where $Z^-_i=\pb(\ol_E\oplus\{0\})$ and $Z^+_i=\pb(\{0\}\oplus N_{E|X_{k}'})$ are the two distinguished sections in $\nl_i=\pb(\ol_E\oplus N_{E|X_{k}'})$, $i=1,\ldots,n$.
    Moreover, in the special fiber $\pi^{-1}(0)$, $X_{k}'$ intersects $\nl_n$ transversely along $E\simeq Z^+_n$ and is disjoint with $\nl_i$ for $1\leq i<n$, $\nl_j$ intersects $\nl_{j-1}$ transversely along $Z^-_j\simeq Z_{j-1}^+$ and $\nl_j\cap \nl_l=\emptyset$ when $|j-l|>1$. 
\end{itemize}

\subsection{Decorated weighted graphs from degeneration of curves}

To compute the relative invariant $N_{g,d,n}^{X_{k}'|E}(g-g',\mu_1,\mu_2)$, we precisely state 
the degeneration formula in relative Gromov-Witten theory \cite{li2002,lr2001} 
that is applied to the degeneration $\pi:\xl\to\ab^1$. Here, 
we distribute the $n$ point classes appearing in (\ref{equ:relative-GW1}) 
by placing one on each of the $n$ components $\nl_1,\ldots,\nl_n$.
Weighted decorated graphs, which are dual graphs of stable maps, 
are used to index the terms in the degeneration formula and 
to describe the degeneration types of curves in the special fiber.

\begin{definition}
\label{def:4.1}
We denote by $\gl_{g,g',d,n}^{\vec\mu_1,\vec\mu_2}$ the set of connected decorated weighted graphs $\Gamma$ given below.
\begin{enumerate}
    \item $\Gamma$ is a finite connected weighted graph. $l(\vec\mu_1)$ (resp. $l(\vec\mu_2)$) leaves of $\Gamma$ are decorated by the entries in the partition $\vec\mu_1$ (resp. $\vec\mu_2$), respectively. Denote by $\vt^{\vec\mu_1}(\Gamma)$ and $\vt^{\vec\mu_2}(\Gamma)$ the sets of leaves decorated by $\vec\mu_1$ and $\vec\mu_2$, respectively.
    \item Every vertex $v\in\vt^\circ(\Gamma)=\vt(\Gamma)\setminus(\vt^{\vec\mu_1}(\Gamma)\cup\vt^{\vec\mu_2}(\Gamma))$ is decorated by a genus $g_v\in\zb_{\geq0}$ such that $g_\Gamma+\sum\limits_{v\in\vt^\circ(\Gamma)}g_v=g$, where $g_\Gamma$ is the first Betti number of the graph $\Gamma$.
    \item Every vertex $v\in\vt^\circ(\Gamma)$ has an index decoration $j_v\in\{1,\ldots,n+1\}$ and a cohomology class decoration. Moreover, an inner vertex $v$ with index $1\leq j_v\leq n$ is decorated by $d_v=h_v[Z^+_{j_v}]+l_v[F_{j_v}]\in H_2(N_{j_v};\zb)$. Every vertex $v$ with $j_v=n+1$ is decorated by $d_v=a_v[L]-\sum_{i=1}^kb_v^i[E_i]\in H_2(X_k';\zb)$.
    \item For every $j\in\{1,\ldots,n\}$, there is a distinguished vertex among the vertices with index $j_v=j$. We denote the distinguished vertex with index $j$ by $v_j$.
    \item Every inner edge $e\in\eg(\Gamma)$ is decorated by an index $j_e\in\{1,\ldots,n\}$, and every end is decorated by an index $j_e=0$ or $n$.
    Moreover, the $l(\vec\mu_1)+l(\vec\mu_2)$ ends adjacent to the leaves in $\vt^{\vec\mu_1}(\Gamma)\cup\vt^{\vec\mu_2}(\Gamma)$ are exactly the ends decorated by $j_e=0$. These ends are weighted by the entry decorations of their leaves. If $e$ is an edge with $0\leq j_e\leq n$, the two endpoints of $e$ are indexed by $j_e,j_{e}+1$.
    \item Let $v$ be a vertex with index $j_v=n+1$ and $d_v=a_v[L]-\sum_{i=1}^kb_v^i[E_i]$. Assume that there are $r$ edges $e_1,\ldots,e_r$ adjacent to $v$.
    Then the tuple of weights of edges $e_1,\ldots,e_r$ is a partition of $2a_v-\sum_{i=1}^kb_v^i$. 
    Let $v$ be an inner vertex with $1\leq j_v\leq n$. Assume that there are $s$ edges $e'_1,\ldots,e'_s$ incident to $v$ with $j_{e'_i}=j_v$ and $t$ edges $e''_1,\ldots,e''_t$ incident to $v$ with $j_{e''_i}=j_v-1$. Then the tuple of weights of edges $e'_1,\ldots,e'_s$ is a partition of $(k-4)h_v+l_v$, and the tuple of weights of edges $e''_1,\ldots,e''_t$ is a partition of $l_v$.
    \item Every half-edge, \textit{i.e.} a pair $(v,e)$ with $v\in\vt(\Gamma)$ and $e\in\eg(\Gamma)$ adjacent to $v$, is decorated by a cohomology class $c_{(v,e)}\in H^*(Z_{j_e};\zb)$. If $e$ is an end adjacent to a leaf in $\vt^{\vec\mu_2}(\Gamma)$ (resp. $\vt^{\vec\mu_1}(\Gamma)$), the cohomology class $c_{(v,e)}=pt\in H^2(Z_{j_e};\zb)$ (resp. $c_{(v,e)}=1\in H^0(Z_{j_e};\zb)$), where $pt$ is the cohomology class Poincar\'e dual to a point in $Z_{j_e}$. In the case that $e$ is an end with $j_e=n$, and $v$ is indexed by $j_v=n+1$ and $d_v\neq b_v^i[E_i]$ (resp. $d_v=b_v^i[E_i]$), the cohomology class $c_{(v,e)}=pt\in H^2(Z_{j_e};\zb)$ (resp. $c_{(v,e)}=1\in H^0(Z_{j_e};\zb)$). In the remaining cases, for exactly one vertex $v$ incident to $e$ we have $c_{(v,e)}=1$, and for the other vertex $v'$ incident to $e$, we have $c_{(v',e)}=pt$.
    \item Every vertex $v\in\vt^\circ(\Gamma)$ is decorated by an index $0\leq m_v\leq g_v$ such that $\sum_{v}m_v=g-g'$.
\end{enumerate}
\end{definition}

We refer the readers to the proof of Proposition \ref{prop:4.1}, \cite[Lemma 5.5]{bou2021}
and \cite[Proposition 5.1]{bru2015} for the geometric interpretation of the graphs in $\gl^{\vec\mu_1,\vec\mu_2}_{g,g',d,n}$.

\begin{definition}
\label{def:4.2}
Let $\Gamma\in\gl^{\vec\mu_1,\vec\mu_2}_{g,g',d,n}$ and $v\in\vt^\circ(\Gamma)$. We put
$$
\mu_v:=(w_e)_{e\in E_{v,+}}, \nu_v:=(w_e)_{e\in E_{v,-}}, \delta_v^2:=(c_{v,e})_{e\in E_{v,+}}, \delta_v^1:=(c_{v,e})_{e\in E_{v,-}},
$$
where $E_{v,+}$ (resp. $E_{v,-}$) is the set of edges adjacent to $v$ such that $j_e=j_v$ (resp. $j_e=j_v-1$).
We define some relative Gromov-Witten invariants of $X_{k}'$ and $\nl$ as follows.
\begin{equation}
\label{eq:4.1}
N_{\Gamma,v}:=\left\{
\begin{array}{ll}
\langle(-1)^{m_v}\lambda_{m_v}|\nu_v,\delta_v^1\rangle_{g_v,d_v}^{X_k'/E}& \text{ if } j_v=n+1, \\
\langle\mu_v,\delta_v^2|(-1)^{m_v}\lambda_{m_v}|\nu_v,\delta_v^1\rangle_{g_v,d_v}^{\nl_{j_v}/Z^+_{j_v}\cup Z^-_{j_v}} & \text{ if } 1\leq j_v\leq n \text{ and } v\neq v_{j_v},\\
\langle\mu_v,\delta_v^2|(-1)^{m_v}\lambda_{m_v};p_{j_v}|\nu_v,\delta_v^1\rangle_{g_v,d_v}^{\nl_{j_v}/Z^+_{j_v}\cup Z^-_{j_v}} & \text{ if } 1\leq j_v\leq n \text{ and } v= v_{j_v}.
\end{array}
\right.
\end{equation}
\end{definition}

\begin{proposition}
\label{prop:4.1}
The relative Gromov-Witten invariants $N_{g,d,n}^{X_{k}'|E}(g-g',\mu_1,\mu_2)$ of $X_k'$ are given as follows.
\begin{equation}
\label{eq:4.2}
N_{g,d,n}^{X_{k}'|E}(g-g',\mu_1,\mu_2)=\sum_{\Gamma\in\gl^{\vec\mu_1,\vec\mu_2}_{g,g',d,n}}\frac{\prod_{e\in\eg^\circ(\Gamma)}w_e}{|\aut(\Gamma)|}\prod_{v\in\vt^\circ(\Gamma)}N_{\Gamma,v},
\end{equation}
where $\aut(\Gamma)$ is the group of permutation symmetries of $\Gamma$ as decorated weighted graph, and $\eg^\circ(\Gamma)$ is the set of edges of $\Gamma$ except the ends adjacent to vertices in $\vt^{\vec\mu_1}(\Gamma)\cup\vt^{\vec\mu_2}(\Gamma)$.
\end{proposition}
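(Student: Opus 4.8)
The plan is to apply the degeneration formula of relative Gromov--Witten theory \cite{li2002} (see \cite{lr2001} for the symplectic setting) to the flat family $\pi:\xl\to\ab^1$ constructed in Section \ref{sec:4.1}. Its general fiber is the relative pair $(X_k',E)$, while the central fiber is the chain $X_k'\cup\nl_n\cup\cdots\cup\nl_1$ with far relative divisor $Z^-_1$, the consecutive pieces being glued transversally along $E\simeq Z^+_n$ and $Z^-_j\simeq Z^+_{j-1}$. The formula then writes $N_{g,d,n}^{X_{k}'|E}(g-g',\mu_1,\mu_2)$ as a sum over the ways a connected genus $g$ relative stable map can distribute its components among the pieces and match, with prescribed contact orders, along the gluing divisors, while carrying the $\vec\mu$-tangency data on the ends lying over the far relative divisor. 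These splitting data are indexed by dual graphs decorated with a genus, a degree class, a piece-index and a K\"unneth label on each half-edge, and this is exactly the set $\gl^{\vec\mu_1,\vec\mu_2}_{g,g',d,n}$ of Definition \ref{def:4.1}. The argument runs parallel to \cite[Lemma 5.5]{bou2021} and \cite[Proposition 5.1]{bru2015}; the genuinely new ingredient is the insertion of $\lambda_{g-g'}$, treated in the last paragraph.

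First I would spell out the dictionary between the degeneration data and the conditions of Definition \ref{def:4.1}. The relation $g_\Gamma+\sum_{v}g_v=g$ of condition (2) is the arithmetic-genus formula for a nodal curve with dual graph $\Gamma$; the degree decorations and the weight balancing of condition (6) record the contact orders as intersection numbers, namely $d_v\cdot[E]=2a_v-\sum_i b_v^i$ on the $X_k'$-piece and $d_v\cdot[Z^+]=(k-4)h_v+l_v$, $d_v\cdot[Z^-]=l_v$ on the $\nl$-pieces, using $[Z^-]^2=-[Z^+]^2=4-k$; and conditions (1), (3), (5) record that $(\vec\mu_1,\vec\mu_2)$ is carried by the ends over the far relative divisor, with the $\vec\mu_1$-ends weighted by the point class and the $\vec\mu_2$-ends by the fundamental class of $E$. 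Because the $n$ point constraints are placed one on each $\nl_j$ and the points are generic, each $\nl_j$ must contain a single component passing through its point, which is the distinguished vertex $v_j$ of condition (4) and produces the third case of $N_{\Gamma,v}$ in \eqref{eq:4.1}. The K\"unneth decomposition of the diagonal of every gluing divisor $Z\simeq E\simeq\pb^1$ as $1\otimes pt+pt\otimes1$ yields the half-edge cohomology decorations of condition (7); the standard gluing multiplicity of a node of contact order $w_e$ gives the factor $\prod_{e\in\eg^\circ(\Gamma)}w_e$; and combining the graph symmetry factor with the normalizations $|\aut(\mu,\delta^\circ)|^{-1}$ of \eqref{eq:relative-GW} reproduces the prefactor $\prod_{e\in\eg^\circ(\Gamma)}w_e/|\aut(\Gamma)|$ of \eqref{eq:4.2}.

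The central and most delicate step is the treatment of the Lambda class. The key input is that under the normalization of a nodal curve the Hodge bundle splits as $\eb=\bigoplus_v\eb_v$ up to a trivial summand of rank $g_\Gamma=b_1(\Gamma)$ coming from the loops of $\Gamma$, so its total Chern class is multiplicative, $c(\eb)=\prod_v c(\eb_v)$. Restricting to the boundary stratum indexed by $\Gamma$ and extracting the degree $g-g'$ part gives $\lambda_{g-g'}=\sum\prod_v\lambda_{m_v}$ over all tuples with $\sum_v m_v=g-g'$ and $0\le m_v\le g_v$, and since $(-1)^{g-g'}=\prod_v(-1)^{m_v}$, the signed class $(-1)^{g-g'}\lambda_{g-g'}$ distributes as $\prod_v(-1)^{m_v}\lambda_{m_v}$; this is precisely the index decoration $m_v$ of condition (8) and the sign appearing in \eqref{eq:4.1}. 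I expect the main obstacle to be the bookkeeping: checking that the virtual fundamental classes pull back compatibly under the gluing morphisms, that the splitting of $(-1)^{g-g'}\lambda_{g-g'}$ is compatible with the splitting of virtual classes furnished by the degeneration formula, and that every automorphism and every gluing multiplicity is accounted for exactly once, so that the a priori large sum produced by the degeneration formula collapses to the clean sum over $\gl^{\vec\mu_1,\vec\mu_2}_{g,g',d,n}$.
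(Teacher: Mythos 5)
Your proposal is correct and follows essentially the same route as the paper: the paper also proves Proposition \ref{prop:4.1} by applying the degeneration formula of \cite{li2002} to the degeneration $\pi:\xl\to\ab^1$ of Section \ref{sec:4.1} with one point class placed on each $\nl_j$, interpreting the graphs in $\gl^{\vec\mu_1,\vec\mu_2}_{g,g',d,n}$ as dual graphs of stable maps to the central fiber (the leaves in $\vt^{\vec\mu_1}\cup\vt^{\vec\mu_2}$ being the relative markings on $Z_1^-$), and deferring the remaining details---including the splitting $(-1)^{g-g'}\lambda_{g-g'}=\sum_{\sum m_v=g-g'}\prod_v(-1)^{m_v}\lambda_{m_v}$ via the Hodge bundle decomposition, the K\"unneth/diagonal decorations, and the multiplicity and automorphism bookkeeping---to the proof of \cite[Lemma 5.5]{bou2021}, which is exactly what you spell out explicitly. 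No gaps; the only cosmetic difference is that you work out in-line what the paper handles by citation.
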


\begin{proof}
Equation (\ref{eq:4.2}) is the precise form of the degeneration formula for relative Gromov-Witten invariants \cite{li2002} (see also \cite{lr2001}) applied to the degeneration $\pi:\xl\to\ab^1$ in Section \ref{sec:4.1}, where we distribute the $n$ point classes by placing one on each of the $n$ components $\nl_1,\ldots,\nl_n$.
Let $\Gamma$ be a decorated weighted graph in $\gl^{\vec\mu_1,\vec\mu_2}_{g,g',d,n}$.
The graph $\Gamma\setminus(\vt^{\vec\mu_1}(\Gamma)\cup\vt^{\vec\mu_2}(\Gamma))$ is the dual graph of a stable map in the special fiber $\pi^{-1}(0)$.
The proof of this proposition is the same as the proof of \cite[Lemma 5.5]{bou2021}, so we only explain the meaning of leaves in $\vt^{\vec\mu_1}(\Gamma)\cup\vt^{\vec\mu_2}(\Gamma)$ here and refer the readers to the proof of \cite[Lemma 5.5]{bou2021} for the rest of the proof. See \cite[Proposition 5.1]{bru2015} for the case without Lambda class.

Let $e$ be an end with $j_e=0$ and $v\in\vt^{\vec\mu_1}(\Gamma)\cup\vt^{\vec\mu_2}(\Gamma)$ 
be the leave adjacent to $e$. Then $v$ and $e$ correspond to a relative marking on $Z^-_1$. 
An end $e'$ with $j_{e'}=n$ corresponds to a node of the source curve 
which is mapped to the divisor $E\simeq Z_{n}^+$ by a relative stable map.
A vertex $v'$ with $j_{v'}=n+1$ corresponds to a curve of genus $g_{v'}$ and class $d_{v'}$ contained in $X_k'$. 
The rest is the same as the proof of \cite[Lemma 5.5]{bou2021}, so we omit it.
\end{proof}

The following proposition is essentially \cite[Corollary 5.4]{bru2015}. 
We just modify \cite[Corollary 5.4]{bru2015} and its proof to fit our setting.
\begin{proposition}[{\cite[Corollary 5.4]{bru2015}}]
\label{prop:4.2}
Suppose that $d\neq l[E_i]$ with $l\geq2$. Let $v\in\Gamma$ be a leaf with $j_v=n+1$, $d_v=b_v^i[E_i]$, $g_v=m_v$ and $c_{(v,e)}=1$ for the edge $e$ adjacent to $v$, where $i\in\{1,\ldots,k\}$. If the relative Gromov-Witten invariant $N_{\Gamma,v}$ has non-trivial contribution to equation $(\ref{eq:4.2})$, the homology class $d_v=[E_i]$ for some $i$.
\end{proposition}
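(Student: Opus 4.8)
The plan is to pin down the local data at the leaf $v$ first, and then to exclude $b_v^i\geq2$ globally, since the vanishing cannot be seen at $v$ alone. First I would record the combinatorics forced by $v$ being a leaf with $j_v=n+1$: its unique adjacent edge $e$ is an end with $j_e=n$, so $E_{v,+}=\emptyset$ and $E_{v,-}=\{e\}$, and by item $(6)$ of Definition \ref{def:4.1} its weight is $w_e=d_v\cdot[E]=b_v^i$. Hence $\nu_v=(b_v^i)$ and $\delta_v^1=(c_{(v,e)})=(1)$, so by Definition \ref{def:4.2}
$$
N_{\Gamma,v}=\langle(-1)^{m_v}\lambda_{m_v}|(b_v^i),(1)\rangle_{g_v,\,b_v^i[E_i]}^{X_k'/E}.
$$
Because $g_v=m_v$, this is exactly an invariant of the shape treated in Proposition \ref{prop:excep1} in the case $(d_v\cdot[L],g_v-m_v,l(\mu_2))=(0,0,1)$ with $\mu_1=\emptyset$, $\mu_2=(b_v^i)$; thus $N_{\Gamma,v}$ is the coefficient of $u^{2g_v-1}$ in $\tfrac{(-1)^{b_v^i-1}}{b_v^i}\tfrac{1}{2\sin(b_v^i u/2)}$, which is nonzero for \emph{every} $b_v^i\geq1$. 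Consequently the constraint $b_v^i=1$ must come from the rest of $\Gamma$: a graph $\Gamma$ contributes nontrivially to $(\ref{eq:4.2})$ precisely when $\prod_{v}N_{\Gamma,v}\neq0$, i.e. when every vertex factor is nonzero, so it suffices to force the vanishing of a \emph{neighbouring} factor once $b_v^i\geq2$.

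Next I would analyse the neighbour. The end $e$ joins $v$ to a vertex $v'$ with $j_{v'}=n$ lying in the component $\nl_n$, and the diagonal splitting in the degeneration formula (Proposition \ref{prop:4.1}) forces the opposite half-edge class $c_{(v',e)}=pt\in H^2(Z_n^+)$. Geometrically the component at $v$ is supported on $E_i$ and meets $E\simeq Z_n^+$ only at $E\cap E_i$, so the marking of $v'$ on $Z_n^+$ coming from $e$ carries contact order $b_v^i$ \emph{together with} a point insertion. The key step is to show that, once $b_v^i\geq2$, such a simultaneously highly tangent and point-constrained marking on $Z_n^+$ annihilates the $\nl_n$-invariant $N_{\Gamma,v'}$ whenever the remaining graph is nonempty. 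This is where the hypothesis $d\neq l[E_i]$ enters: since $d$ is not concentrated on a single exceptional class, deleting the $E_i$-tail $\{v,e\}$ leaves a connected subgraph of strictly positive residual class, so $v'$ and the rest genuinely carry curve class and the point condition on $Z_n^+$ is an honest constraint rather than a free marking. I would then run the constraint/dimension bookkeeping at $v'$ using Bousseau's explicit evaluation of the relative invariants of $\nl$ in \cite[Theorem 4.4]{bou2021}, isolating the factor governing the $Z_n^+$-marking and checking that a contact order $\geq2$ paired with a codimension-$2$ point insertion there forces $N_{\Gamma,v'}=0$. This is the content of adapting \cite[Corollary 5.4]{bru2015} to the present relative setting, and combined with the previous paragraph it yields $b_v^i=1$, i.e. $d_v=[E_i]$.

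The hard part will be exactly this global vanishing step, because the local factor $N_{\Gamma,v}$ is a genuine nonzero local $\pb^1$ multiple-cover contribution and so cannot be the source of the vanishing. The whole weight of the argument is therefore to show $N_{\Gamma,v'}=0$ for $b_v^i\geq2$: one must track how replacing $b_v^i$ weight-$1$ incidences on $Z_n^+$ by a single weight-$b_v^i$ incidence changes the virtual dimension of $\overline M(\nl_n/Z_n^+\cup Z_n^-,\dots)$ and verify that the superimposed point insertion over-determines the corresponding $\nl$-invariant. I expect the cleanest execution is to read this off directly from the closed formulas of \cite{bou2021}, with the hypothesis $d\neq l[E_i]$ used only to guarantee that the relevant $\nl_n$-factor is not vacuous; if a purely dimension-theoretic argument does not close, the fallback is Brugall\'e's genericity reasoning, transported through the degeneration formula as in \cite[Corollary 5.4]{bru2015}.
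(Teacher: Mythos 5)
Your first paragraph is fine and agrees with the paper's opening step: the unique edge at $v$ has weight $b_v^i$, Proposition \ref{prop:excep1} forces $\mu_1=\emptyset$ (hence $c_{(v,e)}=1$) and $g_v=m_v$, and the local factor $N_{\Gamma,v}$ is a multiple-cover contribution that is nonzero for \emph{every} $b_v^i\geq1$, so no vanishing can be extracted at $v$ itself. The problem is the mechanism you propose next. The claim that a relative marking on $Z_n^+$ with contact order $b_v^i\geq2$ paired with the complementary point class annihilates the neighbouring $\nl_n$-factor is false. If $v'$ is a non-distinguished vertex, the relevant factor is the degree-$b_v^i$ multiple cover of a fiber, with contact order $b_v^i$ on both $Z_n^+$ and $Z_n^-$, point class on one side and identity on the other; this is exactly the nonvanishing case of \cite[Lemma 5.1]{bou2021} underlying the effective vertices of type $(4)$ in Definition \ref{def:4.3}, and it equals $1/b_v^i\neq0$. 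If instead $v'$ is the distinguished vertex of $\nl_n$, it may be a floor of class $[Z_n^+]+l[F_n]$, and by \cite[Theorem 4.4]{bou2021} the floor invariants $N_{g,[Z^+]+l[F]}^{\mu,\nu}$ are nonzero for arbitrary contact partitions $\mu$, including parts $\geq2$. Thus a weight-$b_v^i$ edge can be propagated down the chain of $\nl_j$'s through perfectly nonvanishing local factors until it is absorbed by a floor: no dimension count at $v'$, nor at any other single vertex, yields the required vanishing, so the central step of your plan cannot be closed by the method you describe (nor "read off from the closed formulas of \cite{bou2021}", which give nonzero answers).

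The paper's proof is of an entirely different, global nature. It considers an actual one-parameter family of relative stable maps $f_t$ in $\xl_t$ degenerating to $f_0$, takes $\el$ to be the degeneration of the exceptional curve $E_i$ (the union of $E_i\subset X_k'$ with the chain of fibers over $E_i\cap E$), and counts the intersection points of $f_0(C_0^{\nl}\cup C_0^{X_k'})$ with $\el$ lying outside the nodes of the limit curve; this count equals $b_i+\sum_m l_m-a_i$, where $l_1,\ldots,l_{a_i}$ are the degrees of the covers of $E_i$ occurring in $f_0$. Since these intersections deform to intersections of $f_t(C_t)$ with $E_i$, whose total number is $d\cdot[E_i]=b_i$, one gets $\sum_m l_m\leq a_i$, and combined with $l_m\geq1$ this forces every $l_m=1$. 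The rigidity of $E_i$ and the hypothesis on $d$ enter this intersection-theoretic bookkeeping, not any dimension count of a single relative invariant. In short, what you relegate to a fallback sentence at the end (transporting Brugall\'e's argument from \cite[Corollary 5.4]{bru2015}) is the entire proof, and the mechanism you present as the main line of attack fails.
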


\begin{proof}
Let $N_{\Gamma,v}$ be a non-zero factor appearing in the right-hand of equation (\ref{eq:4.2}), 
and suppose that the homology class $d_v=b_v^i[E_i]$ for some $i\in\{1,\ldots,k\}$.
It follows from Proposition \ref{prop:excep1} that $g_v=m_v$ and $c_{(v,e)}=1$ for the edge $e$ adjacent to $v$.
Now we show that $b_v^i=1$. Let 
$$
f_t:(C_t,x_1(t),\ldots,x_n(t),y_1(t),\ldots,y_{l(\mu_1)}(t),\ldots,y_{l(\mu)}(t))\to \xl_t[r]|Z_r^-
$$
be a relative stable map defining a point in the moduli space $\overline M_{g,n}(\xl_t|E,d,\vec\mu)$. 
When $t\to0$, the relative stable map $f_t$ degenerates to $f_0$ 
that consists of several relative stable maps $f_0^1,\ldots,f_0^m$ to expanded spaces of $X_k'$, or $\nl_n$, $\dots$, or $\nl_1$. 
Suppose that $f_0$ contains $a_i$ relative stable maps 
$f_0^{s_j}:C_0^{s_j}\to X_k'[r_{j}]|Z_{r_{j}}^-$ in $\overline{M}_{g,0}(X_k'|E,l_j[E_i],(l_j))$, where $j=1,\ldots,a_i$. 
From $l_j\geq1$, we get $a_i\leq\sum_{j=1}^{a_i}l_j$.
Since the exceptional curve $E_i$ is rigid in $X_k'$, every stable map of class $l[E_i]$ factors through $E_i$.
Let $C_0^\nl$ denote the union of all irreducible components of $C_0:=\lim\limits_{t\to0}C_t$ that are mapped to the expanded space of $\nl_n,\ldots,\nl_1$ by relative stable maps in $f_0$.
We denote by $C_0^{X_k'}$ the union of irreducible components of $C_0$ 
that are mapped to expanded space of $X_k'$ by relative stable maps in $f_0$, 
and the homology classes they represent are not multiples of any $[E_i]$.

Suppose that $d=a[L]-\sum_{j=1}^kb_j[E_j]$ and $(\pi^-)_*\circ(f_0)_*([C_0^\nl])=c[Z_n^+]$,
where $\pi^-:\nl_n[t_n]\cup\cdots\cup\nl_1[t_1]\to Z_{n}^+$ is the projection from the expanded spaces to the section $Z_n^+$. 
Then we have
$$
(\pi^+)_*\circ(f_0)_*([C_0^{X_k'}])=(a-2c)[L]-\sum_{j\neq i}(b_j-c)[E_j]-(b_i-c+\sum_{m=1}^{a_i}l_m)[E_i],
$$
where $\pi^+:X_k'[r]\to X_k'$ is the projection from the expanded space of $X_k'$ to $X_k'$.
In the degeneration $\xl_t\to\xl_0$, the exceptional curve $E_i$ degenerates to the union $\el$ of $E_i$ 
and the fiber in $\nl_n[t_n]\cup\cdots\cup\nl_1[t_1]$ passing through $E_i\cap E$. 
The intersection point of two different irreducible components in $C_0$
are mapped to a transversal intersection point in $f_0(C_0)$.
We denote by $N(f_0)$ the set of such transversal intersection points in $f_0(C_0)$.
We consider the sum of multiplicity of intersections over intersection points of $f_0(C_0^\nl\cup C_0^{X_k'})$ 
with $\el$ and not in the set $N(f_0)$.
This number is equal to
$$
b_i-c_1+\sum_{m=1}^{a_i}l_m+(c_1-a_i)=b_i+\sum_{m=1}^{a_i}l_m-a_i.
$$
Note that the $-a_i$ in the above formula corresponds to the $a_i$ intersection points of the irreducible components in $C_0$ mapped to $\el$ with the irreducible components in $C_0^\nl\cup C_0^{X_k'}$.
On the other hand, all these intersections are deformed from $\xl_t$, hence we have
$$
d\cdot[E_i]=b_i\geq b_i+\sum_{m=1}^{a_i}l_m-a_i.
$$
Therefore, we obtain that $a_i=\sum_{m=1}^{a_i}l_m$, and the conclusion is proved.
\end{proof}

\subsection{Floor diagrams relative to a conic from decorated weighted graphs}
\label{subsec:4.3}
In this section, we apply the vanishing results about relative Gromov-Witten invariants 
to the degeneration formula (\ref{eq:4.2}) and obtain floor diagrams relative to a conic from decorated weighted graphs.

Recall that we always assume $d\cdot[L]>0$, $d\cdot[E]\geq0$ and $n=d\cdot[L]-1+g'+l(\mu_2)>0$.
We describe the vertices in a decorated weighted graph $\Gamma\in\gl^{\vec\mu_1,\vec\mu_2}_{g,g',d,n}$ corresponding to non-vanishing relative invariants as follows.
\begin{definition}
\label{def:4.3}
Let $\Gamma\in\gl^{\vec\mu_1,\vec\mu_2}_{g,g',d,n}$ be a decorated weighted graph, and $v\in\vt^\circ(\Gamma)$ be a vertex in $\Gamma$. The vertex $v$ is called \textit{effective} if $v$ is characterized by one of the following cases.
\begin{enumerate}
    \item $j_v=n+1$, $d_v=[L]$, $\nu_v=(1,1)$ or $(2)$, $m_v=g_v=0$. For any edge $e$ adjacent to $v$, the half-edge $(v,e)$ has $c_{(v,e)}=pt$.
    \item $j_v=n+1$, $d_v=[L]-[E_i]$ for some $i\in\{1,\ldots,k\}$, $\nu_v=(1)$, $m_v=g_v=0$. For the edge $e$ adjacent to $v$, the half-edge $(v,e)$ has $c_{(v,e)}=pt$.
    \item $j_v=n+1$, $d_v=[E_i]$ for some $i\in\{1,\ldots,k\}$, $\nu_v=(1)$, $m_v=g_v$. For the edge $e$ adjacent to $v$, the half-edge $(v,e)$ has $c_{(v,e)}=1$.
    \item $1\leq j_v\leq n$, $v\neq v_{j_v}$, $d_v=l_v[F_{j_v}]$, $\mu_v=\nu_v=(l_v)$, one of the edges incident to $v$ has $c_{(v,e)}=1$ and the other has $c_{(v,e')}=pt$, and $m_v=g_v=0$.
    \item $1\leq j_v\leq n$, $v=v_{j_v}$, $d_v=l_v[F_{j_v}]$, $\mu_v=\nu_v=(l_v)$, both edges incident to $v$ have $c_{(v,e)}=1$, and $m_v=g_v=0$.
    \item $1\leq j_v\leq n$, $v=v_{j_v}$, $d_v=[Z_{j_v}^+]+l_v[F_{j_v}]$, both edges incident to $v$ have $c_{(v,e)}=pt$, and $m_v=g_v$.
\end{enumerate}
\end{definition}
Denote by $V_a(\Gamma)$ the set of effective vertices of $\Gamma$ characterized by case $(a)$, where $a=1,\ldots,6$. In particular, let $V_2^i\subset V_2$ be the subset of $V_2$ consisting of vertices decorated by the class $d_v=[L]-[E_i]$ for some $i\in\{1,\ldots,k\}$, and let $V_3^i\subset V_3$ be the subset of $V_3$ consisting of vertices decorated by the class $d_v=[E_i]$ for some $i\in\{1,\ldots,k\}$. 
Denote by $V_1^i(\Gamma)$ the subset of vertices in $V_1(\Gamma)$ that are adjacent to weighted $i$ edges, $i=1,2$.
Let $\overline\gl^{\vec\mu_1,\vec\mu_2}_{g,g',d,n}$ be the subset of decorated weighted graphs $\Gamma\in\gl^{\vec\mu_1,\vec\mu_2}_{g,g',d,n}$ such that the vertices in $\vt^\circ(\Gamma)$ are all effective vertices. 

\begin{lemma}
\label{lem:4.01}
Let $\Gamma$ be a decorated weighted graph in $\overline\gl^{\vec\mu_1,\vec\mu_2}_{g,g',d,n}$. 
Then, we have
$$
d\cdot[E_i]=|V_2^i|+|V_6|-|V_3^i|.
$$
\end{lemma}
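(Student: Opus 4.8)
The plan is to prove the identity by tracking the curve class through the degeneration $\pi:\xl\to\ab^1$ of Section \ref{sec:4.1} and reading off the coefficient of $[E_i]$ vertex by vertex. The essential input is conservation of the homology class. Since $\xl$ is built from the degeneration of $X_k'$ to the normal cone of $E$, there is a proper birational morphism $c:\xl\to X_k'\times\ab^1$ contracting the exceptional loci, an isomorphism over every $t\neq0$; on the special fiber the induced map $c_0:\pi^{-1}(0)\to X_k'$ is the identity on the $X_k'$-component and the $\pb^1$-bundle projection $\nl_j\to E\subset X_k'$ on each bubble. As each $C_t$ maps to a curve of class $d$, properness gives $c_0{}_*[C_0]=d$ for the total limit curve $C_0$ in $\pi^{-1}(0)$.

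First I would compute the pushforward of the class of each component. On $\nl_j$ the projection sends a fiber $F_j$ to a point and a section to $E$, so $c_0{}_*[F_{j}]=0$ and $c_0{}_*[Z^+_{j}]=[E]$; on the $X_k'$-component $c_0$ is the identity. Hence a vertex $v$ with $j_v=n+1$ and class $d_v$ contributes $d_v$, while a vertex $v$ with $1\le j_v\le n$ and $d_v=h_v[Z^+_{j_v}]+l_v[F_{j_v}]$ contributes $h_v[E]$. Since the class of $C_0$ is the sum of the classes of its components and $c_0{}_*$ is additive,
\[
d=\sum_{\substack{v\in\vt^\circ(\Gamma)\\ j_v=n+1}}d_v+\sum_{\substack{v\in\vt^\circ(\Gamma)\\ 1\le j_v\le n}}h_v[E].
\]

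Next I would intersect both sides with $[E_i]$, using $[L]\cdot[E_i]=0$, $[E_{i'}]\cdot[E_i]=-\delta_{i'i}$, and, recalling $[E]=2[L]-\sum_{j=1}^k[E_j]$, the value $[E]\cdot[E_i]=1$. It then remains to evaluate each type of effective vertex from Definition \ref{def:4.3}. Vertices in $V_1$ (class $[L]$) and in $V_4,V_5$ (class $l_v[F_{j_v}]$, so $h_v=0$) contribute $0$. A vertex of $V_2$ with $d_v=[L]-[E_{i'}]$ contributes $-[E_{i'}]\cdot[E_i]=\delta_{i'i}$, so only $V_2^i$ counts, giving $+|V_2^i|$; a vertex of $V_3$ with $d_v=[E_{i'}]$ contributes $[E_{i'}]\cdot[E_i]=-\delta_{i'i}$, so only $V_3^i$ counts, giving $-|V_3^i|$; and each vertex of $V_6$ has $h_v=1$ and contributes $[E]\cdot[E_i]=1$, giving $+|V_6|$. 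Summing yields $d\cdot[E_i]=|V_2^i|+|V_6|-|V_3^i|$.

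The hard part will be justifying the class-conservation step cleanly, namely that the bubbles $\nl_j$ contract to $E$ with the stated pushforward and that the exceptional curves $E_i\subset X_k'$ behave as expected under the limit; recall from the proof of Proposition \ref{prop:4.2} that $E_i$ degenerates to $E_i$ together with a fiber of some $\nl_j$ through $E_i\cap E$. I expect this to follow directly from the explicit structure of the degeneration to the normal cone already used in Section \ref{sec:4.1} and in the proof of Proposition \ref{prop:4.2}, so that the remaining work is exactly the intersection-number bookkeeping recorded above.
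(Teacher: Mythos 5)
Your proof is correct and takes essentially the same approach as the paper: the paper's entire proof is the homology-class decomposition
$d=|V_1|[L]+\sum_{i=1}^k|V_2^i|([L]-[E_i])+\sum_{v\in V_3}d_v+|V_6|[E]$,
which is exactly the relation you derive (with fuller justification via the contraction of the degeneration and pushforward of classes) before intersecting with $[E_i]$. The extra detail you supply on class conservation is harmless; the bookkeeping of intersection numbers matches the paper's implicit computation.
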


\begin{proof}
The equality follows from the following relation between homology classes:
$$
d=|V_1|[L]+\sum_{i=1}^k|V_2^i|([L]-[E_i])+\sum_{v\in V_3}d_v+|V_6|[E].
$$
\end{proof}

\begin{lemma}
\label{lem:4.02}
Let $\Gamma$ be a decorated weighted graph in $\overline\gl^{\vec\mu_1,\vec\mu_2}_{g,g',d,n}$. 
Let $c$ be a chain of edges in $\Gamma$ such that the endpoints of $c$ are not in $V_4\cup V_5$
and the edges in $c$ are connected by bivalent effective vertices in $V_4$ or $V_5$.
Then, we have:
\begin{enumerate}
    \item[$(1)$] If the endpoints of $c$ are in one of the following two cases:
    \begin{itemize}
        \item one is in $V_6$ and the other is in $V_1\cup V_2\cup V_6$;
        \item one is in $\vt^{\vec\mu_2}(\Gamma)$
        and the other is in $V_1\cup V_2\cup V_6$;
    \end{itemize}
    the chain $c$ contains exactly one vertex in $V_5$.
    \item[$(2)$] If the endpoints of $c$ do not satisfy any condition in the case $(1)$, 
    the chain $c$ does not contain a vertex in $V_5$.
    \item[$(3)$] If the chain $c$ has one endpoint in $V_3$, then the other endpoint of $c$ is in $V_6$, and $c$ is a chain consisting of weighted 1 edges.
    Moreover, different vertices in $V_3^i$ are connected to different vertices in $V_6$ by such a chain $c$.
\end{enumerate}
\end{lemma}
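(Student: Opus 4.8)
The plan is to analyze the structure of chains of bivalent effective vertices by carefully bookkeeping the cohomology-class decorations $c_{(v,e)} \in \{1, pt\}$ and how they propagate along edges. The key observation is that for each inner edge $e$, condition (7) of Definition \ref{def:4.1} forces exactly one of the two half-edges at $e$ to carry $c_{(v,e)}=pt$ and the other to carry $c_{(v',e)}=1$; this is a \emph{matching} constraint on edges. For the bivalent vertices constituting the interior of the chain $c$, I would record the local pattern of these decorations: a vertex in $V_4$ has one adjacent half-edge labeled $1$ and the other labeled $pt$ (so it ``passes the $pt$ through''), whereas a vertex in $V_5$ has \emph{both} adjacent half-edges labeled $1$ (so it ``absorbs'' or ``creates'' two $1$'s). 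Thus traversing the chain and tracking which side of each edge carries $pt$ becomes a parity/counting argument: each $V_4$ vertex preserves the alternation of $1/pt$ along the chain, while each $V_5$ vertex breaks it.

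First I would set up the local decoration data at the two endpoints of $c$, which by hypothesis lie in $\vt^\circ(\Gamma)\setminus(V_4\cup V_5)$ or in $\vt^{\vec\mu_2}(\Gamma)$. Using Definition \ref{def:4.3}, a $V_6$ endpoint has $c_{(v,e)}=pt$ on its chain-adjacent half-edge, as does a $V_1$ or $V_2$ endpoint (cases (1),(2) of Definition \ref{def:4.3}); a $V_3$ endpoint has $c_{(v,e)}=1$ (case (3)); and a $\vt^{\vec\mu_2}$ leaf carries $c_{(v,e)}=pt$ by condition (7) of Definition \ref{def:4.1}. For part (1), when both endpoints present a $pt$ on their chain-adjacent half-edge, I would argue that the matching constraint, combined with the fact that $V_4$ vertices preserve alternation while $V_5$ vertices flip it, forces an odd number of ``flips'' along the chain; the degree/weight constraints (a $V_5$ vertex in Definition \ref{def:4.3}(5) has both edges labeled $1$, which is exactly the configuration incompatible with a single internal alternation) will pin this number down to exactly one $V_5$ vertex. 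For part (2), the complementary endpoint configurations give an even number of flips, which the same weight bookkeeping forces to be zero.

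For part (3), the hypothesis that one endpoint is in $V_3$ (hence $d_v=[E_i]$, $\nu_v=(1)$, and $c_{(v,e)}=1$ on its adjacent edge, which by condition (6) of Definition \ref{def:4.1} must have weight $1$) should be pushed along the chain: since the adjacent half-edge carries $1$, matching forces the opposite half-edge to carry $pt$, which among the bivalent effective vertices can only be realized by a $V_4$ vertex (case (4): one edge $1$, one edge $pt$), never a $V_5$ vertex. Moreover a $V_4$ vertex has $d_v=l_v[F_{j_v}]$ with $\mu_v=\nu_v=(l_v)$, so the weight is preserved along the whole chain; combined with the weight-$1$ start this forces every edge in $c$ to have weight $1$. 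Tracing the chain to its far endpoint, the $1/pt$ matching eliminates $V_1,V_2$ and $\vt^{\vec\mu_2}$ as possible far endpoints (whose chain-adjacent half-edges would need to carry a decoration incompatible with the propagated $pt$), leaving only $V_6$, which has the correct $pt$ decoration. Finally, the ``different vertices in $V_3^i$ connect to different vertices in $V_6$'' clause follows from an injectivity argument: the homology relation of Lemma \ref{lem:4.01} together with the local structure at a $V_6$ vertex (whose two chain-adjacent half-edges both carry $pt$, Definition \ref{def:4.3}(6)) shows a $V_6$ vertex can terminate at most one such all-weight-$1$, $V_3$-initiated chain, since a second would contradict the $pt/pt$ endpoint decoration.

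The main obstacle I expect is the bookkeeping in part (1): converting the qualitative statement ``an odd number of $1/pt$ alternation flips'' into the precise conclusion ``exactly one $V_5$ vertex'' requires ruling out chains with three or more $V_5$ vertices. The cleanest route is probably to observe that each $V_5$ vertex forces a transition from a $pt$-carrying edge on one side to a $pt$-carrying edge on the other (both half-edges being $1$ means both incident edges have their $pt$ on the \emph{opposite} half-edge), so consecutive $V_5$ vertices would require two consecutive edges both carrying $pt$ on the chain-interior side --- a configuration I would show is excluded by the matching constraint once the chain is read from a fixed endpoint. Making this exclusion airtight, rather than merely parity-correct, is the delicate step, and I would verify it by an explicit local case analysis at each bivalent vertex using the weight partitions in condition (6) of Definition \ref{def:4.1}.
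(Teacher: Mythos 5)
Your treatment of items $(1)$ and $(2)$ is correct, and it is in substance the argument that the paper itself omits (it defers to \cite[Section 5.2]{bru2015} and \cite[Lemma 5.7]{bou2021}): the dimension-based vanishing results are already encoded in the list of effective vertices of Definition \ref{def:4.3}, and the $1/pt$ matching of condition $(7)$ of Definition \ref{def:4.1} then forces exactly one $V_5$ vertex when both endpoints present $pt$, and none otherwise. The step you flag as delicate (ruling out three or more $V_5$ vertices) closes exactly as you suggest: once a $V_5$ occurs, every later bivalent vertex acquires a $pt$ half-edge from the matching and is therefore forced into $V_4$, so a second $V_5$ is impossible outright, not merely excluded by parity.

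Item $(3)$, however, contains two genuine gaps. First, decoration bookkeeping cannot eliminate $\vt^{\vec\mu_2}(\Gamma)$ as the far endpoint. Propagating from the $V_3$ vertex (whose half-edge carries $1$), each edge of the chain carries $pt$ on its lower side, every interior vertex is forced into $V_4$ with lower half-edge equal to $1$, and the final end therefore carries the trivial class on its curve-component side and $pt$ on the leaf side --- which is precisely the decoration of an end adjacent to a $\vt^{\vec\mu_2}$ leaf (its relative insertion is $\delta=1$). So a chain from $V_3$ to a $\mu_2$-leaf is perfectly consistent with all decorations, and your claimed incompatibility does not exist. What actually rules it out is connectivity: the interior vertices of $c$ are bivalent and both endpoints are leaves of $\Gamma$, so such a chain would be an entire connected component of $\Gamma$, while $d\cdot[L]>0$ forces $\Gamma$ to contain vertices carrying classes that are not multiples of the $[E_i]$, disjoint from $c$; this contradicts connectedness. (Similarly, $V_1$ and $V_2$ are excluded because a chain of fiber-class bivalent vertices strictly decreases the index $j$, so it cannot return to index $n+1$ --- again not a decoration argument.)

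Second, your injectivity argument fails. A $V_6$ vertex is a floor of class $[Z^+_{j_v}]+l_v[F_{j_v}]$ whose partitions $\mu_v,\nu_v$ may have many parts; the phrase ``both edges'' in Definition \ref{def:4.3}$(6)$ is not a valency-two restriction, and \emph{all} half-edges at a floor carry $pt$. Hence two weight-one chains issuing from two vertices of $V_3^i$ and ending at the same floor produce two $pt$-decorated half-edges there, which is entirely consistent with the decorations; and Lemma \ref{lem:4.01} is a global count that presupposes, rather than proves, injectivity. The actual obstruction is geometric: every edge of such a chain corresponds to a fiber over the single point $E_i\cap E$, so a common terminal floor would have to carry two relative marked points constrained to the same point of $Z^+_{j_v}$; since a curve of class $[Z^+_{j_v}]+l_v[F_{j_v}]$ meets each fiber exactly once, the corresponding relative invariant vanishes --- this is what the paper means by ``contradicts the degeneration formula''. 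Your proof needs both of these non-combinatorial inputs to close item $(3)$.
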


\begin{proof}
The proof of Lemma \ref{lem:4.02} follows from the degeneration formula $(\ref{eq:4.2})$ and the non-vanishing results about relative Gromov-Witten invariants: Remark \ref{prop:vanishing-X_k1}, Proposition \ref{prop:vanishing-X_k2}, Proposition \ref{prop:excep1}, \cite[Lemma 5.1]{bou2021} and \cite[Lemma 5.2]{bou2021}. The detailed analysis is the same as the analysis in \cite[Section 5.2]{bru2015} or the proof of \cite[Lemma 5.7]{bou2021}, so we omit the proof of items $(1)$, $(2)$ and refer the readers to \cite[Section 5.2]{bru2015} or the proof of \cite[Lemma 5.7]{bou2021}.

Let $c$ be a chain having an endpoint in $V_3$. If any vertex in $V_6$ is not an endpoint of $c$, the other endpoint of $c$ must be in $\vt^{\vec\mu_2}(\Gamma)$. 
By the statement $(2)$, $c$ does not contain a vertex in $V_5$. 
Note that we assume $d\cdot[L]>0$. Hence, there must be other vertices and edges disjoint with $c$ in $\Gamma$. This contradicts the fact that $\Gamma$ is connected. The weight of the edges in $c$ is obtained by Proposition \ref{prop:4.2}.
Suppose that two different vertices in $V_3^i$ are connected to a same vertices $v$ in $V_6$.
Then there are two ones in the partition $\mu_v$ (see Definition \ref{def:4.2}) corresponding to a same intersection point in $Z^+_v$.
This contradicts to the degeneration formula \cite{li2002}. Hence,
different vertices in $V_3^i$ are connected to different vertices in $V_6$ by a chain.
\end{proof}

From the degeneration formula, non-vanishing results about relative Gromov-Witten invariants (Remark \ref{prop:vanishing-X_k1}, Proposition \ref{prop:vanishing-X_k2}, Proposition \ref{prop:excep1}, \cite[Lemma 5.1]{bou2021} and \cite[Lemma 5.2]{bou2021}), and Proposition \ref{prop:4.2}, one knows that only graphs in $\overline\gl^{\vec\mu_1,\vec\mu_2}_{g,g',d,n}$ have non-trivial contribution to the formula (\ref{eq:4.2}).

Floor diagrams relative to a conic were constructed by Brugall\'e in \cite[Section 5.2]{bru2015}. 
Now, we adapt Brugall\'e's construction to our setting. 
Let $\Gamma$ be a decorated weighted graph in $\overline\gl^{\vec\mu_1,\vec\mu_2}_{g,g',d,n}$.
For every $1\leq i\leq k$, we denote by $A_i(\Gamma)$ the set composed of couples $(v,i)$ such that either $v\in V_2^i$, or $v\in V_6$ and for any vertex $v'\in V_3^i$, the two vertices $v,v'$ are not endpoints of any chains of edges in $\Gamma$ connected by bivalent vertices in $V_4$.

\begin{construction}[{\cite[Section 5.2]{bru2015}}]
\label{const:1}
Let $\Gamma$ be a graph in $\overline\gl^{\vec\mu_1,\vec\mu_2}_{g,g',d,n}$. We use Brugall\'e's construction to give an oriented weighted graph $\dl_\Gamma$ as follows:
\begin{itemize}
    \item Vertices in $\vt^\infty(\dl_\Gamma)$ are ono-to-one correspondence with elements in the set $\left(\cup_{i=1}^kA_i(\Gamma)\right)\cup\vt^{\vec\mu_1}(\Gamma)\cup\vt^{\vec\mu_2}(\Gamma)$. Vertices corresponding to $(v,i)\in\cup_{i=1}^kA_i(\Gamma)$ are denoted by $v_{(v,i)}$, and the vertices corresponding to $v\in\vt^{\vec\mu_1}(\Gamma)\cup\vt^{\vec\mu_2}(\Gamma)$ are still denoted by $v$.
    \item $\vt(\dl_\Gamma)\setminus\vt^\infty(\dl_\Gamma)=V_1\cup V_2\cup V_6$.
    \item Edges in $\eg^\infty(\dl_\Gamma)$ are one-to-one correspondence with vertices in $\vt^\infty(\dl_\Gamma)$. Moreover, we have the followings.
    \begin{itemize}
        \item For every $i\in\{1,\ldots,k\}$, the edge $e_{(v,i)}$ corresponding to a couple $(v,i)\in A_i(\Gamma)$ is adjacent to $v_{(v,i)}$ and the vertex $v$. The edge $e_{(v,i)}$ is oriented from $v_{(v,i)}$ to $v$.
        The weight of $e_{(v,i)}$ is equal to $1$.
        \item Every edge $e_v$ corresponding to a vertex $v\in\vt^{\vec\mu_1}(\Gamma)$ (resp. $v\in\vt^{\vec\mu_2}(\Gamma)$) is adjacent to vertices $v$ and $v'$, where $v'$ is the unique vertex in $V_1\cup V_2\cup V_6$ which is connected to $v$ by a chain of edges in $\Gamma$ connected by bivalent effective vertices in $V_4$ (resp. $V_4\cup V_5$). The edge $e_v$ is oriented from $v$ to $v'$, and the weight of $e_v$ is equal to the weight of the leaf of $\Gamma$ adjacent to $v$.
    \end{itemize}
    \item Let $B$ be the set of chains of edges in $\Gamma$ connected by bivalent effective vertices in $V_4\cup V_5$ and with endpoints $v,v'\in\vt(\dl_\Gamma)\setminus\vt^{\infty}(\dl_\Gamma)$.
    Edges in $\eg(\dl_\Gamma)\setminus\eg^\infty(\dl_\Gamma)$ are one to one correspondence with chains in $B$. Let $e$ be an edge corresponding to a chain $c\in B$ with endpoints $v$ and $v'$, then $e$ is adjacent to $v$ and $v'$. If $j_v>j_{v'}$, the edge $e$ is oriented from $v'$ to $v$. Otherwise, $e$ is oriented from $v$ to $v'$. The weight of $e$ is the weight of the edges in the chain $c$.
\end{itemize}
\end{construction}
\noindent We denote by $\eg^{\vec\mu}(\dl)=\eg^{\vec\mu_1}(\dl)\cup\eg^{\vec\mu_2}(\dl)$ the set of edges in $\eg^\infty(\dl)$ which are adjacent to vertices in $\vt^{\vec\mu_1}\cup\vt^{\vec\mu_2}$.

\begin{lemma}[{\cite[Lemma 5.10]{bru2015}}]
\label{lem:4.1}
For every $\Gamma\in\overline\gl^{\vec\mu_1,\vec\mu_2}_{g,g',d,n}$, the oriented weighted graph $\dl_\Gamma$ constructed in Construction $\ref{const:1}$ is a floor diagram of degree $d\cdot[L]$ and genus $g'$.
\end{lemma}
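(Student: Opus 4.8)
The plan is to verify directly the five conditions of Definition \ref{def:2.1} for $\dl_\Gamma$, taking genus $g'$ and degree $d_\dl=d\cdot[L]$, using the bookkeeping built into $\overline\gl^{\vec\mu_1,\vec\mu_2}_{g,g',d,n}$ together with the effectivity constraints of Definition \ref{def:4.3} and the structural results of Lemmas \ref{lem:4.01} and \ref{lem:4.02}. Recall from Construction \ref{const:1} that the inner vertices of $\dl_\Gamma$ are exactly $V_1\cup V_2\cup V_6$, while $\vt^\infty(\dl_\Gamma)$ consists of the leaves $v_{(v,i)}$ attached to the markings in $\cup_i A_i(\Gamma)$ together with $\vt^{\vec\mu_1}(\Gamma)\cup\vt^{\vec\mu_2}(\Gamma)$; connectedness of $\dl_\Gamma$ is inherited from that of $\Gamma$. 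I would first settle acyclicity: every oriented edge of $\dl_\Gamma$ strictly increases the index $j_v\in\{1,\dots,n+1\}$ of its endpoints, since the inner edges arise from chains in $B$ running monotonically through the bivalent vertices of $V_4\cup V_5$ and are oriented from the lower-index to the higher-index endpoint, while the marking ends point into their floor. As the index increases along each oriented edge, no oriented cycle can exist.

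Next I would treat the genus (condition (2)). For an effective vertex one has $g_v=m_v$ in every case of Definition \ref{def:4.3}, both vanishing on $V_1,V_2,V_4,V_5$ and agreeing on $V_3,V_6$, so $\sum_{v\in\vt^\circ(\Gamma)}g_v=\sum_v m_v=g-g'$ by Definition \ref{def:4.1}(8); combined with $g_\Gamma+\sum_v g_v=g$ this forces $g_\Gamma=g'$, where $g_\Gamma=b_1(\Gamma)$. It then suffices to show $b_1(\dl_\Gamma)=b_1(\Gamma)$, which holds because Construction \ref{const:1} performs only operations preserving the cycle rank: contracting each chain of bivalent $V_4\cup V_5$ vertices to a single edge is a series reduction; the $V_3$-vertices together with the weight-$1$ chains attaching them injectively to $V_6$ (Lemma \ref{lem:4.02}(3)) form pendant subtrees that are deleted; and the marking leaves are pendant edges. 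Hence $b_1(\dl_\Gamma)=g'$.

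The core is the divergence analysis (conditions (3)--(5)). Each leaf has a single outgoing edge, so its divergence is $-w_e\le-1$. For the floors I would argue case by case. A vertex in $V_1$ or $V_2$ lies in $X_k'$ at the top index $n+1$, so all its edges are incoming; using $\nu_v=(1,1)$ or $(2)$ for $V_1$, and $\nu_v=(1)$ together with its single $A_i$-marking for $V_2$, one gets $\dv(v)=2$ in both cases, and since all edges are incoming these vertices are sinks, giving condition (4). For $v\in V_6$ with $d_v=[Z^+_{j_v}]+l_v[F_{j_v}]$, the bare contribution of the $\nl_{j_v}$-structure is $\dv(v)=d_v\cdot[Z^-_{j_v}]-d_v\cdot[Z^+_{j_v}]=l_v-((k-4)+l_v)=4-k$ (incoming from below minus outgoing above, via Definition \ref{def:4.1}(6)). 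The decisive bookkeeping is that for each $i\in\{1,\dots,k\}$ the vertex $v$ either connects upward to a unique $V_3^i$-vertex by a weight-$1$ chain, an edge that is dropped in $\dl_\Gamma$ and thus raises $\dv(v)$ by $1$, or else $(v,i)\in A_i(\Gamma)$, contributing an incoming weight-$1$ marking that also raises $\dv(v)$ by $1$; by the definition of $A_i(\Gamma)$ these alternatives are exactly complementary, so the $k$ corrections sum to $k$ and $\dv(v)=(4-k)+k=4$. This proves condition (3). Finally, the divergences over all vertices sum to $0$, so $\sum_{v\in\vt^\infty}\dv(v)=-(2|V_1|+2|V_2|+4|V_6|)$; intersecting the homology relation $d=|V_1|[L]+\sum_i|V_2^i|([L]-[E_i])+\sum_{v\in V_3}d_v+|V_6|[E]$ of Lemma \ref{lem:4.01} with $[L]$ and using $[E]=2[L]-\sum_i[E_i]$ yields $|V_1|+|V_2|+2|V_6|=d\cdot[L]$, so the leaves' divergences sum to $-2d\cdot[L]$, which is condition (5).

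I expect the main obstacle to be precisely the $V_6$ divergence count: one must verify rigorously that all down-edges and all non-$V_3$ up-edges of a $V_6$-vertex survive in $\dl_\Gamma$, and that the dropped weight-$1$ chains to $V_3$ together with the $A_i$-markings partition $\{1,\dots,k\}$ into two groups each contributing exactly $+1$. This is where the injectivity in Lemma \ref{lem:4.02}(3) and the exact definition of $A_i(\Gamma)$ must be combined carefully; the remaining conditions are then routine bookkeeping.
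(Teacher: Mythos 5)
Your proposal is correct, and it follows the same overall strategy as the paper's proof: both rest on Construction \ref{const:1}, Definitions \ref{def:4.1} and \ref{def:4.3}, and Lemmas \ref{lem:4.01}--\ref{lem:4.02}, and both obtain the genus by combining $g_v=m_v$ for effective vertices (forcing $g_\Gamma=g'$) with the fact that the construction preserves the first Betti number. The difference is one of completeness and of routing for condition $(5)$. The paper's proof is terse: it computes only the leaf-divergence sum directly from Lemma \ref{lem:4.01}, namely $\sum_{v\in\vt^\infty(\dl_\Gamma)}\dv(v)=-|\mu_1|-|\mu_2|-\sum_{i}|A_i(\Gamma)|=-2d\cdot[L]$, and the identity $-|\vt(\dl_\Gamma)|+|\eg(\dl_\Gamma)|=-|\vt(\Gamma)|+|\eg(\Gamma)|$, declaring conditions $(1)$, $(3)$, $(4)$ immediate from the construction. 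You instead verify those conditions explicitly --- in particular the $V_6$ divergence count $(4-k)+k=4$, where for each $i$ the dropped weight-one chain to a unique $V_3^i$-vertex and the added marking edge from $A_i(\Gamma)$ are exactly complementary alternatives (this is the content the paper leaves implicit, and your use of the injectivity in Lemma \ref{lem:4.02}(3) there is exactly what is needed) --- and you then deduce condition $(5)$ from the floor divergences, the zero-sum property of divergences, and intersection of the class decomposition with $[L]$, rather than summing the leaf divergences directly. The two derivations of $(5)$ are equivalent dual bookkeepings of Lemma \ref{lem:4.01}; your version buys a self-contained verification of the steps the paper (following Brugall\'e) asserts without proof, at the cost of first establishing conditions $(3)$--$(4)$.
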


\begin{proof}
We only need to compute the first Betti number $b_1(\dl_\Gamma)$ and $\sum_{v\in\vt^\infty(\dl_\Gamma)}\dv(v)$, since the other properties of a floor diagram follow immediately from Construction $\ref{const:1}$. From Lemma \ref{lem:4.01}, we have
$$
\sum_{v\in\vt^\infty(\dl_\Gamma)}\dv(v)=-|\mu_1|-|\mu_2|-|V_2|-k|V_6|-\sum_{v\in V_3}d_v\cdot(\sum_{i=1}^k[E_i])=-2d\cdot[L].
$$
From the Definition \ref{def:4.1}, $g_\Gamma+\sum_{v\in\vt^\circ(\Gamma)}g_v=g$.
From Construction $\ref{const:1}$, effective vertices in $V_4(\Gamma)\cup V_5(\Gamma)$ are not included in $\dl_\Gamma$,
and $-|\vt(\dl_\Gamma)|+|\eg(\dl_\Gamma)|=-|\vt(\Gamma)|+|\eg(\Gamma)|$.
Hence, we obtain that
$$
g_{\dl_\Gamma}=-|\vt(\dl_\Gamma)|+|\eg(\dl_\Gamma)|+1=-|\vt(\Gamma)|+|\eg(\Gamma)|+1=g_\Gamma=g'.
$$
\end{proof}

The floor diagram $\dl_\Gamma$ of genus $g'$ and degree $d\cdot[L]$ admits a marking of type $(\vec\mu_1,\vec\mu_2)$. Recall that $d\in H_2(X_k';\zb)$ is a homology class, and $\vec\mu=(\vec\mu_1,\vec\mu_2)$ is a partition of $d\cdot[E]$.
We put $A_0=\{1,\ldots,n+l(\mu_1)\}$, where $n=d\cdot[L]-1+g'+l(\mu_2)$. Let $A_1,\ldots,A_k$ be some disjoint sets such that $|A_i|=d\cdot[E_i]$ for any $i\in\{1,\ldots,k\}$.

\begin{definition}
\label{def:4.4}
We define a map $m_{\Gamma}:A_0\cup\left(\cup_{i=1}^kA_i\right)\to\dl_\Gamma$ as follows:
\begin{itemize}
    \item For each $i\in A_0$ and $i\geq l(\mu_1)+1$, if the distinguished vertex $v_{i-l(\mu_1)}$ is contained in $V_6$ we set $m_{\Gamma}(i)=v_{i-l(\mu_1)}$, otherwise, we set $m_{\Gamma}(i)$ to be the edge in $\dl$ which corresponds to the unique chain $c$ of $\Gamma$ containing the distinguished vertex $v_{i-l(\mu_1)}$.
    \item The restriction of $m_\Gamma$ on $A_i$ is a bijection to the set 
    $\{e_{(v,i)}|(v,i)\in A_i(\Gamma)\}$ for every $i\in\{1,\ldots,k\}$.
    \item For $1\leq j\leq l(\mu_1)$, we set $m_\Gamma(j)=\bar v$, where $\bar v\in\vt^{\vec\mu_1}(\Gamma)$ is the unique leaf in $\Gamma$ which is labeled by $\mu_1^j$.
\end{itemize}
\end{definition}

\begin{lemma}[{\cite[Lemma 5.11]{bru2015}}]
\label{lem:4.2}
For every floor diagram $\dl_\Gamma$ of degree $d\cdot[L]$ and genus $g'$, the map
$m_{\Gamma}:A_0\cup\left(\cup_{i=1}^kA_i\right)\to\dl_\Gamma$ defined in Definition $\ref{def:4.4}$ is a $d$-marking of $\dl_\Gamma$ of type $(\vec\mu_1,\vec\mu_2)$.
\end{lemma}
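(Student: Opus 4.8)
The plan is to verify directly that $m_\Gamma$ fulfils each of the six defining conditions of a $d$-marking of type $(\vec\mu_1,\vec\mu_2)$ in Definition \ref{def:2.2}, applied to the oriented weighted graph $\dl_\Gamma$, which Lemma \ref{lem:4.1} already identifies as a floor diagram of degree $d\cdot[L]$ and genus $g'$. First I would check that $m_\Gamma$ is well defined, which is a cardinality matter: the set $A_0$ has $n+l(\mu_1)=d\cdot[L]-1+g'+l(\mu_1)+l(\mu_2)$ elements, agreeing with the size prescribed in Definition \ref{def:2.2} after replacing $g$ by $g'$; and the restriction of $m_\Gamma$ to each $A_i$ is required to be a bijection onto $\{e_{(v,i)}\mid (v,i)\in A_i(\Gamma)\}$, so I must show $|A_i(\Gamma)|=d\cdot[E_i]$. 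This follows from Lemma \ref{lem:4.01} together with Lemma \ref{lem:4.02}(3): the $V_6$-vertices excluded from $A_i(\Gamma)$ are precisely those joined to a vertex of $V_3^i$ by a chain of weight-one edges, and distinct $V_3^i$-vertices attach to distinct $V_6$-vertices, so the number of excluded vertices is $|V_3^i|$ and $|A_i(\Gamma)|=|V_2^i|+|V_6|-|V_3^i|=d\cdot[E_i]$.

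Conditions (1), (4), (5) are then read off from Construction \ref{const:1}. Injectivity holds because the distinguished vertices $v_1,\dots,v_n$, the couples in $\cup_{i=1}^k A_i(\Gamma)$, and the $\vt^{\vec\mu_1}$-leaves give pairwise distinct targets, while the increasing property follows from the compatibility of the natural order on $A_0$ with the partial order of $\dl_\Gamma$: the orientation in Construction \ref{const:1} runs from lower to higher index $j_v$, and $m_\Gamma$ sends $l(\mu_1)+j\in A_0$ to the floor or edge carrying the distinguished vertex $v_j$ of index $j$. The exclusion of degree-one floors is immediate because $\vt(\dl_\Gamma)\setminus\vt^\infty(\dl_\Gamma)=V_1\cup V_2\cup V_6$, the degree-one floors are exactly the $V_1$- and $V_2$-vertices, and among vertices $m_\Gamma$ hits only $V_6$-vertices, which are degree-two floors. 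Condition (5) is the statement that the end attached to the $\vt^{\vec\mu_1}$-leaf $m_\Gamma(i)=\bar v$ has weight $\mu_1^{(i)}$, which is built into Construction \ref{const:1}; condition (4) follows because each eligible vertex of $V_2^i\cup V_6$ contributes exactly one couple $(v,i)$, hence exactly one end $e_{(v,i)}$, so each floor is adjacent to at most one end meeting the image of $A_i$.

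The core of the proof is conditions (2), (3), and (6), all of which are governed by the chain analysis of Lemma \ref{lem:4.02}. Here I would run through the three types of vertex of $\vt^\infty(\dl_\Gamma)$ described in Construction \ref{const:1}. For $w=v_{(v,i)}$ coming from a couple $(v,i)\in A_i(\Gamma)$, the weight-one end $e_{(v,i)}$ attached to $w$ lies in the image of $m_\Gamma|_{A_i}$ while $w$ itself does not; since $w\in\vt^\infty(\dl_\Gamma)$, this furnishes the marking of $A_i$ at $\vt^\infty(\dl_\Gamma)$ demanded by the first clause of (3). For $w\in\vt^{\vec\mu_1}(\Gamma)$, the vertex $w$ is hit by $m_\Gamma|_{\{1,\dots,l(\mu_1)\}}$ but its adjacent end is not, because by Lemma \ref{lem:4.02}(1)--(2) a chain issuing from a $\vt^{\vec\mu_1}$-leaf contains no $V_5$-vertex and hence no distinguished vertex. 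For $w\in\vt^{\vec\mu_2}(\Gamma)$, the vertex is not hit, whereas Lemma \ref{lem:4.02}(1) guarantees that the chain from such a leaf carries exactly one $V_5$-vertex, which is a distinguished vertex, so the associated end is hit by $m_\Gamma|_{A_0}$. This single case division yields condition (2) (exactly one of $\{w,e_w\}$ is in the image), the identity $m_\Gamma(\{1,\dots,l(\mu_1)\})=m_\Gamma(A_0)\cap\vt^\infty(\dl_\Gamma)$ of (3), and the count of exactly $l(\mu_2)$ ends of $\eg^\infty(\dl_\Gamma)$ in $m_\Gamma(A_0)$ with the prescribed weights $\mu_2^{(1)},\dots,\mu_2^{(l(\mu_2))}$ of (6).

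The step I expect to be the main obstacle is the simultaneous bookkeeping behind (6) and the $\vt^\infty$-part of (3): one must match, with no over- or under-counting, the distinguished vertices lying in $V_5$ against the chains terminating at $\vt^{\vec\mu_2}$-leaves, while separating them both from the distinguished vertices lying in $V_6$ (which $m_\Gamma$ sends to floors rather than ends) and from chains whose unique $V_5$-vertex sits in the interior of $\dl_\Gamma$. The exact dichotomy of Lemma \ref{lem:4.02}(1)--(2) — specifying which pairs of endpoints force a $V_5$-vertex into a chain — is precisely what rules out the ambiguous cases, so once that dichotomy is invoked carefully the remaining verifications are the routine translations already carried out in \cite[Lemma 5.11]{bru2015}.
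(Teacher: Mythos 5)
Your proposal is correct and takes essentially the same route as the paper: the paper's own proof of Lemma \ref{lem:4.2} is literally the one-line assertion that it is a direct consequence of Lemma \ref{lem:4.01}, Lemma \ref{lem:4.02}, Definition \ref{def:4.1} and Construction \ref{const:1}, and your write-up simply carries out that verification condition by condition using exactly those ingredients (the cardinality count $|A_i(\Gamma)|=d\cdot[E_i]$, the chain dichotomy for conditions (2), (3), (6), and the construction data for (1), (4), (5)). The only wrinkle --- whether $m_\Gamma(A_i)$ consists of the leaves $v_{(v,i)}$ or of the ends $e_{(v,i)}$, which affects the literal reading of conditions (3)--(4) of Definition \ref{def:2.2} --- is an inconsistency internal to the paper itself (Definition \ref{def:4.4} and Definition \ref{def:additional} use ends, while Definition \ref{def:2.2} expects vertices; the two conventions carry the same information by condition (2)), and your handling of it is no looser than the paper's.
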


\begin{proof}
It is a direct consequence of Lemma \ref{lem:4.01}, Lemma \ref{lem:4.02}, Definition \ref{def:4.1} and Construction \ref{const:1}.
\end{proof}

\begin{definition}\label{def:additional}
A edge in a $d$-marked floor diagram $(\dl,m)$ is called an \textit{additional end} if it is in the image $m(\cup_{i=1}^kA_i)$, 
otherwise, it is called an \textit{original edge}. For any $v\in V_6$, we denote by $i_v$ the number of $i\in\{1,\ldots,k\}$ such that $(v,i)\notin A_i(\Gamma)$.
\end{definition}

\subsection{Relative Gromov-Witten invariants and refined counts}
In this section, we first express the relative Gromov-Witten invariants $N_{g,d,n}^{X_{k}'|E}(g-g',\mu_1,\mu_2)$ in terms of floor diagrams whose vertices are weighted by relative Gromov-Witten invariants of Hirzebruch surfaces $\nl$ and $X_k'$.
Then we show that the $q$-refined counts of floor diagrams are, after the change of variables $q=e^{iu}$, generating series of higher genus relative Gromov-Witten invariants with insertion of a Lambda class.

We put
\begin{equation}\label{eq:def-rel}
N_{g,[Z^+]+l[F]}^{\mu,\nu}:=\langle\mu,\delta^1|(-1)^{g}\lambda_{g};p|\nu,\delta^2\rangle_{g,[Z^+]+l[F]}^{\nl|(Z^+\cup Z^-)},
\end{equation}
where $p\in H^4(\nl;\zb)$ is Poincar\'e dual to a point in $\nl$, $\delta^1_i,\delta^2_j$ are Poincar\'e dual to points in $Z^+$ and $Z^-$, respectively.

\begin{lemma}
\label{lem:4.3}
Let $g'\in\zb_{\geq0}$, and $d\in H_2(X_k';\zb)$ be a homology class such that $d\cdot[L]>0$, $d\cdot[E]\geq0$. Suppose that $\vec\mu=(\vec\mu_1,\vec\mu_2)$ is an ordered partition of $d\cdot[E]$, 
and let $n:=d\cdot[L]-1+g'+l(\mu_2)$. 
Then relative Gromov-Witten invariants $N_{g,d,n}^{X_{k}'|E}(g-g',\mu_1,\mu_2)$ of $X_k'$ are given by:
\begin{equation}
\label{eq:4.21}
\begin{aligned}
&\sum_{g\geq g'}N_{g,d,n}^{X_{k}'|E}(g-g',\mu_1,\mu_2)u^{2g-2+l(\vec\mu_1)+l(\vec\mu_2)}\\
&=\sum_{\dl}\prod_{i=1}^{l(\vec\mu_2)}\mu_2^i\left(\prod_{e\in\eg^\circ(\dl)}w(e)^2\right)\left(\prod_{v\in V_6(\dl)}\sum_{g_v\geq0}N_{g_v,d_v}^{\mu_v,\nu_v}u^{2g_v-2+l(\mu_v)+l(\nu_v)}\right)\\
&\left(\frac{1}{2\sin{\frac{u}{2}}}\right)^{k|V_6|+|V_2|-\sum_{i=1}^kd\cdot[E_i]}\left(\prod_{v\in V_2(\dl)}u^{-1}\right)\\
&\left(\prod_{v\in V_1^2(\dl)}u^{-1}\cos\left(\frac{u}{2}\right)\right)\left(\prod_{v\in V_1^1(\dl)}u^{-1}2\sin\left(\frac{u}{2}\right)\right),
\end{aligned}
\end{equation}
where the sum over $\dl$ is taken over the equivalence classes of $d$-marked floor diagrams of genus $g'$ and type $(\vec\mu_1,\vec\mu_2)$,
$\mu_v$ is the partition having the weights of outgoing edges of $\dl$ incident to $v$ and $i_v$ ones as its entries, 
$\nu_v$ is the partition whose entries are the weights of original incoming edges in $\eg(\dl)$ incident to $v$.
\end{lemma}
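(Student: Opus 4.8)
The plan is to start from the degeneration formula of Proposition \ref{prop:4.1} and progressively simplify its right-hand side, substituting the explicit relative invariants computed earlier and reorganizing the sum over decorated graphs into a sum over marked floor diagrams via Construction \ref{const:1}. The first step is to invoke the vanishing and non-vanishing results (Remark \ref{prop:vanishing-X_k1}, Proposition \ref{prop:vanishing-X_k2}, Proposition \ref{prop:excep1}, Proposition \ref{prop:4.2}, and \cite[Lemmas 5.1, 5.2]{bou2021}) so that in (\ref{eq:4.2}) only graphs $\Gamma\in\overline\gl^{\vec\mu_1,\vec\mu_2}_{g,g',d,n}$ survive, that is, those all of whose inner vertices are effective in the sense of Definition \ref{def:4.3}. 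Multiplying both sides by $u^{2g-2+l(\vec\mu_1)+l(\vec\mu_2)}$ and summing over $g\geq g'$, I would then use the genus splitting $g=g'+\sum_{v\in\vt^\circ(\Gamma)}g_v$ together with $\sum_v m_v=g-g'$ to distribute the power of $u$ among the vertices, so that the total generating series factors as a product of per-vertex generating series in $u$.

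Next I would identify each per-vertex factor using the computations already in hand. By Proposition \ref{prop:vanishing-X_k2}, a vertex in $V_1^1$ (resp.\ $V_1^2$, resp.\ $V_2$) contributes $u^{-1}2\sin(u/2)$ (resp.\ $u^{-1}\cos(u/2)$, resp.\ $u^{-1}$); by Proposition \ref{prop:excep1} with $l=1$ (using $b_v^i=1$ from Proposition \ref{prop:4.2}) each vertex in $V_3$ contributes $\tfrac{1}{2\sin(u/2)}$; and by the definition (\ref{eq:def-rel}) together with \cite[Theorem 4.4]{bou2021}, a vertex in $V_6$ contributes $\sum_{g_v\geq0}N_{g_v,d_v}^{\mu_v,\nu_v}u^{2g_v-2+l(\mu_v)+l(\nu_v)}$, where the $i_v$ extra weight-one entries of $\mu_v$ record the weight-one chains joining $v$ to the vertices of $V_3$ attached to it. Since Lemma \ref{lem:4.01} gives $k|V_6|+|V_2|-\sum_{i=1}^k d\cdot[E_i]=|V_3|$, the product of the $V_3$-contributions is exactly the stated power of $\tfrac{1}{2\sin(u/2)}$.

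The heart of the argument is the treatment of the bivalent vertices in $V_4\cup V_5$, which do not appear in $\dl_\Gamma$ but are contracted into its edges by Construction \ref{const:1}. For a fixed marked floor diagram $(\dl,m)$ I would sum the contributions of all graphs $\Gamma\in\overline\gl^{\vec\mu_1,\vec\mu_2}_{g,g',d,n}$ whose associated $(\dl_\Gamma,m_\Gamma)$ is equivalent to $(\dl,m)$; this sum ranges over the length of each chain (the number of consecutive fibre vertices it traverses), over the genus and cohomology decorations along the chain, and over the distribution of the factors $\prod_{e\in\eg^\circ(\Gamma)}w_e$ and $1/|\aut(\Gamma)|$ coming from (\ref{eq:4.2}). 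Using the fibre-class invariants of \cite[Lemmas 5.1, 5.2]{bou2021} one checks, as in the proof of \cite[Lemma 5.7]{bou2021} and in \cite[Section 5.2]{bru2015}, that each inner chain (both endpoints floors, no $V_5$ by Lemma \ref{lem:4.02}) telescopes to the factor $w(e)^2$, each chain ending at a leaf of $\vt^{\vec\mu_2}(\Gamma)$ (which contains exactly one $V_5$ carrying the point insertion) telescopes to $\mu_2^{(i)}$, and each chain ending at a leaf of $\vt^{\vec\mu_1}(\Gamma)$ contracts trivially; assembling these yields the complex multiplicity $\prod_{i}\mu_2^{(i)}\prod_{e\in\eg^\circ(\dl)}w(e)^2$ in front.

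The main obstacle I anticipate is precisely this chain-contraction step: one must reconcile the automorphism factor $1/|\aut(\Gamma)|$ of an individual decorated graph with the equivalence relation on marked floor diagrams, and show that summing the fibre-multiple-cover series over all chain lengths and internal decorations produces exactly $w(e)^2$ (or $\mu_2^{(i)}$) with no residual $u$-dependence beyond what is already bundled into the overall exponent $2g-2+l(\vec\mu_1)+l(\vec\mu_2)$. Once this combinatorial accounting is settled, collecting the per-vertex factors reproduces (\ref{eq:4.21}) verbatim, while the structural claims, that $\dl_\Gamma$ is a genus $g'$ floor diagram of degree $d\cdot[L]$ marked of type $(\vec\mu_1,\vec\mu_2)$, are already furnished by Lemmas \ref{lem:4.1} and \ref{lem:4.2}.
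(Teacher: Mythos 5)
Your strategy is the same as the paper's: reduce to effective graphs via the vanishing results, factor the generating series into per-vertex contributions, identify the power of $\frac{1}{2\sin(u/2)}$ with $|V_3|$ via Lemma \ref{lem:4.01}, and contract the chains of bivalent fiber-class vertices into the edges of $\dl_\Gamma$. Most steps are carried out correctly, but there is a genuine error at the heart of the chain-contraction step. You claim that an inner chain (both endpoints floors) contains \emph{no} $V_5$ vertex and telescopes to $w(e)^2$, with $V_5$ vertices occurring only in chains ending at leaves of $\vt^{\vec\mu_2}(\Gamma)$. This contradicts Lemma \ref{lem:4.02}$(1)$: since the floors in $V_1\cup V_2$ are sinks, every inner edge of $\dl_\Gamma$ has an endpoint in $V_6$, so its chain is of the first type in Lemma \ref{lem:4.02}$(1)$ and contains exactly one $V_5$ vertex. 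Your accounting is also arithmetically inconsistent: a chain of $m$ edges of weight $w$ with no $V_5$ vertex has $m-1$ intermediate vertices in $V_4$, each contributing $1/w$ by \cite[Lemma 5.1]{bou2021}, so it telescopes to $w^m\cdot w^{-(m-1)}=w$, not $w^2$; the factor $w^2$ requires exactly one $V_5$ vertex (contributing $1$ by \cite[Lemma 5.2]{bou2021}) and only $m-2$ vertices in $V_4$.

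The misplacement of the $V_5$ vertices moreover breaks the bookkeeping of the $n$ point insertions, which is precisely what Lemma \ref{lem:4.02}$(1)$ encodes. Each component $\nl_j$ carries exactly one point, absorbed by the distinguished vertex $v_j$, which for an effective graph lies in $V_5\cup V_6$; hence the number of $V_5$ vertices equals $n-|V_6|$. In your picture this number would be $l(\mu_2)$, i.e. $|V_6|=d\cdot[L]-1+g'$; combined with $d\cdot[L]=|V_1|+|V_2|+2|V_6|$ this forces $|V_1|+|V_2|+|V_6|=1-g'$, which is possible only when $g'=0$ and there is a single floor. The correct statement --- exactly one $V_5$ on every inner chain and on every $\vt^{\vec\mu_2}$-chain, and none on $\vt^{\vec\mu_1}$-chains or on the weight-one chains joining $V_3$ to $V_6$ --- is what makes $n=|V_6|+\#\{\text{inner edges}\}+l(\mu_2)$ consistent and is what produces the prefactor $\prod_{i}\mu_2^{(i)}\prod_{e\in\eg^\circ(\dl)}w(e)^2$. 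With this correction, the rest of your argument (the per-vertex generating series, the treatment of additional ends, and the use of Lemma \ref{lem:4.01}) goes through and reproduces (\ref{eq:4.21}) exactly as in the paper.
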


\begin{proof}
When $n=0$, the relative invariants $N_{g,d,n}^{X_{k}'|E}(g-g',\mu_1,\mu_2)$ are given in Remark \ref{prop:vanishing-X_k1} and Proposition \ref{prop:vanishing-X_k2},
so we only consider the case that $n>0$ in the following.
From Proposition \ref{prop:4.1}, the relative invariants $N_{g,d,n}^{X_{k}'/E}(g-g',\mu_1,\mu_2)$ is expressed by equation $(\ref{eq:4.2})$ in terms of invariants $N_{\Gamma,v}$ (see Definition \ref{def:4.2} for $N_{\Gamma,v}$).
From the non-vanishing results about relative Gromov-Witten invariants (Remark \ref{prop:vanishing-X_k1}, Proposition \ref{prop:vanishing-X_k2}, Proposition \ref{prop:excep1}, \cite[Lemma 5.1]{bou2021} and \cite[Lemma 5.2]{bou2021}), along with Proposition \ref{prop:4.2} and Lemma \ref{lem:4.02}, 
it follows that only graphs in $\overline\gl^{\vec\mu_1,\vec\mu_2}_{g,g',d,n}$ make a non-trivial contribution to equation $(\ref{eq:4.2})$.
Hence, equation $(\ref{eq:4.2})$ can be rewritten as follows.
\begin{equation}
\label{eq:4.3}
\begin{aligned}
&N_{g,d,n}^{X_{k}'|E}(g-g',\mu_1,\mu_2)\\
&=
\sum_{\Gamma\in\overline\gl^{\vec\mu_1,\vec\mu_2}_{g,g',d,n}}\frac{\prod_{e\in\eg(\Gamma)\setminus\eg^{\vec\mu}(\Gamma)}w_e}{|\aut(\Gamma)|}\prod_{v\in V_6}N_{g_v,d_v}^{\mu_v,\nu_v}
\prod_{v\in V_3}N_{g_v,d_v,0}^{X_k'|E}(\emptyset,(1))\\
&\left(\prod_{v\in V_2(\Gamma)}N_{g_v,d_v,0}^{X_k'|E}((1),\emptyset)\right)\left(\prod_{v\in V_1^2(\Gamma)}N_{g_v,d_v,0}^{X_k'|E}((2),\emptyset)\right)\left(\prod_{v\in V_1^1(\Gamma)}N_{g_v,d_v,0}^{X_k'|E}((1,1),\emptyset)\right)\\
&\left(\prod_{v\in V_4(\Gamma)}\frac{1}{d_v\cdot[Z_{j_v}^+]}\right)\left(\prod_{v\in V_5(\Gamma)}1\right).
\end{aligned}
\end{equation}

We compute the contribution of chains in $\Gamma$ that correspond to the edges in a floor diagram $\dl$ from Lemma \ref{lem:4.02}. 
We give only a detailed argument for the chains satisfying the conditions in Lemma \ref{lem:4.02}$(1)$. 
The contributions of edges corresponding to chains in Lemma \ref{lem:4.02}$(2)$ can be calculated similarly, 
so we omit the details but give the contributions of the remaining edges.
\begin{itemize}
    \item If the endpoints of the chain $c$ consist of one vertex in $V_6$ and one in $V_1\cup V_2\cup V_6$, 
    there is exactly one vertex in $V_5$ which is contained in chain $c$. 
    Suppose that $e_1,\ldots,e_m$ is such a chain. 
    Then the weight of $e_1,\ldots,e_m$ is the same, say $w(e_i)=w$. 
    The chain contains one vertex in $V_5$ and $m-2$ vertex in $V_4$, 
    so the contribution of the chain to the left-hand side of equation (\ref{eq:4.3}) is $\frac{w^m}{w^{m-2}}=w^2$. 
    \item If the endpoints of the chain $c$ consist of one vertex in $V_1\cup V_2\cup V_6$ and one in $\vt^{\vec\mu_2}(\Gamma)$, 
    it follows from Lemma \ref{lem:4.02} that there is exactly one vertex in $V_5$ in chain $c$. 
    Suppose that $\bar e_1,\ldots,\bar e_m$ is such a chain. 
    The chain contains one vertex in $V_5$ and $m-2$ vertex in $V_4$. 
    Since we consider edges adjacent to a vertex in $\vt^{\vec\mu_2}(\dl)$, 
    the contribution of the chain to the left-hand side of equation (\ref{eq:4.3}) is $\frac{w^{m-1}}{w^{m-2}}=w$.
    \item If the two endpoints of the chain $c$ consist of one vertex in $V_1\cup V_2\cup V_6$ and one in $\vt^{\vec\mu_1}(\Gamma)$, the contribution of $c$ to the left-hand side of equation (\ref{eq:4.3}) is $1$. 
    \item If the two endpoints of the chain $c$ consist of one vertex in $V_3$ and one in $V_6$, the contribution of $c$ to the left-hand side of equation (\ref{eq:4.3}) is $1$.
\end{itemize}

When we neglect the contribution of chains of $\Gamma$ with one endpoint in $V_3$ and the other endpoint in $V_6$ and add some additional weighted $1$ leaves, the contributions of edges are not changed.
Finally, one obtains formula (\ref{eq:4.21}) from (\ref{eq:4.3}), Remark \ref{prop:vanishing-X_k1}, Proposition \ref{prop:vanishing-X_k2}, Proposition \ref{prop:excep1} and Lemma \ref{lem:4.01}.
\end{proof}

\begin{theorem}
\label{thm:1}
Let $g'\in\zb_{\geq0}$, and $d\in H_2(X_k';\zb)$ be a homology class such that $d\cdot[L]>0$, $d\cdot[E]\geq0$. Suppose that $\vec\mu=(\vec\mu_1,\vec\mu_2)$ is an ordered partition of $d\cdot[E]$, 
and let $n:=d\cdot[L]-1+g'+l(\mu_2)$. Then,
we have the equality
\begin{equation}\label{eq:thm1}
\begin{aligned}
\sum_{g\geq g'}N_{g,d,n}^{X_{k}'|E}&(g-g',\mu_1,\mu_2)u^{2g-2+l(\vec\mu_1)+l(\vec\mu_2)}\\
&=u^{-d\cdot[L]}N_{q}^{\mu_1,\mu_2}(X_k',d,g')\left((-i)(q^\frac{1}{2}-q^{-\frac{1}{2}})\right)^{n+g'-1+l(\mu_1)}
\end{aligned}
\end{equation}
of power series in $u$ with rational coefficients, where
$$
q=e^{iu}=\sum_{m\geq0}\frac{(iu)^m}{m!}.
$$
\end{theorem}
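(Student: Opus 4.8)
The left-hand side of \eqref{eq:thm1} is exactly the left-hand side of Lemma \ref{lem:4.3}, so the whole task is to transform the floor-diagram expression \eqref{eq:4.21} into the closed form on the right of \eqref{eq:thm1}. The plan is to feed the known closed evaluations of the local invariants into \eqref{eq:4.21}, pass to $q=e^{iu}$, and then reorganize the result one marked floor diagram at a time into $\mult_q(\dl,m)$ multiplied by a single monomial in $u$ and $(-i)(q^{\frac12}-q^{-\frac12})$ that turns out to be independent of $\dl$.

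First I would record the dictionary produced by $q=e^{iu}$, namely
$$
(-i)(q^{\frac12}-q^{-\frac12})=2\sin\tfrac{u}{2},\qquad \cos\tfrac{u}{2}=\tfrac{[2]_q}{2},\qquad [w]_q=\frac{\sin\frac{wu}{2}}{\sin\frac{u}{2}},
$$
and abbreviate $S:=(-i)(q^{\frac12}-q^{-\frac12})$. Under this dictionary the factors of \eqref{eq:4.21} coming from Proposition \ref{prop:vanishing-X_k2} become: each $V_1^1$-floor contributes $u^{-1}S$, each $V_1^2$-floor contributes $u^{-1}[2]_q/2$, each $V_2$-floor contributes $u^{-1}$; and the power $(2\sin\frac{u}{2})^{-(k|V_6|+|V_2|-\sum_i d\cdot[E_i])}$ becomes $S^{-|V_3|}$, since Lemma \ref{lem:4.01} yields $k|V_6|+|V_2|-\sum_i d\cdot[E_i]=|V_3|$. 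For the degree-$2$ floors I would substitute Bousseau's evaluation \cite[Theorem 4.4]{bou2021} of the Hirzebruch-surface series attached to each $v\in V_6(\dl)$; after $q=e^{iu}$ this series equals $u^{-2}$ times $S^{\,l(\mu_v)+l(\nu_v)}$ times the product $\prod_e [w(e)]_q/w(e)$ over the edges of $\dl$ counted by $\mu_v$ and $\nu_v$, the $i_v$ fictitious weight-one entries contributing the trivial factor $[1]_q=1$.

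It then remains to collect exponents. For the pure power of $u$ I would use the degree identity $d\cdot[L]=|V_1|+|V_2|+2|V_6|$, read off from the homology relation in the proof of Lemma \ref{lem:4.01}: the explicit $u^{-1}$ of each $V_1$- and $V_2$-floor together with the $u^{-2}$ of each $V_6$-floor assemble into exactly $u^{-d\cdot[L]}$. For the power of $S$ I would combine the $S$ of the $V_1^1$-floors, the $S^{-|V_3|}$ above, and the $S^{\,l(\mu_v)+l(\nu_v)}$ of the $V_6$-floors; rewriting $l(\mu_v)+l(\nu_v)$ through the valence of $v$ in $\dl$ (each $v\in V_6$ carries $k-i_v$ weight-one additional ends and $\sum_v i_v=|V_3|$), and then invoking the genus relation $g'=b_1(\dl)$, the edge count $|\eg^\circ(\dl)|=g'-1+|V_1|+|V_2|+|V_6|$, and $n=d\cdot[L]-1+g'+l(\mu_2)$, the total exponent collapses to $n+g'-1+l(\mu_1)$, independent of $\dl$. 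Finally, the surviving product of quantum integers matches $\mult_q(\dl,m)$ edge by edge: the explicit $\prod_{e\in\eg^\circ}w(e)^2$ of \eqref{eq:4.21} cancels the denominators $w(e)$ supplied by the two incident floors, turning each interior edge into $[w(e)]_q^2$ (with $\cos\frac{u}{2}=[2]_q/2$ of a $V_1^2$-floor playing the role of the missing $[w]_q/w$ on a weight-two edge), while the prefactor $\prod_i\mu_2^i$ together with the factors $[w]_q/w$ upgrades the $\eg^{\vec\mu_2}$- and $\eg^{\vec\mu_1}$-ends into $[w(e)]_q$ and $[w(e)]_q/w(e)$ respectively. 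Summing over marked floor diagrams and invoking \eqref{eq:2.2} then yields $N_q^{\mu_1,\mu_2}(X_k',d,g')$.

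The main obstacle is precisely this last bookkeeping. One must verify simultaneously that all pure powers of $u$ combine into $u^{-d\cdot[L]}$, that every interior edge acquires exactly $[w(e)]_q^2$ while the two boundary families acquire the asymmetric factors of Definition \ref{def:2.4}, and that the leftover powers of $S$ reduce to the single expression $n+g'-1+l(\mu_1)$ after the cancellation $\sum_v i_v=|V_3|$ and the disappearance of the exponent of $S^{-1}$ against the additional-end contributions of the $V_6$-floors. Keeping track of which edges of $\dl$ are original, additional, or $\vec\mu$-ends — and hence which local factor supplies each $[w]_q$, each $w$, and each $S$ — is where sign and off-by-one errors are most likely, so it is the step I would write out in full detail.
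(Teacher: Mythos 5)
Your proposal is correct and takes essentially the same route as the paper's own proof: start from Lemma \ref{lem:4.3}, substitute Bousseau's closed evaluation \cite[Theorem 4.4]{bou2021} for the $V_6$-floors, pass to $q=e^{iu}$, match the quantum-integer factors edge by edge with $\mult_q(\dl,m)$, and collapse the exponent of $(-i)(q^{\frac{1}{2}}-q^{-\frac{1}{2}})$ to $n+g'-1+l(\mu_1)$ via $\sum_{v}i_v=|V_3|$ (Lemma \ref{lem:4.01}), $d\cdot[L]=|V_1|+|V_2|+2|V_6|$ and the genus/edge-count relations, which is exactly the bookkeeping the paper performs through its quantity $N(\dl)$ (your variant, using $|\eg^\circ(\dl)|=g'-1+|V_1|+|V_2|+|V_6|$ and the definition of $n$ instead of the marking relation $n=|V_6|+|E|$, does close, as one checks by the same half-edge count). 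The only steps you omit are the two degenerate cases treated separately in the paper, namely $n=0$ (settled by Remark \ref{prop:vanishing-X_k1} and Proposition \ref{prop:vanishing-X_k2}) and $V_6(\dl)=\emptyset$ (where $g'=0$ and the exponent identity is verified directly), both of which are short checks but are needed because your valence bookkeeping tacitly assumes $n>0$ and $V_6(\dl)\neq\emptyset$.
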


\begin{proof}
When $n=0$,  $(\ref{eq:thm1})$ follows from Remark \ref{prop:vanishing-X_k1} and Proposition \ref{prop:vanishing-X_k2}.
So we only consider the case that $n=d\cdot[L]-1+g'+l(\mu_2)>0$ in the following.
From Lemma \ref{lem:4.3}, Proposition \ref{prop:4.1} and \cite[Theorem 4.4]{bou2021}, we obtain the following.
\begin{equation}
\label{eq:4.4}
\begin{aligned}
&\sum_{g\geq g'}N_{g,d,n}^{X_{k}'|E}(g-g',\mu_1,\mu_2)u^{2g-2+l(\vec\mu_1)+l(\vec\mu_2)}\\
&=\sum_{\dl}u^{-|V_2|-|V_1|-2|V_6|}\left(\prod_{e\in\eg^{\vec\mu_2}(\dl)}w(e)\right)\left(\prod_{e\in\eg^\circ(\dl)}w(e)^2\right)\\
&\left(\prod_{v\in V_6(\dl)}\prod_{j=1}^{l(\mu_v)}\frac{1}{\mu_v^{(j)}}2\sin(\frac{\mu_v^{(j)}u}{2})\prod_{l=1}^{l(\nu_v)}\frac{1}{\nu_v^{(l)}}2\sin(\frac{\nu_v^{(l)}u}{2})\right)\left(\frac{1}{2\sin{\frac{u}{2}}}\right)^{k|V_6|+|V_2|-\sum_{i=1}^kd\cdot[E_i]}\\
&\left(\cos\left(\frac{u}{2}\right)\right)^{|V_1^2|}\left(2\sin\left(\frac{u}{2}\right)\right)^{|V_1^1|}.
\end{aligned}
\end{equation}
Let $e\in\dl$ be an edge pointed from $v$ to $v'$,
where $v\in V_6$. Suppose that the factor corresponding to $v$ in equation (\ref{eq:4.4}) is
$$
\prod_{j=1}^{l(\mu_v)}\frac{1}{\mu_v^{(j)}}2\sin(\frac{\mu_v^{(j)}u}{2})\prod_{l=1}^{l(\nu_v)}\frac{1}{\nu_v^{(l)}}2\sin(\frac{\nu_v^{(l)}u}{2}).
$$
From our construction of $\dl$, the weight $w(e)$ of $e$ is an entry in $\mu_v=(\mu_v^{(1)},\ldots,\mu_v^{(l(\mu_v))})$. 
For every $v\in V_6$, there are $i_v$ ones (see Definition \ref{def:additional}) in the partition $\mu_v$.
We consider the contribution of the $i_v$ ones now. 
From the definition of $i_v$, Proposition \ref{prop:4.2} and Lemma \ref{lem:4.02}(3), the sum 
$$
\sum_{v\in V_6}i_v=k|V_6|+|V_2|-\sum_{i=1}^kd\cdot[E_i].
$$
Hence, the contributions of the $i_v$ ones in $\mu_v$ for all $v\in V_6$ are cancelled by the factor $\left(\frac{1}{2\sin{\frac{u}{2}}}\right)^{k|V_6|+|V_2|-\sum_{i=1}^kd\cdot[E_i]}$.
Note that $d\cdot[L]=|V_2|+|V_1|+2|V_6|$.
The product of $u^{-d\cdot[L]}$ and the contributions of all edges and their endpoints is the total contribution of a $d$-marked floor diagram $\dl$ to the sum in the right-hand side of equation (\ref{eq:4.4}). 
Under the change of variables $q=e^{iu}$, we get $2\sin(\frac{w(e)u}{2})=(-i)(q^\frac{1}{2}-q^{-\frac{1}{2}})[w(e)]_q$ and $\cos(\frac{u}{2})=\frac{[2]_q}{2}$. 
Therefore, the right-hand side of equation (\ref{eq:4.4}) is
\begin{equation}
\label{eq:4.6}
u^{-d\cdot[L]}\sum_{\dl}\mult_q(\dl,m)((-i)(q^\frac{1}{2}-q^{-\frac{1}{2}}))^{N(\dl)+|V_1^1|},
\end{equation}
where $\mult_q(\dl,m)$ is given in Definition \ref{def:2.4}
and $N(\dl)$ is the number of original half-edges $(v,e)$ of $\dl$ with $v\in V_6(\dl)$.

Suppose that $V_6(\dl)\neq\emptyset$. 
The connectedness of $\dl$ implies that every vertex in $V_1\cup V_2$ is connected to a vertex in $V_6$.
In the marked floor diagram $(\dl,m)$, the number
$$
N(\dl)=2|E_6|+|V_2|+|V_1^2|+2|V_{6-1-6}|+|V_{\mu_1-1-6}|+|V_{\mu_1-6}|+|V_{\mu_2-1-6}|+|V_{\mu_2-6}|.
$$
Here, $V_{\mu_1-6}$ is the set of vertices in $\vt^{\vec\mu_1}(\dl)$ that are connected to a vertex in $V_6(\dl)$ by an edge,
$V_{\mu_i-1-6}\subset V_1^1$ the subset consisting of vertices in $V_1^1$ 
that are connected to a vertex in $\vt^{\vec\mu_i}(\dl)$ and a vertex in $V_6$ by two edges,
$E_6$ is the set of edges in $\dl$ with two endpoints in $V_6(\dl)$,    
and $V_{6-1-6}\subset V_1^1$ is the subset consisting of vertices in $V_1^1$ that are connected to two vertices in $V_6(\dl)$  by two edges.
From Definition \ref{def:4.4}, we obtain that $n$ is equal to $|V_6|+|E|$, where $E$ is the set of edges of $\dl$ that
correspond to chains of the type listed in Lemma \ref{lem:4.02}(1). Therefore,
$$
\begin{aligned}
n=|V_6|+|E_6|+|V_2|+|V_1^2|+2|V_{\mu_2-1-6}|+|V_{\mu_2-6}|+2|V_{6-1-6}|+|V_{\mu_1-1-6}|.
\end{aligned}
$$
Forget all inner vertices $v$ in $\dl\setminus V_6$ and merge the two edges adjacent to $v$ into a single edge.
The first Betti number of $\dl$ is invariant under this procedure. 
Then we get that the first Betti number $g'$ of $\dl$ is equal to $-|V_6|+|E_6|+|V_{6-1-6}|+1$.
Hence, the number
$$
N(\dl)=n+g'-1-|V_{6-1-6}|+|V_{\mu_1-6}|-|V_{\mu_2-1-6}|.
$$
Note that $l(\mu_1)=|V_{\mu_1-6}|+|V_{\mu_1-1-6}|$ and $|V_1^1|=|V_{\mu_1-1-6}|+|V_{\mu_2-1-6}|+|V_{6-1-6}|$.
We obtain that $N(\dl)=n+g'-1+l(\mu_1)-|V_1^1|$.
Then the sum in $(\ref{eq:4.6})$ is equal to
\begin{equation}
\label{eq:4.7}
u^{-d\cdot[L]}\left(\sum_{\dl}\mult_q(\dl,m)\right)((-i)(q^\frac{1}{2}-q^{-\frac{1}{2}}))^{n+g'-1+l(\mu_1)},
\end{equation}
after the change of variables $q=e^{iu}=\sum_{j\geq0}\frac{(iu)^m}{m!}$ and $iu=\ln q$. 
The equality in Theorem \ref{thm:1} follows from the formula (\ref{eq:4.7}) and Definition \ref{def:2.4}.

Suppose that $V_6(\dl)=\emptyset$. From $n\neq0$ and Definition \ref{def:4.4}, the set $E\neq\emptyset$.
It follows from Lemma \ref{lem:4.02} that $g'=0$, and $N(\dl)=n+g'-1+l(\mu_1)-|V_1^1|=0$. 
It is straightforward to verify that Theorem \ref{thm:1} holds.
\end{proof}

\section{Gromov-Witten invariants and higher genus BPS polynomials}\label{sec:5}

Arg\"uz and Bousseau \cite{ab25} generalized the genus zero Block-G\"ottsche polynomials to arbitrary surface by the BPS polynomials.
In this section, we use the correspondence theorem (Theorem \ref{thm:1}) to study the BPS polynomials. 
We propose a higher genus version of BPS polynomials of del Pezzo surfaces of degree $\geq3$ and Hirzebruch surfaces.
We show that the higher genus BPS polynomials are equal to the higher genus Block-G\"ottsche polynomials of toric del Pezzo surfaces and Hirzebruch surfaces.

\subsection{Gromov-Witten invariants and BPS polynomials of 3-folds}
\subsubsection{Gromov-Witten invariants}
Let $X$ be a smooth projective variety, and $d\in H_2(X;\zb)$ be a curve class.
Denote by $\overline M_{g,n}(X,d)$
the moduli space of stable maps of genus $g$ and degree $d$ to $X$ with $n$ marked points.
The moduli space $\overline M_{g,n}(X,d)$
is a proper and separated Deligne-Mumford stack \cite{beh1997,li-tian-1998,fp97}
(see \cite{fo1999,lt1998,ruan1999,pardon-2016} for the symplectic case)
with virtual dimension
$$
\vdim \overline M_{g,n}(X,d)=\int_d c_1(X)+(\dim_\mathbb{C}X-3)(1-g)+n,
$$
where $c_1(X)$ is the first Chern class of $X$.
Gromov-Witten invariants of $X$ are the virtual counts:
\begin{equation}
\label{eq:GW}
\left<\gamma;A_1,\ldots,A_n\right>_{g,d}^{X}:=
\int_{[\overline M_{g,n}(X,d)]^{vir}}\gamma\cdot\prod_{i=1}^n ev_i^*(A_i).
\end{equation}
Here,
\begin{itemize}
    \item $[\overline M_{g,n}(X,d)]^{vir}$
    is the virtual fundamental class of the moduli stack of stable maps;
    \item $ev_i:\overline M_{g,n}(X,d)\to X$ is the evaluation map defined by
    $ev_i([f:(C,x_1,\ldots,x_n)\to X])=f(x_i)$;
    \item $A_1,\ldots,A_n\in H^{*}(X;\qb)$, 
    and $\gamma$ is a cohomology class in $H^{2*}(\overline M_{g,n}(X,d);\qb)$.
\end{itemize}

Let $\pi:\cl\to\overline M_{g,n}(X,d)$ be the universal curve,
and $\omega_\pi$ be the relative dualizing sheaf.
The Hodge bundle $\eb_\pi:=\pi_*\omega_\pi$ is a rank $g$ vector bundle on
$\overline M_{g,n}(X,d)$. Lambda classes are the Chern classes
of the Hodge bundle $\eb_\pi$: 
$
\lambda_{m,\pi}:=c_m(\eb_\pi)\in H^{2m}(\overline M_{g,n}(X,d);\qb), m=0,1,\ldots,g.
$
When $p_1,\ldots,p_n\in H^{*}(X;\qb)$ are classes Poincar\'e dual to points in $X$, we put
\begin{equation}
\label{equ:GW1}
N_{g,d,n}^{X}(m):=
\left<(-1)^m\lambda_m;p_1,\ldots,p_n\right>_{g,d}^{X}.
\end{equation}
When $m=g$, we use $N_{g,d,n}^{X}$ to denote $N_{g,d,n}^{X}(g)$.

\subsubsection{BPS polynomials of 3-folds}
We recall the definition of BPS polynomials of 3-folds from \cite[Section 3.1]{ab25}.

Let $X$ be a smooth projective 3-fold, and let $A=(A_1,\ldots,A_n)$ be a collection of classes $A_i\in H^*(X;\zb)$.
When the class $d\in H_2(X;\zb)$ satisfies $c_1(X)\cdot d>0$, following \cite{pand-02,pand-99},
the \textit{Gopakumar-Vafa BPS invariants} $n^X_{g,d,A}$ are defined by the formula
$$
\sum_{g\geq0}\langle A_1,\ldots,A_n\rangle_{g,d}^Xu^{2g-2+c_1(X)\cdot d}=\sum_{g\geq0}n_{g,d,A}^X\left(2\sin(\frac{u}{2})\right)^{2g-2+c_1(X)\cdot d}.
$$
From \cite[Theorem 1.5]{zinger-99}, the BPS invariants $n^X_{g,d,A}$ are integers.

\begin{definition}[{\cite[Definition 3.1]{ab25}}]
    Let $X$ be a smooth projective 3-fold, and let $A=(A_1,\ldots,A_n)$ be a collection of classes $A_i\in H^*(X;\zb)$.
    When the class $d\in H_2(X;\zb)$ satisfies $c_1(X)\cdot d>0$, the \textit{BPS polynomial} $n_{d,A}^X(q)$ of $X$ is the Laurent polynomial defined as
    $$
    n_{d,A}^X(q):=\sum_{g\geq0}n_{g,d,A}^X\left(2\sin(\frac{u}{2})\right)^{2g}=\sum_{g\geq0}n_{g,d,A}^X(-1)^g(q-2+q^{-1})^{g}\in\zb[q^\pm],
    $$
    where $q=e^{iu}$.
\end{definition}

\subsection{BPS polynomials of surfaces}
We review the definition of BPS polynomials of surfaces following \cite[Section 3.2]{ab25}, then we introduce the definition of higher genus BPS polynomials.

Let $X$ be a smooth projective surface and let $Y=X\times\pb^1$. We denote the natural projections by $\pi_1:Y\to X$ and $\pi_2:Y\to\pb^1$.
Fix a class $d\in H_2(X;\zb)$ such that $c_1(X)\cdot d-1\geq0$. Let $P_d=(p_i)_{0\leq i\leq c_1(X)\cdot d-1}$ be a tuple of $c_1(X)\cdot d$ classes that are defined as
$$
p_0:=\pi_2^*(q_0)\in H^2(Y;\zb), p_i:=\pi_1^*(q_i)\in H^4(Y;\zb), ~\forall 1\leq i\leq c_1(X)\cdot d-1,
$$
where $q_0\in H^2(\pb^1;\zb)$ is a class Poincar\'e dual to a point in $\pb^1$, and $q_i\in H^4(X;\zb)$ is a class Poincar\'e dual to a point in $X$, $i\in\{1,\ldots,c_1(X)\cdot d-1\}$.

\begin{definition}[{\cite[Definition 3.2]{ab25}}]
    Let $X$ be a smooth projective surface and $d\in H_2(X;\zb)$ be a class such that $c_1(X)\cdot d-1\geq0$. The \textit{BPS polynomial} $n_{d}^X(q)$ of $X$ is the BPS polynomial $n_{(d,0),P_d}^Y(q)$ of the 3-fold $Y=X\times\pb^1$ of class $(d,0)\in H_2(Y;\zb)\cong H_2(X;\zb)\times\zb$ and with the insertion of $P_d$:
    $$
    n_{d}^X(q):=n_{(d,0),P_d}^Y(q)\in\zb[q^\pm].
    $$
\end{definition}

\begin{lemma}[{\cite[Lemma 3.3]{ab25}}]\label{lem:0-bps-poly}
    Let $X$ be a smooth projective surface and $d\in H_2(X;\zb)$ be a class such that $c_1(X)\cdot d-1\geq0$. Then, with the change of variables $q=e^{iu}$, we have
    $$
    n_{d}^X(q)=\left(2\sin(\frac{u}{2})\right)^{2-c_1(X)\cdot d}\sum_{g\geq0}N_{g,d,c_1(X)\cdot d-1}^Xu^{2g-2+c_1(X)\cdot d}.
    $$
\end{lemma}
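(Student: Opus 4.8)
The plan is to peel off the two layers of definitions and reduce the statement to a single geometric identity. Write $r:=c_1(X)\cdot d$, so that $P_d=(p_0,p_1,\dots,p_{r-1})$ is a tuple of $r$ classes and $\overline{M}_{g,r}(Y,(d,0))$ carries $r$ marked points; since $(d,0)$ has $\pb^1$-degree $0$ we have $c_1(Y)\cdot(d,0)=c_1(X)\cdot d=r$. First I would feed the Gopakumar--Vafa defining relation into the definition of $n_{(d,0),P_d}^Y(q)$. Multiplying that relation by $\left(2\sin(\frac{u}{2})\right)^{2-r}$ turns its right-hand side into $\sum_{g\ge0}n_{g,(d,0),P_d}^Y\left(2\sin(\frac{u}{2})\right)^{2g}$, which is exactly $n_{(d,0),P_d}^Y(q)=n_d^X(q)$, and its left-hand side into $\left(2\sin(\frac{u}{2})\right)^{2-r}\sum_{g\ge0}\langle P_d\rangle_{g,(d,0)}^Y u^{2g-2+r}$. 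This already has the shape of the asserted formula, so the lemma reduces to the equality of Gromov--Witten invariants
$$\langle P_d\rangle_{g,(d,0)}^Y=N_{g,d,r-1}^X=\left\langle(-1)^g\lambda_g;p_1,\dots,p_{r-1}\right\rangle_{g,d}^X$$
for every $g\ge0$.

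The heart is a \emph{dimensional reduction} from the threefold $Y=X\times\pb^1$ to the surface $X$. A connected stable map of class $(d,0)$ projects to $\pb^1$ with degree $0$, hence is constant there, so its image lies in a single fiber $X\times\{t\}$; recording $t$ yields an identification of moduli stacks $\overline{M}_{g,r}(Y,(d,0))\cong\overline{M}_{g,r}(X,d)\times\pb^1$ compatible with the marked points. I would then compare the obstruction theories through the splitting $T_Y=\pi_1^*T_X\oplus\pi_2^*T_{\pb^1}$. The summand $\pi_1^*T_X$ reproduces the obstruction theory of stable maps to $X$, while $f^*\pi_2^*T_{\pb^1}$ is the trivial bundle $\ol_C$ tensored by the tangent line of $\pb^1$ at $t$; pushing forward along the universal curve $\pi$, its $R^0$ accounts for the tangent direction of the $\pb^1$-factor and its $R^1\cong\eb^\vee\boxtimes T\pb^1$ is an excess obstruction bundle of rank $g$, where $\eb$ is the Hodge bundle and $R^1\pi_*\ol_C\cong\eb^\vee$ by relative Serre duality. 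The standard virtual-class comparison then gives
$$[\overline{M}_{g,r}(Y,(d,0))]^{vir}=e\left(\eb^\vee\boxtimes T\pb^1\right)\cap\left([\overline{M}_{g,r}(X,d)]^{vir}\times[\pb^1]\right).$$

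Finally I would evaluate the Euler class against the insertions. Because $L:=\pr_{\pb^1}^*T\pb^1$ is a line bundle, $e(\eb^\vee\otimes L)=\sum_{i=0}^{g}(-1)^i\lambda_i\,c_1(L)^{g-i}$ with $c_1(L)=2\,\pr_{\pb^1}^*q_0$. Under the product identification $ev_0^*p_0=\pr_{\pb^1}^*q_0$, while $ev_i^*p_i=(ev_i^X)^*q_i$ for $1\le i\le r-1$ is pulled back from the surface factor. Since $q_0^2=0$ in $H^*(\pb^1)$, the combined power of $\pr_{\pb^1}^*q_0$ produced by $ev_0^*p_0$ and by $c_1(L)^{g-i}$ equals $(g-i)+1$, which is $\le1$ only for $i=g$; hence only the term $(-1)^g\lambda_g$ survives. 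Integrating out the $\pb^1$-factor with $\int_{\pb^1}q_0=1$ leaves
$$\langle P_d\rangle_{g,(d,0)}^Y=\int_{[\overline{M}_{g,r}(X,d)]^{vir}}(-1)^g\lambda_g\prod_{i=1}^{r-1}(ev_i^X)^*q_i=N_{g,d,r-1}^X,$$
which is the reduced identity; substituting it back into the formula of the first paragraph proves the lemma.

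I expect the main obstacle to be the virtual-class comparison of the second paragraph: one must identify the perfect obstruction theory of $\overline{M}_{g,r}(Y,(d,0))$ with that of $\overline{M}_{g,r}(X,d)$ twisted by the excess bundle $\eb^\vee\boxtimes T\pb^1$ precisely, not merely match virtual dimensions. This is a standard dimensional-reduction argument, of the kind relating local threefold invariants to $\lambda_g$-twisted surface invariants; once it is in place the remaining Euler-class bookkeeping is routine. The conceptual point worth highlighting is that the insertion $p_0$ exists precisely to absorb the extra $\pb^1$-translation direction recorded by $R^0\pi_*f^*\pi_2^*T_{\pb^1}$, which is why the count is zero-dimensional despite the naive dimension mismatch.
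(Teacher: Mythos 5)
The paper itself does not prove this lemma; it is imported verbatim from \cite[Lemma 3.3]{ab25}, so there is no internal proof to compare against. Your overall strategy is the standard and essentially the only route: unfold the two definitions to reduce the lemma to the single identity $\langle P_d\rangle^Y_{g,(d,0)}=N^X_{g,d,r-1}$ (writing $r=c_1(X)\cdot d$ as you do), then prove that identity by dimensional reduction, using $\overline{M}_{g,r}(Y,(d,0))\cong\overline{M}_{g,r}(X,d)\times\pb^1$, the splitting of the obstruction theory, the identification $R^1\pi_*\ol_{C}\cong\eb^\vee$, and the expansion $e(\eb^\vee\boxtimes T\pb^1)=\sum_{i=0}^{g}(-1)^i\lambda_i\,c_1(T\pb^1)^{g-i}$. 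All of those intermediate steps are correct.

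The genuine gap is in your final displayed equality. After integrating out the $\pb^1$-factor, the remaining integral is over $[\overline{M}_{g,r}(X,d)]^{vir}$: this moduli space still carries the marked point $x_0$ at which $p_0$ was inserted, but since $ev_0^*p_0=\pr_{\pb^1}^*q_0$ imposes nothing on the $X$-geometry, $x_0$ is a \emph{free} marked point. That integral is not $N^X_{g,d,r-1}$, which is defined over $\overline{M}_{g,r-1}(X,d)$; it is zero. Indeed the integrand $(-1)^g\lambda_g\prod_{i=1}^{r-1}(ev_i^X)^*q_i$ has complex degree $g+2(r-1)$, one less than $\vdim\overline{M}_{g,r}(X,d)=2r+g-1$; equivalently, the integrand is pulled back along the forgetful map $\rho$ forgetting $x_0$, and since $[\overline{M}_{g,r}(X,d)]^{vir}=\rho^*[\overline{M}_{g,r-1}(X,d)]^{vir}$ with $\rho$ flat and proper of relative dimension one, $\rho_*\rho^*=0$ (this is the string equation). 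The same vanishing is visible upstream without any reduction: $p_0=\pi_2^*(q_0)$ is a divisor class with $p_0\cdot(d,0)=0$, so the divisor equation already forces $\langle P_d\rangle^Y_{g,(d,0)}=0$ under the literal reading of the insertions. So, read carefully, your chain of equalities proves that the left-hand side vanishes and then equates it to a nonvanishing quantity. The repair is not cosmetic bookkeeping: the distinguished insertion must also constrain the $X$-coordinate, i.e.\ one must take the class of a point of $Y$, namely $\pi_1^*(q_1)\cup\pi_2^*(q_0)\in H^6(Y;\zb)$, carried by one of $c_1(X)\cdot d-1$ marked points in total. With that understanding of the definition of $n_d^X(q)$, your dimensional reduction lands on $\overline{M}_{g,r-1}(X,d)$ with every marked point carrying a point constraint of $X$, the identity $\langle P_d\rangle^Y_{g,(d,0)}=N^X_{g,d,r-1}$ holds, and the rest of your argument goes through. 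Your closing remark that $p_0$ absorbs the $\pb^1$-translation ``despite the naive dimension mismatch'' is exactly where the gap hides: the mismatch is real, and what is left uncut is the free marked point, not the $\pb^1$-direction.
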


We introduce the higher genus BPS polynomials as follows.
\begin{definition}\label{def:g-bps-poly}
    Let $g'\in\zb_{\geq0}$, and suppose that $X$ and $d$ satisfy one of the following conditions:
    \begin{itemize}
        \item $X$ is a del Pezzo surface of degree $\geq 3$ and $d\in H_2(X;\zb)$ is a class such that $c_1(X)\cdot d+g'-1\geq0$ and $d\cdot [L]>0$.
        \item $X$ is a Hirzebruch surface and $d\in H_2(X;\zb)$ is a class such that $d\cdot[D_i]\geq0$ for every toric divisor $D_i$ of $X$.
    \end{itemize}
    Define $ P_{g',d}^X(u)$ to be the power series $P_{g',d}^X(u)$ in $u$ with rational coefficients as follows
    \begin{equation}\label{eq:g-bps-poly}
    P_{g',d}^X(u)=\left(2\sin(\frac{u}{2})\right)^{2-2g'-c_1(X)\cdot d}\sum_{g\geq g'}N_{g,d,c_1(X)\cdot d+g'-1}^X(g-g')u^{2g-2+c_1(X)\cdot d}
    \end{equation}
 
  The \textit{genus $g'$ class $d$ BPS polynomial $n_{g',d}^X(q)$} of $X$ is defined as
    $$n_{g',d}^X(q):=P_{g',d}^X(u),$$
    under the change of variables $q=e^{iu}$.
\end{definition}

\begin{remark}
    It follows from Theorem \ref{thm:main1} that Definition \ref{def:g-bps-poly} is well defined.
\end{remark}

\subsection{Projective plane and Hirzebruch surfaces}

\begin{theorem}\label{thm:case1}
    Let $X$ be the projective plane $\pb^2$ or the Hirzebruch surface $\fb_k$. Given $g'\in\zb_{\geq0}$, let $d\in H_2(X;\zb)$ be a class such that $d\cdot[D_i]\geq0$ for every toric divisor $D_i$ of $X$.
    Then, under the change of variables $q=e^{iu}$, we have
    $$
    P_{g',d}^X(u)=N_{g',d}^{X,\trop}(q).
    $$
    In particular, $N_{g',d}^{X,\trop}$ is a Laurent polynomial in $q$ with integer coefficients.
\end{theorem}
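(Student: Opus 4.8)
The plan is to combine Bousseau's refined correspondence for toric surfaces \cite{bou2021} with the refined version of the Brugall\'e--Mikhalkin correspondence between floor diagrams and tropical curves \cite{bm2007,bm2008,bg2016}. Since $X=\pb^2$ or $X=\fb_k$ is toric and the hypothesis $d\cdot[D_i]\geq0$ ensures that the relevant floor-diagram and tropical counts are nonempty, both sides of the asserted identity are well defined, and the task is to show that they agree after the substitution $q=e^{iu}$.

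First I would rewrite the generating series appearing in the definition (\ref{eq:g-bps-poly}) of $P_{g',d}^X(u)$ as a refined count of genus $g'$ toric floor diagrams. The strategy mirrors Section \ref{sec:4}, but now entirely within the toric setting, so no conic is involved: I would degenerate $X$ along a toric divisor into a chain of toric building blocks, distribute the $n=c_1(X)\cdot d+g'-1$ point classes one per block, and apply the degeneration formula. This expresses the absolute Gromov--Witten invariants $N_{g,d,n}^X(g-g')$ in terms of relative Gromov--Witten invariants of the toric pieces carrying a Lambda class, and the latter are exactly the invariants computed by Bousseau in \cite{bou2021}. Assembling the vertex and edge contributions and using the substitution $2\sin(\frac{u}{2})=(-i)(q^{\frac12}-q^{-\frac12})$ as in the proof of Theorem \ref{thm:1}, the normalizing factor $(2\sin\frac{u}{2})^{2-2g'-c_1(X)\cdot d}$ in (\ref{eq:g-bps-poly}) turns $P_{g',d}^X(u)$ into a sum over genus $g'$ marked floor diagrams weighted by their Block--G\"ottsche refined multiplicity.

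Second I would identify this refined floor-diagram count with the tropical invariant $N_{g',d}^{X,\trop}(q)$ of (\ref{eq:trop-ref-inv}). Choosing a vertically stretched generic configuration $\pl$ of $c_1(X)\cdot d+g'-1$ points, the parameterized genus $g'$ tropical curves in $\cl_{d,\pl}$ are in a weight-preserving bijection with the marked genus $g'$ floor diagrams of the previous step, and under this bijection the product of quantum vertex multiplicities $\prod_v[\mult_v]_q$ matches the refined floor multiplicity. Since $N_{g',d}^{X,\trop}(q)$ is independent of the choice of $\pl$ by \cite[Theorem 1]{im13}, the stretched configuration may be used without loss of generality, and the two counts coincide.

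The main obstacle is the bookkeeping of normalizations. I must verify that the prefactor $(2\sin\frac{u}{2})^{2-2g'-c_1(X)\cdot d}$ precisely absorbs the contributions of the unbounded edges and of the genus-contracting class $\lambda_{g-g'}$, so that the surviving expression is the balanced product of quantum integers attached to the inner vertices and bounded edges. Once this matching is checked, the equality $P_{g',d}^X(u)=N_{g',d}^{X,\trop}(q)$ follows. The final ``in particular'' assertion is then immediate: each factor $[\mult_v]_q$ lies in $\zb[q^{\pm\frac12}]$, and the half-integer powers combine to integer powers over any genus $g'$ tropical curve, so $N_{g',d}^{X,\trop}(q)$ is a Laurent polynomial in $q$ with integer coefficients.
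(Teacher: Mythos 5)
Your proposal is correct in outline, but it follows a genuinely different route from the paper's proof. The paper uses a \emph{minimal} degeneration: for $\fb_k$ it degenerates to the normal cone of $S_{-k}\cup S_k$, so the central fiber has only three components, and it keeps all $n=c_1(X)\cdot d+g'-1$ point insertions in the middle one; a dimension count then forces every curve component in the two outer pieces to be a genus-zero fiber with trivial contribution, so the absolute invariant $N_{g,d,n}^X(g-g')$ collapses to a single relative invariant of $X$ with all tangencies equal to $1$ along the toric divisor(s) (equation (\ref{eq:thm-case1})); at that point the identification with $N_{g',d}^{X,\trop}(q)$ is entirely outsourced to Bousseau, via \cite[Theorems 5.12 and 7.1]{bou2021} and \cite[Theorem 1]{bou2019}, which already contain both the floor-diagram decomposition and the tropical matching. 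You instead run the full floor-diagram chain degeneration, one point per block, directly on the absolute invariants, and then invoke the refined Brugall\'e--Mikhalkin correspondence between marked floor diagrams and tropical curves through a vertically stretched configuration, together with the invariance statement of \cite{im13}. This works: the vertex contributions you need are exactly Bousseau's computations, and the Lambda-class splitting and vanishing results (\cite[Proposition 3.1]{bou2021}, \cite[Lemma 8]{bou2019}) force the graph genus to be at most $g'$, as your assembly step implicitly requires. The trade-off is that your route essentially reproves, in the absolute setting, the relative-invariants-to-floor-diagrams half of \cite{bou2021}, and your final step (floor diagrams to tropical curves with matching refined multiplicities) is itself the content of \cite[Theorem 7.1]{bou2021}; the paper's route is more economical because it reduces to a statement that can be cited verbatim. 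Conversely, your route makes the floor diagrams and the stretched point configuration explicit, which the paper's proof never exhibits. One point to make explicit if you carry this out: in your chain degeneration the blocks carrying no point insertion (and the closing-off components at the ends of the chain) must be shown to contribute only genus-zero fiber classes; this is the same dimension count that the paper performs for its outer components, and it is where the hypothesis $d\cdot[D_i]\geq0$ and the structure of the toric boundary actually enter.
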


\begin{proof}
    We first consider the case $X=\fb_k$. Let $S_k$ and $S_{-k}$ be the two distinguished sections of $\fb_k$ with $[S_k]^2=k=-[S_{-k}]^2$, and let $F$ be a fiber in $\fb_k$. Let $n=c_1(X)+g'-1$.
    In order to compare the Gromov-Witten invariants $N_{g,d,n}^X(g-g')$ with the relative Gromov-Witten invariants,
    we degenerate $X$ to the normal cone of the smooth divisor $S_{-k}\cup S_{k}$, and the central fiber is $\fb_k^{(1)}\sideset{_{S_k}}{_{S_{-k}}}{\mathop{\cup}}\fb_k^{(2)}\sideset{_{S_k}}{_{S_{-k}}}{\mathop{\cup}}\fb_k^{(3)}$.
    Then we apply the degeneration formula \cite{li2002} to this degeneration and keep the $n$ point insertions in $\fb_k^{(2)}$. Suppose that the non-zero terms in the degeneration formula are indexed by graphs $\Gamma$.
    Let $v\in\Gamma$ be a vertex corresponding to a curve in $\fb_k^{(1)}$ of class $h_v[S_k]+l_v[F]$, genus $g_v$ and contact orders $\mu$, where $\mu$ is a partition of $h_vk+l_v$.
    The expected dimension of the moduli space of relative stable maps corresponding to the vertex $v$ is $2h_v+l_v+g_v-1+l(\mu)$. The maximal degree of an insersion is $g_v+l(\mu)$. By a dimension counting, the contribution of $\Gamma$ is $0$ except for the case that $2h_v+l_v+g_v-1+l(\mu)\leq g_v+l(\mu)$. Hence, $l_v=1$ and $h_v=0$. By \cite[Lemma 3.3]{bou2021}, the contribution of the vertex $v$ is $1$ if $g_v=0$; otherwise, the contribution of $v$ is $0$. 
    If $v\in\Gamma$ is a vertex corresponding to a curve in $\fb_k^{(3)}$,
    by a similar argument as the case $\fb_k^{(1)}$ we obtain that $d_v=[F]$. Then, the contribution of the vertex $v$ is $1$ if $g_v=0$; otherwise, the contribution of $v$ is $0$. 
    Since the graph $\Gamma$ is connected, there is only one vertex $v'$ of $\Gamma$ corresponding to curves in $\fb_k^{(2)}$. The graph $\Gamma$ has to be genus $0$ and $g_{v'}=g$. By \cite[Proposition 3.1]{bou2021} the Lambda class at the vertex $v'$ is $\lambda_{g-g'}$.
    Hence, we get
    \begin{equation}\label{eq:thm-case1}
    N_{g,d,n}^X(g-g')=\langle(1^{d\cdot[S_{-k}]}),\delta^1|(-1)^{g-g'}\lambda_{g-g'};p_1,\ldots,p_n|(1^{d\cdot[S_{k}]}),\delta^2\rangle_{g,d}^{X|(S_{-k}\cup S_k)},
    \end{equation}
    where $\delta^1_i=1\in H^0(S_{-k};\zb)$ and $\delta^2_j=1\in H^0(S_{k};\zb)$.
    We denote by $N_{g,rel}^{\Delta,n}$ the relative invariant on the right-hand side of equation (\ref{eq:thm-case1}).
    Under the change of variables $q=e^{iu}$, we have
    $$
    P_{g',d}^X(u)=\left((-i)(q^{\frac{1}{2}}-q^{-\frac{1}{2}})\right)^{2-2g'-c_1(X)\cdot d}\sum_{g\geq g'}N_{g,rel}^{\Delta,n}u^{2g-2+c_1(X)\cdot d}.
    $$
    Theorem \ref{thm:case1} follows from \cite[Theorem 7.1]{bou2021}, \cite[Theorem 5.12]{bou2021} and \cite[Theorem 1]{bou2019}. 

    When $X=\pb^2$, we degenerate $\pb^2$ to the normal cone of the line $L$, and the central fiber is $\pb^2\cup\fb_1$. 
    Then we apply the degeneration formula \cite{li2002} to this degeneration and keep the $n$ point insertions in $\pb^2$. The rest of the proof follows the same line as in the case $X=\fb_k$, so we omit it.
\end{proof}

Let $X$ be a toric del Pezzo surface.
It follows from \cite[Theorem A]{ab25} that the BPS polynomial $n_{0,d}^X(q)$ is equal to the genus zero Block-G\"ottsche polynomial $N_{0,d}^{X,\trop}(q)$. 
From Theorem \ref{thm:case1} and the following theorem, the higher genus BPS polynomial of a toric del Pezzo surface is equal to the higher genus Block-G\"ottsche polynomial.

\begin{theorem}\label{conj1}
    Let $g'\in\zb_{\geq1}$, and $d\in H_2(X_k;\zb)$, $k=2,3$, be a class such that $d\cdot[D_i]\geq0$ for every toric divisor $D_i$ of $X_k$. Then, we have
    $$
    n_{g',d}^{X_k}(q)=N_{g',d}^{X_k,\trop}(q).
    $$
\end{theorem}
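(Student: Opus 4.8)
The plan is to prove $P_{g',d}^{X_k}(u)=N_{g',d}^{X_k,\trop}(q)$ for the two toric del Pezzo surfaces left over from Theorem \ref{thm:case1}, namely $X_2$ (degree $7$) and $X_3$ (degree $6$). For $g'=0$ the identity is \cite[Theorem A]{ab25}, so only $g'\geq1$ must be treated. The difficulty specific to these surfaces is that a higher-genus toric refined correspondence is available from \cite{bou2021} only for Hirzebruch surfaces and $\pb^2$, and $X_2,X_3$ are neither; so the higher-genus input must come from the machinery of this paper. The key observation is that any $k\leq5$ general points of $\pb^2$ lie on a smooth conic, whence $X_k\cong X_k'$ for $k=2,3$, and in particular the absolute invariants $N_{g,d,n}^{X_k}(g-g')$ entering $P_{g',d}^{X_k}(u)$ equal the corresponding invariants of $X_k'$.

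First I would degenerate $X_k'$ to the normal cone of the conic $E$, as in \S\ref{sec:4}, and apply the degeneration formula \cite{li2002} to the absolute invariant with $n=c_1(X_k)\cdot d+g'-1$ point constraints. Using the vanishing results of Proposition \ref{prop:excep1} and \cite[Lemmas 5.1, 5.2]{bou2021} to discard the unwanted strata, this expresses $P_{g',d}^{X_k}(u)$ as a weighted sum, over the ordered tangency profiles $\vec\mu=(\vec\mu_1,\vec\mu_2)$ of $d\cdot[E]$, of the relative generating series appearing on the left-hand side of Theorem \ref{thm:1}. Applying Theorem \ref{thm:1} then rewrites $P_{g',d}^{X_k}(u)$, after $q=e^{iu}$ and $2\sin(u/2)=(-i)(q^{1/2}-q^{-1/2})$, as an explicit weighted sum over $\vec\mu$ of the refined floor-diagram counts $N_q^{\vec\mu_1,\vec\mu_2}(X_k',d,g')$ relative to a conic.

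The main obstacle is the final step: one must identify this weighted sum of refined floor-diagram counts relative to a conic with the refined count $N_{g',d}^{X_k,\trop}(q)$ of genus-$g'$ tropical curves in the fan of the toric surface $X_k$. This is not a formal identity, since the two models organize the same curves differently. I would attempt it by reducing both sides to the genus-zero comparison \cite[Theorem A]{ab25} through the Caporaso-Harris recursion of Theorem \ref{thm:CH-c}, which lowers the contact data along $E$. A second, more geometric route avoids floor diagrams altogether: degenerate the toric $X_k$ along a ruling $X_k\to\pb^1$ exactly as $\fb_k$ was degenerated in Theorem \ref{thm:case1}, so that the outer components are Hirzebruch surfaces contributing genus-$0$ fiber classes of multiplicity $1$ and the central component is a copy of $X_k$; the extra difficulty over Theorem \ref{thm:case1} is then precisely to control the reducible fibers and exceptional curves of $X_k$, again via Proposition \ref{prop:excep1} and the analysis of \S\ref{subsec:4.3}, so that the surviving relative invariant matches the tropical count.
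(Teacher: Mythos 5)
Your opening reduction is correct, and it reproduces what the paper proves for all $k\leq5$: using $X_k\cong X_k'$, the degeneration to the normal cone of the conic $E$, and the dimension count that forces every curve component in $\nl$ to be a fiber, one gets $N_{g,d,n}^{X_k}(g-g')=N_{g,d,n}^{X_k|E}(g-g',\emptyset,1^{d\cdot[E]})$ (Lemma \ref{lem:deg>3}); combined with Theorem \ref{thm:1} this gives $P_{g',d}^{X_k}(u)=N_q^{\emptyset,(1^{d\cdot[E]})}(X_k,d,g')$ (Theorem \ref{thm:2}). Note that the dimension count collapses the sum to the single trivial tangency profile, not a weighted sum over all $\vec\mu$ as you describe, but this only simplifies matters. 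The genuine gap is exactly the step you flag as the ``main obstacle,'' and neither of your routes closes it. Route (a) asks to match two recursions of structurally different shapes: Theorem \ref{thm:CH} degenerates along the conic $E$, producing conic-relative counts with profiles $(\mu_1,\mu_2)$ and classes summing to $d-[E]$, whereas the Block-G\"ottsche recursion \cite{bg2016} degenerates along a toric divisor and produces toric-boundary-relative tropical counts; no result in the paper or in the cited literature relates the intermediate objects of the two recursion trees, and the genus-zero identity \cite[Theorem A]{ab25} matches only the endpoints, so the claimed ``reduction'' is itself a new correspondence theorem. Route (b) terminates, after the dimension count, in a relative Gromov-Witten invariant of $X_k$ ($k=2,3$) relative to sections of a ruling; since $X_2,X_3$ are not Hirzebruch surfaces, none of the correspondence results of \cite{bou2021} applies to that relative invariant, so ``the surviving relative invariant matches the tropical count'' is precisely the unproven statement. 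Moreover, Proposition \ref{prop:excep1} and Section \ref{subsec:4.3} concern exceptional curves and chains in the conic degeneration; they say nothing about multiple covers of the exceptional components sitting in the reducible fibers of a ruling of $X_2$ or $X_3$.

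The paper closes this gap by a mechanism that never touches the conic. It takes the Nishinou-Siebert toric degeneration \cite{ns-2006} of $X_k$ associated with the polyhedral decomposition of $\rb^2$ built from the fan, whose central fiber is $X_k\cup(\cup_i\pb_i)$ with $\pb_i$ the $\pb^1$-bundles over the toric divisors $D_i$; it then applies the decomposition formula of \cite{acgs-2020} together with a gluing formula, and a dimension count (Lemma \ref{lem:non-vanish}) shows that the only contributing tropical types are stars with a single central vertex carrying the class $d$, the Lambda class $\lambda_{g-g'}$, and weight-one edges with trivial insertions. This yields $N_{g,d,n}^{X_k}(g-g')=N_{g,d,n,\log}^{(X_k,D)}(g-g')$, the log invariant relative to the \emph{whole toric boundary}, and the theorem then follows from Bousseau's correspondence \cite[Theorem 1]{bou2019}, which--unlike the relative and floor-diagram results of \cite{bou2021}--is valid for log Gromov-Witten invariants of an arbitrary smooth projective toric surface, in particular for $X_2$ and $X_3$. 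In short, your approach would force you to prove a comparison between conic-relative and toric-boundary-relative refined counts from scratch; the paper's route shows it is both simpler and sufficient to bypass the conic entirely for this statement.
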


\begin{proof}
The proof of this theorem follows closely \cite[Section 3.3]{ab25}. We first recall the construction from \cite[Section 3.3]{ab25}, then modify it to our case.

Let $X$ be the toric del Pezzo surface $X_2$ or $X_3$.
Let $\rho_1,\ldots,\rho_j$ be the rays of the fan $\Sigma_X$ of $X$. We assume that these rays are labeled clockwise.
We use $m_i$ to denote the primitive integral point on $\rho_i$.
For any $i\in\{1,\ldots,j-1\}$, let $E_i$ be the edge in $\rb^2$ that connects $m_i$ to $m_{i+1}$. Let $E_j$ be the edge connecting $m_j$ to $m_1$.
Denote by $\pl_X$ the polyhedral decomposition of $\rb^2$ obtained from the fan $\Sigma_X$ by adding edges $E_1,\ldots,E_j$.
A toric degeneration $\pi:\xk\to\cb$ is determined by the polyhedral decomposition $\pl_X$ as in \cite[Section 3]{ns-2006}.
The central fiber $\xk_0:=\pi^{-1}(0)$ has dual intersection complex $\pl_X$, and the fiber $\xk_t:=\pi^{-1}(t)$, $t\neq0$, is isomorphic to the surface $X$.
Moreover, the decomposition of the central fiber $\xk_0$ into irreducible components is given by
$$
\xk_0=X\bigcup\left(\cup_{i=1}^j\pb_i\right).
$$
Here, $\pb_i$ are toric surfaces that are $\pb^1$-bundles over the divisors $D_i$ in $X$ corresponding to the rays $\rho_i$ of $\Sigma_X$.

Equip $\xk$ with the divisorial log structure defined by $\xk_0$, and $\cb$ with the divisorial log structure defined by $\{0\}\subset\cb$. Then the morphism $\pi:\xk\to\cb$ lifts to a log smooth morphism naturally.
Note that the tropicalization of $\xk$ is the cone over the compact polygon $P\subset\rb^2$ with vertices the points $m_1,\ldots,m_j$.
We choose $n:=c_1(X)\cdot d+g'-1$ generic sections $\undl p(t)=(p_1(t),\ldots,p_n(t))$ of $\pi:\xk\to\cb$ such that $p_i(0)$ are lying entirely in the irreducible component of the central fiber isomorphic to $X$. We specialize the $n$ point constraints as $\undl p(t)$.
Since the log structure on a generic fiber $\xk_t$ is the trivial log structure, by applying the decomposition formula in log Gromov-Witten theory of \cite[Theorem 5.4]{acgs-2020} to the log smooth degeneration $\pi:\xk\to\cb$, one obtains that
\begin{equation}\label{eq:decomp}
N_{g,d,n}^X(g-g')=\sum_{h:\Gamma\to P}\frac{n_h}{|\aut(h)|}N_{h,\log}^{\xk_0},
\end{equation}
where we sum over the rigid decorated tropical curves $h:\Gamma\to P$ of total genus $g$ total class $d$ and with $n$ legs.
Here, $n_h$ is the smallest positive integer such that $n_h\cdot h(\Gamma)$ has integral vertices, $|\aut(h)|$ is the order of the group of automorphisms of $h$, and $N_{h,\log}^{\xk_0}$ is the log Gromov-Witten invariant of the central fiber $\xk_0$ endowed with the restricted log structure form $\xk$.
Let $\overline{M}_h(\xk_0)$ be the moduli space of $h$-marked stable log maps to $\xk_0$ passing through the $n$ point constraints imposed at the marked points corresponding to the $n$ legs of $\Gamma$ (see \cite[Definition 2.31]{acgs-2020}).
The invariant $N_{h,\log}^{\xk_0}$ (see \cite{gs-2013} for the definition of log Gromov-Witten invariant) is defined as
$$
N_{h,\log}^{\xk_0}=\int_{[\overline{M}_h(\xk_0)]^{vir}}(-1)^{g-g'}\lambda_{g-g'}.
$$
From the vanishing property of $\lambda_g$ introduced in \cite[Lemma 8]{bou2019}, we know that $N_{h,\log}^{\xk_0}=0$ unless the genus $g_\Gamma$ of the graph $\Gamma$ satisfies $g_\Gamma\leq g'$.

We refine the polyhedral decomposition of $P$ such that it contains $h(\Gamma)$. Let $\wt\xk_0$ be the central fiber of the corresponding log modification of $\pi:\xk\to\cb$.
Denote by $\wt X^w$ the irreducible components of $\wt\xk_0$ labeled by the vertices $w$ of the refined polyhedral decomposition.
Equip every irreducible component $\wt X^w$ with the divisorial log structure defined by the intersection $\partial\wt X^w$ of $\wt X^w$ with the singular locus of $\xk_0$.
Note that the irreducible component $\wt X^0$ corresponding to the origin is a toric blow-up of the irreducible component $X$ of $\xk_0$.

For every vertex $v\in\Gamma$, let $h_v:\Gamma_v\to P$ be the rigid decorated tropical curve with one vertex obtained as the star of $v$ in $h:\Gamma\to P$.
Since the image $h(v)$ is a vertex of the refined polyhedral decomposition, it corresponds to an irreducible component $\wt X^{h(v)}$ of $\wt\xk_0$. 
We denote by $\overline{M}_{h_v}(\wt X^{h(v)})$ the moduli space of $h_v$-marked stable log maps to $\wt X^{h(v)}$ passing through the point constraints imposed at the marked points corresponding to the legs of $\Gamma$ adjacent to $v$.
Let $e\subset\Gamma$ be an edge of $\Gamma$.
Since $h(e)$ is contained in an edge of the refined polyhedral decomposition of $P$, it corresponds to an irreducible component $\wt D^{h(e)}$ of the singular locus of $\wt\xk_0$.
Let $1_{h(e)}\in H^0(\wt D^{h(e)})$ and $pt_{h(e)}\in H^2(\wt D^{h(e)})$ be the unit in cohomology and the class of a point in $\wt D^{h(e)}$, respectively. 
A {\it splitting data} $\sigma$ associates every half-edge $(e,v)$ of $\Gamma$ a cohomology class $\sigma_{e,v}\in H^*(\wt D^{h(e)})$ such that either $\sigma_{e,v}=1_{h(e)}$ and $\sigma_{e,v'}=pt_{h(e)}$, or $\sigma_{e,v}=pt_{h(e)}$ and $\sigma_{e,v'}=1_{h(e)}$, where $v$ and $v'$ are the two endpoints of the edge $e$.
Let $v\in\Gamma$ be a vertex with genus decoration $g_v$ and class decoration $d_v$, and $\sigma$ be a splitting data. Fix a non-negative integer $g_v'\leq g_v$. Let $N_{h,\sigma,\log}^{v,g_v'}$ be the log Gromov-Witten invariant defined as
\begin{equation}\label{eq:log-gw1}
N_{h,\sigma,\log}^{v,g_v'}:=\int_{[\overline M_{h_v}(\wt X^{h(v)})]^{vir}}(-1)^{g_v'}\lambda_{g_v'}\prod_{v\in e}ev_e^*(\sigma_{e,v}),
\end{equation}
where $ev_e:\overline M_{h_v}(\wt X^{h(v)})\to \wt D^{h(e)}$ are the evaluation maps at the corresponding marked points, and the product is taken over the edges of $\Gamma$ adjacent to $v$, that are considered as legs of $\Gamma_v$.

As in the proof of \cite[Proposition 13]{bou2019}, one obtains the gluing formula as follows (see page 18 of \cite{ab25} for the case $g'=0$):
\begin{equation}\label{eq:glue}
N_{h,\log}^{\xk_0}=\sum_{\sigma,g_v'}\prod_ew_e\prod_{v}N_{h,\sigma,\log}^{v,g_v'},
\end{equation}
where the sum is taken over the splitting data $\sigma$ and integers $g_v'$ such that $0\leq g_v'\leq g_v$ and $\sum g_v'=g-g'$, and $w_e$ are weights of the edges of $\Gamma$.
We only consider the splitting data and $g_v'$ so that the log Gromov-Witten invariants $N_{h,\sigma,\log}^{v,g_v'}\neq0$.
To conclude the proof of Theorem \ref{conj1}, we need the following lemma.

\begin{lemma}\label{lem:non-vanish}
Let $h:\Gamma\to P$ be a rigid tropical curve which contributes to the sum in the right-hand side of equation $(\ref{eq:decomp})$. 
Let $V_0(\Gamma)$ be the set of vertices $v$ of $\Gamma$ such that $h(v)=0\in P$ \textit{i.e.}, $\wt X^{h(v)}=\wt X^0$, and $V(\Gamma)$ be the set of vertices of $\Gamma$. Denote by $\nu:\wt\xk_0\to X$ the composition morphism of $\wt\xk_0\to\xk_0$ and the natural projection $\xk_0\to X$.
If $N_{h,\sigma,\log}^{v,g_v'}\neq0$, then we have the following results.
\begin{enumerate}[$(1)$]
    \item The genus $g_\Gamma$ of the graph $\Gamma$ is zero.
    \item The set $V_0(\Gamma)$ consists of only one vertex $v_0$, and we have $g_{v_0}-g'=g_{v_0}'$.
    \item For every vertex $v\notin V_0(\Gamma)$, we have $\nu_*d_v=0$ and $g_v=g_v'$.
    \item For each edge $e$ adjacent to the vertex $v_0\in V_0(\Gamma)$, the divisor $\wt D^{h(e)}$ is not an exceptional divisor of the blow-up $\wt X^0\to X$. Moreover, the weight $w_e$ of $e$ is equal to one and $\sigma_{e,v_0}=1\in H^0(\wt D^{h(e)})$.
\end{enumerate}
\end{lemma}

\begin{proof}[Proof of Lemma $\ref{lem:non-vanish}$]
    The dimension of the virtual class $[\overline M_{h_v}(\wt X^{h(v)})]^{vir}$ is
    $$
    c_1(\wt X^{h(v)})\cdot d_v+g_v-1+\sum_{v\in e}(1-w_e)-m_v,
    $$
    where $m_v$ is the number of legs of $\Gamma$ adjacent to $v$.
    The dimension of the classes on the right-hand side of equation (\ref{eq:log-gw1}) is $g_v'+\sum_{v\in e}\deg_\cb\sigma_{e,v}$, where $\deg_\cb\sigma_{e,v}$ is the complex degree of the cohomology class $\sigma_{e,v}$.
    If $N_{h,\sigma,\log}^{v,g_v'}\neq0$, we must have
    \begin{equation}\label{eq:dim-eq1}
    c_1(\wt X^{h(v)})\cdot d_v+g_v-1+\sum_{v\in e}(1-w_e)-m_v=g_v'+\sum_{v\in e}\deg_\cb\sigma_{e,v}.
    \end{equation}
    Sum the equalities given by (\ref{eq:dim-eq1}) over the vertices in $V_0(\Gamma)$, then we have
    $$
        \sum_{v\in V_0(\Gamma)}c_1(\wt X^{0})\cdot d_v+\sum_{v\in V_0(\Gamma)}g_v=|V_0(\Gamma)|+\sum_{v\in V_0(\Gamma)}(m_v+g_v')+\sum_{\substack{v\in V_0(\Gamma) \\ v\in e}}(\deg_\cb\sigma_{e,v}+w_e-1).
    $$
    Given $v\in V_0(\Gamma)$, let $E(v)$ be the set of edges $e$ adjacent to $v$ such that $\wt D^{h(e)}$ is an exceptional divisor of the toric blow-up $\wt X^0\to X$.
    In the toric blow-up $\wt X^0\to X$, we get the relation $c_1(\wt X^0)\cdot d_v=c_1(X)\cdot\nu_*d_v-\sum_{e\in E(v)}w_e$.
    Hence, we have
    $$
    \sum_{v\in V_0(\Gamma)}c_1(X)\cdot \nu_*d_v=|V_0(\Gamma)|+\sum_{v\in V_0(\Gamma)}(m_v+g_v'-g_v)+\sum_{\substack{v\in V_0(\Gamma) \\ v\in e}}(\deg_\cb\sigma_{e,v}+w_e-1)+\sum_{\substack{v\in V_0(\Gamma)\\e\in E(v)}}w_e.
    $$
    Since $d=\sum_v\nu_*d_v$ and $\sum_{v\in V_0(\Gamma)}m_v=c_1(X)\cdot d+g'-1$, the above equation gives us
    \begin{equation}\label{eq:dim-count1}
    \begin{aligned}
        1=&\sum_{v\notin V_0(\Gamma)}c_1(X)\cdot \nu_*d_v+|V_0(\Gamma)|+g'+\sum_{v\in V_0(\Gamma)}(g_v'-g_v)+\sum_{\substack{v\in V_0(\Gamma)\\e\in E(v)}}w_e\\
        &+\sum_{\substack{v\in V_0(\Gamma) \\ v\in e}}(\deg_\cb\sigma_{e,v}+w_e-1).
    \end{aligned}
    \end{equation}
    Recall that $g_v\geq g_v'$, so we have 
    $$
    \begin{aligned}
    \sum_{v\in V_0(\Gamma)}(g_v'-g_v)&\geq\sum_{v\in V(\Gamma)}(g_v'-g_v)\\
    &=g-g'-(g-g_\Gamma)=g_\Gamma-g'.
    \end{aligned}
    $$
    Hence, we obtain the following inequality from equality (\ref{eq:dim-count1}):
    \begin{equation}\label{eq:dim-count2}
    \begin{aligned}
        1\geq&\sum_{v\notin V_0(\Gamma)}c_1(X)\cdot \nu_*d_v+|V_0(\Gamma)|+g_\Gamma+\sum_{\substack{v\in V_0(\Gamma)\\e\in E(v)}}w_e+\sum_{\substack{v\in V_0(\Gamma) \\ v\in e}}(\deg_\cb\sigma_{e,v}+w_e-1).
    \end{aligned}
    \end{equation}
    Note that the classes $\nu_*d_v$ are effective and $X$ is a del Pezzo surface, so we have $c_1(X)\cdot\nu_*d_v\geq0$. Moreover, $c_1(X)\cdot\nu_*d_v=0$ if and only if $\nu_*d_v=0$.
    Since $c_1(X)\cdot d\geq2$ (see Section \ref{sec:trop-ref-inv}) and the $c_1(X)\cdot d+g'-1$ point constraints are in $\wt X^0$, we always have $V_0(\Gamma)\neq\emptyset$.
    All the other terms in the right-hand side of inequality (\ref{eq:dim-count2}) are non-negative, so we get: $\nu_*d_v=0$ for every $v\notin V_0(\Gamma)$, $|V_0(\Gamma)|=1$, $g_{\Gamma}=0$, $\deg_\cb\sigma_{e,v}=0$, $w_e=1$, and $E(v)=\emptyset$ for all $v\in V_0(\Gamma)$ and $v\in e$. Then we obtain from the equality (\ref{eq:dim-count1}) that $g_{v_0}=g_{v_0}'+g'$. Note that $\sum_{v\in V(\Gamma)}g_v'=\sum_{v\in V(\Gamma)}g_v-g'$, so we have $g_v'=g_v$ for $v\notin V_0(\Gamma)$.
\end{proof}

Note that $h:\Gamma\to P$ is a rigid tropical curve. From Lemma \ref{lem:non-vanish}, the graph $\Gamma$ is a graph of genus $0$, and we have the following.
\begin{itemize}
    \item There exists only one vertex $v_0$ of $\Gamma$ such that $h(v_0)=0$.
    \item For every ray $\rho_i$ of the fan $\Sigma_X$, there are exactly $d\cdot D_i$ vertices $v_{i,k}$, $1\leq k\leq d\cdot D_i$, in $\Gamma$ that are mapped to the primitive integral point $m_i$ by $h$, where $D_i$ is the toric divisor corresponding to the ray $\rho_i$. 
    \item The vertices $v_{i,k}$, $1\leq i\leq j, 1\leq k\leq d\cdot D_i$, are leaves of $\Gamma$, and $v_0$ is an inner vertex of $\Gamma$. Moreover, $\Gamma$ has no vertices other than the vertices $v_0$ and $v_{i,k}$.
    \item The weight of every end of $\Gamma$ is $1$, and $\Gamma$ has no edges other than ends.
    \item The class decoration $d_{v_0}$ of the inner vertex $v_0$ is $d$, and the the class decorations $d_{v_{i,k}}$ of the leaves are the class of a $\pb^1$-fiber of $\pb_i\to D_i$.
    \item $g_{v_0}'=g_{v_0}-g'$ and $g_{v_{i,k}}'=g_{v_{i,k}}$.
\end{itemize}
From Lemma \ref{lem:non-vanish}, there is only one splitting data $\sigma$ of $\Gamma$: $\sigma_{e_{i,k},v_0}=1_{m_i}$, and $\sigma_{e_{i,k},v_{i,k}}=pt_{m_i}$. 
From \cite[Lemma 15]{bou2019}, one obtains $N_{h,\sigma,\log}^{v_{i,k},g_{v_{i,k}}}=0$ if $g_{v_{i,k}}>0$, and $N_{h,\sigma,\log}^{v_{i,k},g_{v_{i,k}}}=1$ if $g_{v_{i,k}}=0$.
Since the genus of the graph $\Gamma$ is zero, we have $g_{v_0}=g$ and $g_{v_0}'=g-g'$.
Therefore, the genus decoration of $\Gamma$ is unique.
Let $\overline M_{g,n}^{\log}(X/D,d)$ be the moduli space of genus $g$ stable log maps to $X$ of class $d$ with $n$ marked points and $d\cdot D_i$ marked points having contact order one along $D_i$ for all $1\leq i\leq j$.
We denote the genus $g$ log Gromov-Witten invariant of $(X,D)$ of class $d$ by
$$
N_{g,d,n,\log}^{(X,D)}(g-g')=\int_{[\overline M_{g,n}^{\log}(X/D,d)]^{vir}}(-1)^{g-g'}\lambda_{g-g'}\prod_{i=1}^nev_i^*(pt),
$$
where $ev_i:\overline M_{g,n}^{\log}(X/D,d)\to X$ are the evaluation maps at the corresponding marked points.
From the gluing formula (\ref{eq:glue}) and the decomposition formula (\ref{eq:decomp}), we get that
$$
N_{g,d,n}^X(g-g')=N_{g,d,n,\log}^{(X,D)}(g-g').
$$
From \cite[Theorem 1]{bou2019}, we have
$$
N_{g',d}^{X,\trop}(q)=\left(2\sin\left(\frac{u}{2}\right)\right)^{2-d\cdot D-2g'}\sum_{g\geq g'}N_{g,d,n,\log}^{(X,D)}(g-g')u^{2g-2+d\cdot D}.
$$
Therefore, the result follows from the definition of the higher genus BPS polynomial (Definition \ref{def:g-bps-poly}).
\end{proof}

\subsection{del Pezzo surfaces of degrees $>3$}
In this subsection, we always assume that $k\leq5$.
It is well known that when $k\leq5$ the surface $X_k\cong X_k'$.

\begin{lemma}\label{lem:deg>3}
Let $g'\in\zb_{\geq0}$, and $d\in H_2(X_k;\zb)$ be a homology class such that $d\cdot[L]>0$, $d\cdot[E]\geq0$ and $n:=c_1(X_k)\cdot d+g'-1\geq0$. Then, for any $g\geq g'$
we have the equality
$$
N_{g,d,n}^{X_{k}}(g-g')=N_{g,d,n}^{X_k|E}(g-g',\emptyset,1^{d\cdot[E]}).
$$
\end{lemma}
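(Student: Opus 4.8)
The plan is to compute the absolute invariant $N_{g,d,n}^{X_k}(g-g')$ by degenerating $X_k$ to the normal cone of the conic $E$ and applying the degeneration formula, in the same spirit as the proof of Theorem \ref{thm:case1}. Concretely, I would degenerate $X_k$ so that the central fiber is $X_k\cup_E\nl$, where the bubble $\nl=\pb(\ol_E\oplus N_{E|X_k})$ is glued to $X_k$ transversally along $E\simeq Z^+$. Choosing the $n$ sections of the family so that the point constraints $p_1,\dots,p_n$ specialize to generic points of the $X_k$-component (away from $E$), the degeneration formula of \cite{li2002} expresses $N_{g,d,n}^{X_k}(g-g')$ as a sum over splitting configurations, each a product of a relative invariant of $X_k|E$ (carrying all $n$ point insertions) and a relative invariant of $\nl|Z^+$ (carrying none), glued along a tangency profile $\mu$ at $Z^+$ with matched dual cohomology weights, exactly as in the analysis of $\overline\gl^{\vec\mu_1,\vec\mu_2}_{g,g',d,n}$ in Section \ref{sec:4}.

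The heart of the argument is a dimension count on the bubble side. Writing a connected component mapping to $\nl$ as having class $h[Z^-]+l[F]$, genus $g_v$ and contact profile $\mu$ with $Z^+$ (so $\sum\mu=l$), the absence of point insertions, together with $m_v\le g_v$ and the fact that the number of $pt$ relative insertions is at most $l(\mu)$, forces $c_1(\nl)\cdot(h[Z^-]+l[F])\le 1+l$; since $c_1(\nl)\cdot[Z^-]=6-k$ and $c_1(\nl)\cdot[F]=2$, this reads $h(6-k)+l\le 1$. For $k\le 5$ this leaves only the empty component or a single fiber $[F]$ of genus $0$. Components carrying $[Z^-]$ must be excluded: a lone $[Z^-]$-component would disconnect the source curve, while any $\nl$-component meeting $Z^+$ that contains a $[Z^-]$-part has class at least $[Z^-]+[F]$, for which the inequality fails and the contribution vanishes. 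This is precisely where the hypothesis $k\le5$ enters, and it is the step I expect to be the main obstacle: for $k=5$ the section $Z^-$ is a $(-1)$-curve whose relative invariants are nonzero in isolation (Proposition \ref{prop:excep1}), so these terms cannot be dismissed by a naive dimension bound and must instead be killed by combining the bound with connectedness.

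Once only genus-$0$ fibers survive on the bubble side, I would identify the unique surviving configuration. Each fiber $[F]$ meets $Z^+$ in one point, contributes $1$ by \cite[Lemma 3.3]{bou2021}, and a dimension count forces its relative insertion at $Z^+$ to be $pt$, hence the dual insertion on the $X_k$-side to be $1\in H^0(E)$. Since fibers are rational, they carry none of the class $d$ (as $[F]$ maps to $0$ under $\nl\to E$) and none of the genus, so the $X_k$-component has class $d$, genus $g$, and carries all of $(-1)^{g-g'}\lambda_{g-g'}$ by the vanishing of the Hodge class on genus-$0$ tails \cite[Proposition 3.1]{bou2021}. Counting markings then gives the profile $1^{d\cdot[E]}$ with all trivial relative insertions, so the surviving relative factor on $X_k$ is exactly $N_{g,d,n}^{X_k|E}(g-g',\emptyset,1^{d\cdot[E]})$. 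Finally I would verify that the weight factors $\prod_e w_e=1$ and the graph automorphism factors of the degeneration combine with the $\tfrac{1}{|\aut(\mu,\delta^\circ)|}$ normalization of the relative invariant to leave no spurious constant, yielding the claimed equality; the degenerate case $n=0$ (which forces $d\cdot[L]=1$, $d\cdot[E]=0$, $g'=0$) I would treat directly.
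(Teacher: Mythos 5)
Your proposal is correct and follows essentially the same route as the paper's proof: degenerate $X_k$ to the normal cone of $E$, keep all $n$ point insertions on the $X_k$ component, and use a dimension count on the bubble $\nl$ together with \cite[Lemma 3.3]{bou2021} (for the fiber contributions) and \cite[Proposition 3.1]{bou2021} (for the splitting of the Lambda class) to conclude that every bubble component is a genus-zero fiber whose relative marking carries the point class, leaving a single $X_k$-vertex of class $d$, genus $g$, and profile $1^{d\cdot[E]}$ with trivial insertions. The one place where you do more than the paper is the $k=5$ case: the paper writes bubble classes as $h_v[Z^+]+l_v[F]$ and passes from $2h_v+l_v\le 1$ directly to $(h_v,l_v)=(0,1)$, which tacitly uses $h_v,l_v\ge0$; for $k=5$ the effective class $[Z^-]=[Z^+]-[F]$ (i.e. $(h_v,l_v)=(1,-1)$) also satisfies the bound and has nonvanishing relative invariants, so it cannot be killed by dimension reasons alone. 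Your exclusion of it — a $[Z^-]$-component is disjoint from $Z^+$, hence an isolated vertex of the dual graph, contradicting connectedness once one notes that the point constraints (and, when $n=0$, the hypotheses $d\cdot[L]>0$, $d\cdot[E]\ge0$) force the graph to contain an $X_k$-vertex — supplies a step that the paper's proof passes over silently, so your write-up is, if anything, slightly more complete than the original.
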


\begin{proof}
    We first degenerate $X_k$ to the normal cone of the smooth conic $E$, and the central fiber is $X_k\cup\nl$,
    where $\nl=\pb(\ol_E\oplus N_{E|X_k'})$.
    Then we apply the degeneration formula to this degeneration and keep the $n$ point insertions in $X_k$. Suppose that the non-zero terms in the degeneration formula are indexed by graphs $\Gamma$.
    Let $Z^+=\pb(0\oplus N_{E|X_k'})$, and $F$ be a fiber in $\nl$.
    Let $v\in\Gamma$ be a vertex corresponding to a curve in $\nl$ of class $h_v[Z^+]+l_v[F]$, genus $g_v$ and contact orders $\mu$, where $\mu$ is a partition of $h_v(k-4)+l_v$.
    The expected dimension of the moduli space of relative stable maps corresponding to the vertex $v$ is $2h_v+l_v+g_v-1+l(\mu)$. The maximal degree of an insersion is $g_v+l(\mu)$. By a dimension counting, the contribution of $\Gamma$ is $0$ except for the case that $2h_v+l_v+g_v-1+l(\mu)\leq g_v+l(\mu)$. Hence, $l_v=1$ and $h_v=0$. By \cite[Lemma 3.3]{bou2021}, the contribution of the vertex $v$ is $1$ if $g_v=0$; otherwise, the contribution of $v$ is $0$. Since the graph $\Gamma$ is connected, there is only one vertex $v'$ of $\Gamma$ corresponding to curves in $X_k$. The graph $\Gamma$ has to be genus $0$ and $g_{v'}=g$. By \cite[Proposition 3.1]{bou2021} the Lambda class at the vertex $v'$ is $\lambda_{g-g'}$.
\end{proof}

\begin{theorem}\label{thm:2}
Let $g'\in\zb_{\geq0}$, and $d\in H_2(X_k;\zb)$ be a homology class such that $d\cdot[L]>0$, $d\cdot[E]\geq0$ and $n:=c_1(X_k)\cdot d+g'-1\geq0$. Then,
we have the equality
$$
\begin{aligned}
P_{g',d}^{X_k}(u)
=N_{q}^{\emptyset,(1^{d\cdot [E]})}(X_k,d,g'),
\end{aligned}
$$
where $q=e^{iu}=\sum_{m\geq0}\frac{(iu)^m}{m!}$.
\end{theorem}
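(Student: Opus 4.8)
The plan is to assemble the statement from three ingredients already in place: Lemma \ref{lem:deg>3}, which rewrites each absolute Gromov-Witten invariant appearing in $P_{g',d}^{X_k}(u)$ as a relative invariant of $X_k|E$; the correspondence Theorem \ref{thm:1}, which expresses such relative invariants through the refined count $N_q^{\emptyset,(1^{d\cdot[E]})}(X_k,d,g')$; and the identity $2\sin(\frac{u}{2})=(-i)(q^{\frac{1}{2}}-q^{-\frac{1}{2}})$ under $q=e^{iu}$, already used in Proposition \ref{prop:vanishing-X_k2}. Since $k\leq5$ forces $X_k\cong X_k'$, the relative invariants of $X_k|E$ and of $X_k'|E$ agree, so Theorem \ref{thm:1} applies verbatim with $\mu_1=\emptyset$ and $\mu_2=(1^{d\cdot[E]})$.

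First I would record the numerical identity $c_1(X_k)\cdot d=d\cdot[L]+d\cdot[E]$. Writing $d=a[L]-\sum_i b_i[E_i]$ and using $[E]=2[L]-\sum_i[E_i]$ together with $c_1(X_k)=3[L]-\sum_i[E_i]$, one computes $c_1(X_k)\cdot d=3a-\sum_i b_i=a+(2a-\sum_i b_i)=d\cdot[L]+d\cdot[E]$. The point of this identity is to reconcile the two conventions for $n$: the integer $n=c_1(X_k)\cdot d+g'-1$ of the present statement coincides with the quantity $d\cdot[L]-1+g'+l(\mu_2)$ that governs Theorem \ref{thm:1}, because for $\mu_1=\emptyset$, $\mu_2=(1^{d\cdot[E]})$ we have $l(\mu_1)=0$ and $l(\mu_2)=d\cdot[E]$.

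Next I would substitute Lemma \ref{lem:deg>3} into Definition \ref{def:g-bps-poly}, replacing each $N_{g,d,n}^{X_k}(g-g')$ by $N_{g,d,n}^{X_k|E}(g-g',\emptyset,1^{d\cdot[E]})$, and then extract a factor $u^{d\cdot[L]}$ so that the exponent $2g-2+c_1(X_k)\cdot d=2g-2+d\cdot[L]+d\cdot[E]$ matches the exponent $2g-2+l(\mu_1)+l(\mu_2)=2g-2+d\cdot[E]$ on the left-hand side of Theorem \ref{thm:1}. Applying Theorem \ref{thm:1} then cancels this $u^{d\cdot[L]}$ against the $u^{-d\cdot[L]}$ the theorem produces, and contributes a factor $((-i)(q^{\frac{1}{2}}-q^{-\frac{1}{2}}))^{n+g'-1}$; using the reconciliation above, the exponent is $n+g'-1=c_1(X_k)\cdot d+2g'-2$.

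Finally, rewriting the prefactor $(2\sin(\frac{u}{2}))^{2-2g'-c_1(X_k)\cdot d}$ of Definition \ref{def:g-bps-poly} as $((-i)(q^{\frac{1}{2}}-q^{-\frac{1}{2}}))^{2-2g'-c_1(X_k)\cdot d}$, I would observe that its exponent is exactly the negative of $c_1(X_k)\cdot d+2g'-2$, so the two powers of $(-i)(q^{\frac{1}{2}}-q^{-\frac{1}{2}})$ cancel completely and leave precisely $N_q^{\emptyset,(1^{d\cdot[E]})}(X_k,d,g')$, as claimed. I do not anticipate any genuine obstacle here: all the geometric content sits in Lemma \ref{lem:deg>3} and Theorem \ref{thm:1}, and the only care required is the bookkeeping of the two conventions for $n$, the deliberate shift of the $u$-exponent by $d\cdot[L]$, and the exact cancellation of the two powers of $2\sin(\frac{u}{2})$.
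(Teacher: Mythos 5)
Your proposal is correct and follows essentially the same route as the paper: substituting Lemma \ref{lem:deg>3} into Definition \ref{def:g-bps-poly}, applying Theorem \ref{thm:1} with $\mu_1=\emptyset$, $\mu_2=(1^{d\cdot[E]})$, and cancelling the powers of $(-i)(q^{\frac{1}{2}}-q^{-\frac{1}{2}})$ under $q=e^{iu}$. The only difference is that you spell out the bookkeeping (the identity $c_1(X_k)\cdot d=d\cdot[L]+d\cdot[E]$ reconciling the two conventions for $n$, and the shift by $u^{d\cdot[L]}$) that the paper leaves implicit.
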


\begin{proof}
From the definition of $P_{g',d}^{X_k}(u)$ (see equation (\ref{eq:g-bps-poly})), Lemma \ref{lem:deg>3} and Theorem \ref{thm:1}, we have
$$
\begin{aligned}
    P_{g',d}^{X_k}(u)&=\left((-i)(q^\frac{1}{2}-q^{-\frac{1}{2}})\right)^{2-2g'-c_1(X_k)\cdot d}\sum_{g\geq g'}N_{g,d,n}^{X_k|E}(g-g',\emptyset,1^{d\cdot[E]})u^{2g-2+c_1(X_k)\cdot d}\\
    &=N_{q}^{\emptyset,(1^{d\cdot [E]})}(X_k,d,g').
\end{aligned}
$$
\end{proof}

\subsection{del Pezzo surfaces of degree $3$}
Let $\pi:\yl\to\cb$ be the classical flat degeneration of $X_6$ with central fiber $\pi^{-1}(0)=X_6'\cup(\pb^1\times\pb^1)$. 
In the central fiber, $X_6'$ intersects $\pb^1\times\pb^1$ transversally. 
The intersection $X_6'\cap(\pb^1\times\pb^1)$ is the rational curve $E$ in $X_6'$, 
and is a hyperplane section in $\pb^1\times\pb^1$ (see the proof of \cite[Proposition 6.1]{bru2015} or \cite[Section 4.2]{iks2013a}).

\begin{lemma}\label{lem:deg=3}
Let $g'\in\zb_{\geq0}$, and $d\in H_2(X_6;\zb)$ be a homology class such that $d\cdot[L]>0$, $d\cdot[E]\geq0$ and $n:=c_1(X_6)\cdot d+g'-1\geq0$. Then, for any $g\geq g'$
we have the equality
$$
N_{g,d,n}^{X_{6}}(g-g')=\sum_{k\geq0}\binom{d\cdot[E]+2k}{k}N_{g,d-k[E],n}^{X_6'|E}(g-g',\emptyset,1^{d\cdot[E]+2k}),
$$
where the sum is taken over $k$ such that $(d-k[E])\cdot[L]>0$.
\end{lemma}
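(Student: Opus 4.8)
The plan is to run the degeneration formula of \cite{li2002} on the family $\pi:\yl\to\cb$ exactly as in the proof of Lemma \ref{lem:deg>3}, but now tracking the extra combinatorics coming from the quadric component. First I would specialize all $n$ point constraints to lie on the component $X_6'$ of the central fiber $\yl^{-1}(0)=X_6'\cup(\pb^1\times\pb^1)$, so that $X_6'$ carries every marking dual to a point while $\pb^1\times\pb^1$ carries none. The degeneration formula then writes $N_{g,d,n}^{X_6}(g-g')$ as a sum over decorated graphs $\Gamma$ of products of connected relative invariants of $(X_6',E)$ and of $(\pb^1\times\pb^1,E')$ glued along $E\cong E'$, where $E'$ is the $(1,1)$-class. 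Since the relative insertions on the $X_6'$ side are all the trivial class $1\in H^0(E)$ (the tangency type being $(\emptyset,1^{d\cdot[E]+2k})$), the K\"unneth splitting of the diagonal forces every relative insertion on the $\pb^1\times\pb^1$ side to be a point class.

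Next I would carry out the vanishing analysis on the quadric, in parallel with Lemma \ref{lem:deg>3}. For a vertex $v$ of $\Gamma$ supported on $\pb^1\times\pb^1$, of bidegree $(a_v,b_v)$, genus $g_v$ and Hodge degree $m_v$, with all contacts of order one (so that $l(\mu_v)=a_v+b_v$) and all relative evaluations at a point, comparing the virtual dimension $2(a_v+b_v)+g_v-1$ with the integrand degree $m_v+(a_v+b_v)$ yields $a_v+b_v+g_v-1=m_v\leq g_v$, hence $a_v+b_v=1$ and $m_v=g_v$. Thus every quadric component is a ruling meeting $E'$ in a single point, and by \cite[Lemma 3.3]{bou2021} its contribution is $1$ when $g_v=0$ and $0$ otherwise. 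Consequently each ruling is attached to the $X_6'$-curve along a single node, the graph $\Gamma$ is a tree, and the whole Hodge degree concentrates on the unique $X_6'$-vertex $v'$; by \cite[Proposition 3.1]{bou2021} the class there is $\lambda_{g-g'}$ with $g_{v'}=g$, so the $X_6'$-factor is exactly $N_{g,d-k[E],n}^{X_6'|E}(g-g',\emptyset,1^{d\cdot[E]+2k})$ once the quadric part is shown to carry the class $k[E]$.

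It remains to produce the binomial weight, the combinatorial heart of the argument, adapted from \cite[Proposition 6.1]{bru2015}. The two rulings of $\pb^1\times\pb^1$ behave oppositely under specialization to the generic fiber $X_6$: writing the quadric as the total space of $\pr_1:\pb^1\times\pb^1\to\pb^1$, the curve $E'$ maps isomorphically to the base (which I identify with $E$), the fiber ruling $(0,1)$ is contracted and so has class $0$ in $H_2(X_6;\zb)$, while the section ruling $(1,0)$ maps to $[E]$. Hence if the quadric curve consists of $A$ sections and $B$ fibers, then $A+B=d\cdot[E]+2k$ by the matching of contact orders with $(d-k[E])\cdot[E]=d\cdot[E]+2k$, while the class constraint $A[E]=k[E]$ forces $A=k$ and $B=d\cdot[E]+k$. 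The $d\cdot[E]+2k$ contact points on $E$ are prescribed by the $X_6'$-side, and a configuration is determined by choosing which $k$ of them support section rulings; each choice contributes $1$, producing the factor $\binom{d\cdot[E]+2k}{k}$.

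The hard part will be the homological bookkeeping of the last paragraph: one must verify carefully, from the explicit geometry of the degeneration (see the proof of \cite[Proposition 6.1]{bru2015} and \cite[Section 4.2]{iks2013a}), that the two rulings specialize to $0$ and $[E]$ respectively, that no edge weight or automorphism factor in the degeneration formula spoils the count (here every contact order is $1$, so $w_e=1$ and $|\aut(\Gamma)|$ is exactly absorbed by the labelling of the contact points), and that the $X_6'$-side remains a single connected relative stable map of class $d-k[E]$. Granting this, summing over $k$ with $(d-k[E])\cdot[L]>0$, equivalently $d\cdot[L]>2k$, which is precisely the effectivity range, assembles the right-hand side and completes the proof.
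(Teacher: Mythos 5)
Your proposal is correct and follows essentially the same route as the paper's proof: the same degeneration of $X_6$ to $X_6'\cup(\pb^1\times\pb^1)$ with all point insertions kept on $X_6'$, the same dimension count on the quadric forcing each of its components to be a genus-zero ruling curve of bidegree $(1,0)$ or $(0,1)$ carrying a point-class relative insertion (hence trivial insertions and tangency type $(\emptyset,1^{d\cdot[E]+2k})$ on the $X_6'$ side), the concentration of the Lambda class $\lambda_{g-g'}$ on the unique $X_6'$-vertex, and the binomial count $\binom{d\cdot[E]+2k}{k}$ of ruling assignments. The one step you wave at (``the effectivity range'') that the paper actually proves is that no contributing graph has $(d-k[E])\cdot[L]=0$: the paper excludes $d_{v'}=a_i[E_i]$ via rigidity of the exceptional curves when $n>0$, and by a separate connectedness/adjacency argument when $n=0$.
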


\begin{proof}
    We apply the degeneration formula \cite{li2002} to the degeneration $\pi:\yl\to\cb$ of $X_6$ and keep the $n$ point insertions in $X_6'$. Suppose that the non-zero terms in the degeneration formula are indexed by graphs $\Gamma$.
    Let $L_1=\pb^1\times\{0\}$ and $L_2=\{0\}\times\pb^1$ be the two divisors in $\pb^1\times\pb^1$, then the homology classes $[L_1]$ and $[L_2]$ generate the homology group $H_2(\pb^1\times\pb^1;\zb)$.
    Let $v\in\Gamma$ be a vertex corresponding to a curve in $\pb^1\times\pb^1$ of class $h_v[L_1]+l_v[L_2]$, genus $g_v$ and contact orders $\mu$, where $\mu$ is a partition of $[E]\cdot(h_v[L_1]+l_v[L_2])=h_v+l_v$.
    The expected dimension of the moduli space of relative stable maps in $\pb^1\times\pb^1$ corresponding to the vertex $v$ is $h_v+l_v+g_v-1+l(\mu)$. The maximal degree of an insersion is $g_v+l(\mu)$. By a dimension counting, the contribution of $\Gamma$ is $0$ except for the case that $h_v+l_v+g_v-1+l(\mu)\leq g_v+l(\mu)$. Hence, $(h_v,l_v)=(1,0)$ or $(0,1)$. By \cite[Lemma 3.3]{bou2021}, the contribution of the vertex $v$ is $1$ if $g_v=0$; otherwise, the contribution of $v$ is $0$. Moreover, the relative insertion of the vertex $v$ is a point insertion. We may assume that all vertices $v\in\Gamma$ corresponding to the curves in $\pb^1\times\pb^1$ have the genus $g_v=0$ and $(h_v,l_v)=(1,0)$ or $(0,1)$.

    Each vertex $v\in\Gamma$ corresponding to a curve in $\pb^1\times\pb^1$ is connected to a vertex $v'\in\Gamma$ corresponding to curves in $X_6'$ by one edge. Since the graph $\Gamma$ is connected, there is only one vertex $v'$ in $\Gamma$ corresponding to curves in $X_6'$. The graph $\Gamma$ has to be genus $0$ and $g_{v'}=g$. By \cite[Proposition 3.1]{bou2021} the Lambda class at the vertex $v'$ is $\lambda_{g-g'}$. The homology class $d_{v'}=d-k[E]$ for some $k\geq0$. If $d_{v'}\cdot[L]=0$, the class $d_{v'}$ must be of the form $a_i[E_i]$ with $a_i\geq1$. 
    Note that the exceptional curve $E_i$ is rigid. 
    If $n>0$, this is not possible. When $n=0$, from Remark \ref{prop:vanishing-X_k1} and Proposition \ref{prop:excep1}, $v'$ is adjacent to only one vertex in $\Gamma$ that corresponds to a curve in $\pb^1\times\pb^1$. This is a contradiction.
    Hence, the homology class $d_{v'}$ always satisfies $d_{v'}\cdot[L]>0$.
    We may assume that the sum of the homology classes $\sum\limits_vd_v$ of the vertices $v$ corresponding to curves in $\pb^1\times\pb^1$ is $k[L_1]+(k+d\cdot[E])[L_2]$. We have 
    $$
    \binom{d\cdot[E]+2k}{k}
    $$
    choices to choose the $k$ vertices $v\in\Gamma$ with $(h_v,l_v)=(1,0)$ and the $d\cdot[E]+k$ vertices with $(h_v,l_v)=(0,1)$ among the $d\cdot[E]+2k$ vertices in $\Gamma$ corresponding to curves in $\pb^1\times\pb^1$.
\end{proof}

\begin{theorem}\label{thm:3}
Let $g'\in\zb_{\geq0}$, and $d\in H_2(X_6;\zb)$ be a homology class such that $d\cdot[L]>0$, $d\cdot[E]\geq0$ and $n:=c_1(X_6)\cdot d+g'-1\geq0$. Then,
we have the equality
\begin{equation}\label{eq:thm:3}
\begin{aligned}
P_{g',d}^{X_6}(u)&=\sum_{k\geq0}\binom{d\cdot[E]+2k}{k}N_{q}^{\emptyset,(1^{d\cdot [E]+2k})}(X_6',d-k[E],g'),
\end{aligned}
\end{equation}
where $q=e^{iu}=\sum_{m\geq0}\frac{(iu)^m}{m!}$.
\end{theorem}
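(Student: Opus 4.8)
The plan is to prove Theorem \ref{thm:3} in exactly the same way as Theorem \ref{thm:2}: by combining the defining formula of the higher genus BPS polynomial with the degeneration computation of Lemma \ref{lem:deg=3} and the correspondence of Theorem \ref{thm:1}. First I would start from formula (\ref{eq:g-bps-poly}) for $P_{g',d}^{X_6}(u)$ and write the prefactor $\left(2\sin(\frac{u}{2})\right)^{2-2g'-c_1(X_6)\cdot d}$ as $\left((-i)(q^{\frac{1}{2}}-q^{-\frac{1}{2}})\right)^{2-2g'-c_1(X_6)\cdot d}$, using the identity $2\sin(\frac{u}{2})=(-i)(q^{\frac{1}{2}}-q^{-\frac{1}{2}})$ employed throughout Section \ref{sec:4}. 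I would then substitute Lemma \ref{lem:deg=3} to rewrite the absolute invariant $N_{g,d,n}^{X_6}(g-g')$ as the sum over $k$ of $\binom{d\cdot[E]+2k}{k}N_{g,d-k[E],n}^{X_6'|E}(g-g',\emptyset,1^{d\cdot[E]+2k})$, and interchange the sum over $k$ with the sum over $g$. This interchange is harmless because the sum over $k$ is in fact finite: the constraint $(d-k[E])\cdot[L]>0$ in Lemma \ref{lem:deg=3} forces $k<d\cdot[L]/2$.

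The next step is to apply Theorem \ref{thm:1} to each inner generating series, with curve class $d'=d-k[E]$, $\mu_1=\emptyset$, and $\mu_2=(1^{d\cdot[E]+2k})$. Before doing so I would verify that the indices are consistent. Using $[E]^2=-2$ and $[E]\cdot[L]=2$ in $X_6'$ (coming from $[E]=2[L]-\sum_{i=1}^6[E_i]$), one gets $(d-k[E])\cdot[E]=d\cdot[E]+2k$, which confirms that $(1^{d\cdot[E]+2k})$ is a partition of $d'\cdot[E]$ with all parts equal to $1$, and $(d-k[E])\cdot[L]=d\cdot[L]-2k$. Moreover, since $-K_{X_6}=[L]+[E]$ gives $c_1(X_6)\cdot d=d\cdot[L]+d\cdot[E]$, the integer $n=d'\cdot[L]-1+g'+l(\mu_2)$ produced by Theorem \ref{thm:1} equals $c_1(X_6)\cdot d+g'-1$, which is exactly the number of point insertions kept on $X_6'$ in Lemma \ref{lem:deg=3}; hence the two conventions for $n$ agree. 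The restriction $(d-k[E])\cdot[L]>0$ is precisely the hypothesis $d'\cdot[L]>0$ needed to invoke Theorem \ref{thm:1}, while $d'\cdot[E]=d\cdot[E]+2k\geq0$ holds automatically.

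Finally I would collect the powers of $u$ and of $(-i)(q^{\frac{1}{2}}-q^{-\frac{1}{2}})$. To match the exponent $2g-2+l(\mu_1)+l(\mu_2)=2g-2+d\cdot[E]+2k$ on the left of Theorem \ref{thm:1} against the exponent $2g-2+c_1(X_6)\cdot d$ in (\ref{eq:g-bps-poly}), I factor out $u^{d\cdot[L]-2k}=u^{(d-k[E])\cdot[L]}$; applying Theorem \ref{thm:1} then contributes $u^{-(d-k[E])\cdot[L]}$, so all powers of $u$ cancel and each term becomes $\binom{d\cdot[E]+2k}{k}N_{q}^{\emptyset,(1^{d\cdot[E]+2k})}(X_6',d-k[E],g')\left((-i)(q^{\frac{1}{2}}-q^{-\frac{1}{2}})\right)^{n+g'-1}$, with $n+g'-1=c_1(X_6)\cdot d+2g'-2$. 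The $q$-power from the prefactor and this $q$-power have exponents summing to $(2-2g'-c_1(X_6)\cdot d)+(c_1(X_6)\cdot d+2g'-2)=0$, so they cancel as well, leaving exactly the right-hand side of (\ref{eq:thm:3}). The main obstacle is purely bookkeeping: keeping the four intertwined exponents (the two $u$-powers and the two $(q^{\frac{1}{2}}-q^{-\frac{1}{2}})$-powers) straight and confirming that they cancel in pairs. The one genuinely geometric input beyond Lemma \ref{lem:deg=3} and Theorem \ref{thm:1} is the intersection data $[E]^2=-2$, $[E]\cdot[L]=2$ and $c_1(X_6)\cdot d=d\cdot[L]+d\cdot[E]$ on the cubic surface $X_6$, which makes the two notions of $n$ coincide and guarantees the cancellations.
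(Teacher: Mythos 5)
Your proposal is correct and follows essentially the same route as the paper's own proof: substitute Lemma \ref{lem:deg=3} into the generating series, apply Theorem \ref{thm:1} with $d'=d-k[E]$, $\mu_1=\emptyset$, $\mu_2=(1^{d\cdot[E]+2k})$, and match exponents against the definition (\ref{eq:g-bps-poly}). Your explicit verification of the intersection numbers $[E]^2=-2$, $[E]\cdot[L]=2$, $c_1(X_6)\cdot d=d\cdot[L]+d\cdot[E]$ and of the agreement of the two conventions for $n$ makes precise the bookkeeping that the paper leaves implicit, and all of it checks out.
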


\begin{proof}
    From Lemma \ref{lem:deg=3},  we have
    \begin{align*}
    &\sum_{g\geq g'}N_{g,d,n}^{X_{6}}(g-g')u^{2g-2+d\cdot [E]}\\
    &=\sum_{k\geq0}\binom{d\cdot[E]+2k}{k}\left(\sum_{g\geq g'}N_{g,d-k[E],n}^{X_6'|E}(g-g',\emptyset,1^{d\cdot[E]+2k})u^{2g-2+d\cdot [E]}\right).
    \end{align*}
    By Theorem \ref{thm:1}, one has
    \begin{align*}
    &\sum_{g\geq g'}N_{g,d-k[E],n}^{X_6'|E}(g-g',\emptyset,1^{d\cdot[E]+2k})u^{2g-2+d\cdot [E]+2k-2k}\\
    &=u^{-(d-k[E])\cdot[L]-2k}N_{q}^{\emptyset,(1^{d\cdot [E]+2k})}(X_6',d-k[E],g')\left((-i)(q^\frac{1}{2}-q^{-\frac{1}{2}})\right)^{n+g'-1}.
    \end{align*}
    Therefore, we obtain equation $(\ref{eq:thm:3})$ from the definition of $P_{g',d}^{X_6}(u)$ (see equation (\ref{eq:g-bps-poly})).
\end{proof}

Summarizing results in Theorem \ref{thm:case1}, Theorem \ref{thm:2} and Theorem \ref{thm:3}, we have 

\begin{theorem}\label{thm:main1}
Let $X$ be del Pezzo surfaces of degree $\geq 3$ or Hirzebruch surfaces. Then
 $ P_{g',d}^X(u)$ is a Laurent polynomial in $q$ with integer coefficients under the change of variables $q=e^{iu}$.
\end{theorem}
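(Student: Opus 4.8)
The plan is to deduce Theorem \ref{thm:main1} by combining the three special cases already established, reducing in every case to the statement that the relevant generating series equals a refined count of floor diagrams $N_q^{\mu_1,\mu_2}(X_k',d,g')$, which is manifestly a Laurent polynomial in $q$ with integer coefficients because each $\mult_q(\dl,m)$ (Definition \ref{def:2.4}) is built from quantum integers $[w(e)]_q\in\zb[q^{\pm\frac12}]$ and integer factors, and the sum in \eqref{eq:2.2} is finite. First I would observe that the whole point of Definition \ref{def:g-bps-poly} is that $P^X_{g',d}(u)$ is the power series whose integrality we must certify; so the task is purely to show that in each of the three families, $P^X_{g',d}(u)$ coincides, after $q=e^{iu}$, with a member of $\zb[q^{\pm}]$.

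Concretely I would split into the three cases as follows. For $X=\pb^2$ or $X=\fb_k$ (the projective plane and Hirzebruch surfaces), Theorem \ref{thm:case1} gives $P^X_{g',d}(u)=N^{X,\trop}_{g',d}(q)$, and the Block--G\"ottsche polynomial $N^{X,\trop}_{g',d}(q)$ defined in \eqref{eq:trop-ref-inv} lies in $\zb[q^{\pm\frac12}]$; since it is a finite sum of products $\prod_v[\mult_v]_q$ of quantum integers it is automatically a symmetric Laurent polynomial, and one checks it is integral in $q$ (not merely $q^{1/2}$) by the standard parity argument on the total multiplicity. For del Pezzo surfaces of degree $>3$, i.e. $X=X_k\cong X_k'$ with $k\le 5$, Theorem \ref{thm:2} gives
\begin{equation*}
P^{X_k}_{g',d}(u)=N_q^{\emptyset,(1^{d\cdot[E]})}(X_k,d,g'),
\end{equation*}
which is a refined count of floor diagrams relative to a conic and hence lies in $\zb[q^{\pm}]$ directly from \eqref{eq:2.2} and Definition \ref{def:2.4}. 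For the remaining del Pezzo surface of degree $3$, namely $X_6$, Theorem \ref{thm:3} expresses
\begin{equation*}
P^{X_6}_{g',d}(u)=\sum_{k\ge0}\binom{d\cdot[E]+2k}{k}N_q^{\emptyset,(1^{d\cdot[E]+2k})}(X_6',d-k[E],g'),
\end{equation*}
a finite integer-coefficient combination of refined floor-diagram counts, hence again in $\zb[q^{\pm}]$.

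The key structural input throughout is the correspondence Theorem \ref{thm:1}, which converts the trigonometric generating series on the Gromov--Witten side into the refined floor-diagram count after the substitution $q=e^{iu}$; this is what allows us to recognize $P^X_{g',d}(u)$ as a Laurent polynomial rather than merely a power series in $u$. The one point that needs genuine care, rather than bookkeeping, is ensuring that the powers of $\bigl((-i)(q^{\frac12}-q^{-\frac12})\bigr)$ and the prefactor $\bigl(2\sin(\tfrac{u}{2})\bigr)^{2-2g'-c_1(X)\cdot d}$ appearing in Definition \ref{def:g-bps-poly} combine with the exponent $n+g'-1+l(\mu_1)$ produced by Theorem \ref{thm:1} so that all fractional and negative powers cancel and the result lands in $\zb[q^{\pm}]$ with no leftover half-integer powers of $q$ and no denominators. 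I expect this reconciliation of exponents and the half-integer-power parity check to be the main (though still routine) obstacle; once it is confirmed in each of the three cases, integrality of the coefficients follows immediately because every surviving factor is a quantum integer or an ordinary integer, and Theorem \ref{thm:main1} is simply the union of the three cases.
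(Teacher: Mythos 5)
Your proposal is correct and takes essentially the same route as the paper, whose entire proof of Theorem \ref{thm:main1} is to combine Theorems \ref{thm:case1}, \ref{thm:2} and \ref{thm:3} exactly as you do (the exponent reconciliation you worry about at the end is already carried out inside the proofs of those three theorems, so nothing remains to check there). The only gloss is your claim that $\mult_q(\dl,m)$ from Definition \ref{def:2.4} is manifestly integral: in general it contains rational factors $[w(e)]_q/w(e)$ and possible half-integer powers of $q$, but in the cases actually invoked ($\mu_1=\emptyset$, $\mu_2=(1^{d\cdot[E]+2k})$) it reduces to $\prod_{e\in\eg^\circ(\dl)}[w(e)]_q^2$, so the refined counts do lie in $\zb[q^{\pm}]$ as you assert.
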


\begin{remark}
    It follows from Lemma \ref{lem:0-bps-poly} and Theorem \ref{thm:main1} that the polynomial $n^{X}_{0,d}(q)$ is equal to the BPS polynomial $n_d^X(q)$.
\end{remark}

\subsection{Higher genus relative BPS polynomials}\label{subsec:higher-rel-bps}
We introduce a relative version of the BPS invariants.
\begin{definition}\label{def:rel-bps-inv}
Let $g'\in\zb_{\geq0}$, and $d\in H_2(X_k';\zb)$ be a homology class such that $d\cdot[L]>0$, $d\cdot[E]\geq0$ and $n:=c_1(X_k')\cdot d+g'-1\geq0$. Suppose that $\vec\mu=(\vec\mu_1,\vec\mu_2)$ is an ordered partition of $d\cdot[E]$. For every $g\geq g'$, we define the \textit{relative BPS invariants $n_{g,d}^{X_k'|E}(g',\mu_1,\mu_2)\in\qb$} by the formula
\begin{equation}
\begin{aligned}
    &\sum_{g\geq g'}N_{g,d,n}^{X_k'|E}(g-g',\mu_1,\mu_2)u^{2g-2+d\cdot[L]+l(\mu_1)+l(\mu_2)}\\
    &=\left(\prod_{i=1}^{l(\mu_1)}\frac{1}{\mu_1^{(i)}}2\sin(\frac{\mu_1^{(i)}u}{2})\right)\left(\prod_{i=1}^{l(\mu_2)}\frac{1}{\mu_2^{(i)}}2\sin(\frac{\mu_2^{(i)}u}{2})\right)\sum_{g\geq g'}n_{g,d}^{X_k'|E}(g',\mu_1,\mu_2)\left(2\sin(\frac{u}{2})\right)^{2g-2+d\cdot[L]}.
\end{aligned}
\end{equation}
\end{definition}

\begin{definition}\label{def:rel-bps-poly}
Let $g'\in\zb_{\geq0}$, and $d\in H_2(X_k';\zb)$ be a homology class such that $d\cdot[L]>0$, $d\cdot[E]\geq0$ and $n:=c_1(X_k')\cdot d+g'-1\geq0$. Suppose that $\vec\mu=(\vec\mu_1,\vec\mu_2)$ is an ordered partition of $d\cdot[E]$. We define the \textit{genus $g'$ relative BPS polynomial $n_{g',d,(\mu_1,\mu_2)}^{X_k'|E}(q)$} by the formula
\begin{equation}
\begin{aligned}
    n_{g',d,(\mu_1,\mu_2)}^{X_k'|E}(q):=&\sum_{g\geq g'}n_{g,d}^{X_k'|E}(g',\mu_1,\mu_2)\left(2\sin(\frac{u}{2})\right)^{2g-2g'}\\
    =&\sum_{g\geq g'}n_{g,d}^{X_k'|E}(g',\mu_1,\mu_2)(-1)^{g-g'}(q-2+q^{-1})^{g-g'}.
\end{aligned}
\end{equation}
\end{definition}

\begin{remark}
    When $g'=0$, the relative BPS polynomial $n_{g',d,(\mu_1,\mu_2)}^{X_k'|E}(q)$ is equal to the BPS polynomial defined in \cite[Definition 4.3]{ab25}.
\end{remark}

\begin{corollary}\label{cor:rel-corre}
Let $g'\in\zb_{\geq0}$, and $d\in H_2(X_k';\zb)$ be a homology class such that $d\cdot[L]>0$, $d\cdot[E]\geq0$ and $n:=d\cdot[L]-1+g'+l(\mu_2)\geq0$. Suppose that $\vec\mu=(\vec\mu_1,\vec\mu_2)$ is an ordered partition of $d\cdot[E]$. Then,
we have the equality
\begin{equation}
\begin{aligned}
\prod_{i=1}^{l(\mu_1)}\frac{[\mu_1^{(i)}]_q}{\mu_1^{(i)}}\cdot\prod_{i=1}^{l(\mu_2)}\frac{[\mu_2^{(i)}]_q}{\mu_2^{(i)}}\cdot n_{g',d,(\mu_1,\mu_2)}^{X_k'|E}(q)=N_{q}^{\mu_1,\mu_2}(X_k',d,g').
\end{aligned}
\end{equation}
\end{corollary}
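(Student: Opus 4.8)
The plan is to combine Theorem \ref{thm:1} with the two defining relations (Definition \ref{def:rel-bps-inv} and Definition \ref{def:rel-bps-poly}) and then reconcile the resulting powers of $2\sin(\frac{u}{2})$ after the change of variables $q=e^{iu}$. First I would record the elementary identity
\[
2\sin\left(\tfrac{nu}{2}\right)=[n]_q\,(-i)(q^{\frac{1}{2}}-q^{-\frac{1}{2}}),\qquad n\in\zb_{>0},
\]
which follows from $q^{\frac{n}{2}}-q^{-\frac{n}{2}}=[n]_q(q^{\frac{1}{2}}-q^{-\frac{1}{2}})$ together with $2\sin(\tfrac{nu}{2})=(-i)(q^{\frac{n}{2}}-q^{-\frac{n}{2}})$, exactly as used in the proof of Theorem \ref{thm:1}. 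Writing $w:=(-i)(q^{\frac{1}{2}}-q^{-\frac{1}{2}})=2\sin(\tfrac{u}{2})$ for brevity, this gives $2\sin(\tfrac{\mu_1^{(i)}u}{2})=[\mu_1^{(i)}]_q\,w$ and $2\sin(\tfrac{\mu_2^{(i)}u}{2})=[\mu_2^{(i)}]_q\,w$ for each part, and $(2\sin(\tfrac{u}{2}))^{2g-2+d\cdot[L]}=w^{2g-2+d\cdot[L]}$.

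Next I would rewrite the defining relation for the relative BPS invariants in the variable $w$. Substituting the identities above into the right-hand side of Definition \ref{def:rel-bps-inv}, factoring $w^{2g'-2+d\cdot[L]}$ out of the sum, and recognising the remaining series $\sum_{g\geq g'}n_{g,d}^{X_k'|E}(g',\mu_1,\mu_2)\,w^{2g-2g'}$ as precisely $n_{g',d,(\mu_1,\mu_2)}^{X_k'|E}(q)$ by Definition \ref{def:rel-bps-poly}, the left-hand side of Definition \ref{def:rel-bps-inv} equals
\[
\prod_{i=1}^{l(\mu_1)}\frac{[\mu_1^{(i)}]_q}{\mu_1^{(i)}}\cdot\prod_{i=1}^{l(\mu_2)}\frac{[\mu_2^{(i)}]_q}{\mu_2^{(i)}}\cdot w^{\,l(\mu_1)+l(\mu_2)+2g'-2+d\cdot[L]}\cdot n_{g',d,(\mu_1,\mu_2)}^{X_k'|E}(q).
\]
On the other hand, the left-hand side of Definition \ref{def:rel-bps-inv} is literally $u^{d\cdot[L]}$ times the generating series appearing in Theorem \ref{thm:1}, so by that theorem it also equals $N_q^{\mu_1,\mu_2}(X_k',d,g')\,w^{\,n+g'-1+l(\mu_1)}$.

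Finally I would equate the two expressions and check that the powers of $w$ agree. Using $n=d\cdot[L]-1+g'+l(\mu_2)$, the exponent on the Theorem \ref{thm:1} side is $n+g'-1+l(\mu_1)=d\cdot[L]+2g'-2+l(\mu_1)+l(\mu_2)$, which is identical to the exponent $l(\mu_1)+l(\mu_2)+2g'-2+d\cdot[L]$ obtained from the definitions. Cancelling this common power of $w$ (a unit in $\zb[q^{\pm\frac{1}{2}}]$) yields exactly the asserted identity. The argument is entirely bookkeeping; the only point that requires care is this exponent match, and there is no genuine obstacle, since the nontrivial content is already carried by Theorem \ref{thm:1}.
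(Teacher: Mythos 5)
Your proof is correct and follows essentially the same route as the paper: combine Definition \ref{def:rel-bps-inv} and Definition \ref{def:rel-bps-poly} with Theorem \ref{thm:1} under the change of variables $q=e^{iu}$, and check that the exponents of $2\sin(\frac{u}{2})$ on the two sides agree before cancelling. One pedantic remark: $(-i)(q^{\frac{1}{2}}-q^{-\frac{1}{2}})$ is not a unit in $\zb[q^{\pm\frac{1}{2}}]$ (units there are $\pm q^{k/2}$), but the cancellation is still legitimate because you are working in an integral domain of formal power series in $u$.
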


\begin{proof}
From Definition \ref{def:rel-bps-inv} and Definition \ref{def:rel-bps-poly}, under the change of variables $q=e^{iu}$ we have
$$
\begin{aligned}
&\prod_{i=1}^{l(\mu_1)}\frac{[\mu_1^{(i)}]_q}{\mu_1^{(i)}}\cdot\prod_{i=1}^{l(\mu_2)}\frac{[\mu_2^{(i)}]_q}{\mu_2^{(i)}}\cdot n_{g',d,(\mu_1,\mu_2)}^{X_k'|E}(q)\left((-i)(q^{\frac{1}{2}}-q^{-\frac{1}{2}})\right)^{-2+d\cdot[L]+2g'+l(\mu_1)+l(\mu_2)}\\
&=\prod_{i=1}^{l(\mu_1)}\frac{[\mu_1^{(i)}]_q}{\mu_1^{(i)}}\cdot\prod_{i=1}^{l(\mu_2)}\frac{[\mu_2^{(i)}]_q}{\mu_2^{(i)}}\sum_{g\geq g'}n_{g,d}^{X_k'|E}(g',\mu_1,\mu_2)\left((-i)(q^{\frac{1}{2}}-q^{-\frac{1}{2}})\right)^{-2+d\cdot[L]+2g+l(\mu_1)+l(\mu_2)}\\
&=\sum_{g\geq g'}N_{g,d,n}^{X_k'|E}(g-g',\mu_1,\mu_2)u^{2g-2+d\cdot[L]+l(\mu_1)+l(\mu_2)}.
\end{aligned}
$$
From Theorem \ref{thm:1}, we obtain the required equality.
\end{proof}

\section{Caporaso-Harris formula for the refined counts}\label{sec:6}
We first employ the degeneration formula \cite{li2002} to derive a recursive formula 
of $N_{g,d,n}^{X_{k}'|E}(g-g',\mu_1, \mu_2)$ in the sense of Caporaso-Harris \cite{ch1998}. 
Subsequently, we utilize the correspondence theorem (Theorem \ref{thm:1})
to establish a Caporaso-Harris recursive formula for the refined counts of the floor diagrams relative to a conic. Combined with Corollary \ref{cor:rel-corre}, it is a Caporaso-Harris type recursive formula for the higher genus relative BPS polynomials.

\begin{lemma}
\label{lem:7.1}
Let $g'\in\zb_{\geq0}$, and $d\in H_2(X_k';\zb)$ be a homology class such that $d\cdot[L]>0$ and $d\cdot[E]\geq0$. Choose an ordered partition $\vec\mu=(\vec\mu_1,\vec\mu_2)$ of $d\cdot[E]$. Suppose that $n:=d\cdot[L]-1+g'+l(\mu_2)>0$ and $g\geq g'$.
Then we have the equality
\begin{equation}\label{eq:CH-1}
\begin{aligned}
N_{g,d,n}^{X_{k}'|E}&(g-g',\mu_1, \mu_2)=\sum_{w(\mu_2)\neq0} w\cdot N^{X_k'|E}_{g,d,n-1}(g-g',\mu_1\cup(w),\mu_2\setminus(w))\\
&+\sum\frac{1}{\sigma}\binom{d\cdot[L]+g'-2+l(\mu_2)}{d_1\cdot[L]+g_1'-1+l(\mu_2^1),\ldots,d_m\cdot[L]+g_m'-1+l(\mu_2^m)}\\
&\cdot\binom{\mu_1}{\mu_1^1,\ldots,\mu_1^m}\prod_{j=1}^m\binom{\mu_2^j}{\mu_2^j-\xi^j}\prod_{w\in\xi}w\prod_{j=1}^m N_{g_j,d_j,n_j}^{X_k'|E}(g_j-g_j',\mu^j_1,\mu_2^j)N_{g_0,[Z^+]+l[F]}^{\xi,\eta}.
\end{aligned}
\end{equation}
Here, $w(\mu_2)$ is the number of entries in $\mu_2$ equal to $w$, the second sum is taken over all collections of classes $d_1,\ldots,d_m$, collections of integers $g_1,\ldots,g_m$ and $g_1',\ldots,g_m'$, and collections of partitions $\mu^1=(\mu^1_1,\mu^1_2), \ldots, \mu^m=(\mu^m_1,\mu^m_2),\xi^1,\ldots,\xi^m$
that satisfy the following conditions:
$$
\begin{aligned}
    &\cup_{j=1}^m\mu_1^j\subset\mu_1, \\
    &\mu_2\cup(\cup_{j=1}^m\xi^j)=\cup_{j=1}^m\mu_2^j, |\xi^j|\neq0,\\
    &d_1+\cdots+d_m=d-[E],\\
    &g-g'=g_1-g_1'+\cdots+g_m-g_m'+g_0, 0\leq g_i'\leq g_i,\\
    &\sum_{j=1}^m(l(\xi^j)+g_j)-m+g_0=g.
\end{aligned}
$$
Here, $\eta=\mu_1\setminus\cup_{j=1}^m\mu_1^j$, $\xi=\cup_{j=1}^m\xi^j$, and $n_j=d_j\cdot[L]+g_j'-1+l(\mu_2^j)$.
Moreover, $d_i\neq l[E_a]$ with $l\geq2$. If $d_i=[E_a]$, we have $d_j\neq[E_a]$ for any $j\neq i$, where $i,j\in\{1,\ldots,m\}, a\in\{1,\ldots,k\}.$
The symbol $\sigma$ is defined as follows.
We define an equivalence relation $\sim$ in the set $\{1,\ldots,m\}$ as follows: $i\sim j$ if and only if $(g_{i},d_{i},g_{i}',\mu^{i})=(g_{j},d_{j},g_{j}',\mu^{j})$.
The number $\sigma$ denotes the product of factorials of the cardinalities of the equivalent classes. 
The non-zero initial data for formula $(\ref{eq:CH-1})$ are given in Remark $\ref{prop:vanishing-X_k1}$, Proposition $\ref{prop:vanishing-X_k2}$ and \cite[Theorem $4.4$]{bou2021}.
\end{lemma}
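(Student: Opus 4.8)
The plan is to apply the degeneration formula of \cite{li2002} to the degeneration of $X_k'$ to the normal cone of $E$, in the spirit of the derivations of the classical Caporaso--Harris formula from the degeneration formula in \cite{li2004,ip1998}, and to read off the two sums from a case analysis of the resulting dual graphs. Concretely, I would perform a single step of the degeneration of Section~\ref{sec:4.1}, so that the central fibre is $X_k'\cup_E\nl$ with $\nl=\pb(\ol_E\oplus N_{E|X_k'})$ glued to $X_k'$ along $E\simeq Z^+$. Among the $n=d\cdot[L]-1+g'+l(\mu_2)$ point constraints I would place exactly one on the bubble $\nl$, which accounts for the distinguished point insertion $p$ of the $\nl$-invariant $N_{g_0,[Z^+]+l[F]}^{\xi,\eta}$ of \eqref{eq:def-rel}, and spread the remaining $n-1$ over the components mapping to $X_k'$. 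By Proposition~\ref{prop:4.1} this expresses $N_{g,d,n}^{X_k'|E}(g-g',\mu_1,\mu_2)$ as a sum over decorated dual graphs $\Gamma$ weighted by $\frac{\prod_e w_e}{|\aut(\Gamma)|}\prod_v N_{\Gamma,v}$, in which each half-edge $(v,e)$ carries the K\"unneth label $c_{(v,e)}\in\{1,pt\}$ recording how the diagonal of $E\simeq\pb^1$ splits at the corresponding node.

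I would then prune the contributing graphs with the vanishing and evaluation inputs already in place: Remark~\ref{prop:vanishing-X_k1} and Propositions~\ref{prop:vanishing-X_k2}, \ref{prop:excep1} for the $X_k'$-vertices, Proposition~\ref{prop:4.2} for the exceptional classes, Lemma~\ref{lem:4.02} for the chain structure, and \cite[Theorem~4.4, Lemma~5.1, Lemma~5.2]{bou2021} for the $\nl$-vertices. These force the bubble to reduce to a single vertex of class $[Z^+]+l[F]$ and genus $g_0$ carrying $p$, together with fibre components, and they impose $d_i\neq l[E_a]$ for $l\geq2$ as well as the distinctness of the $[E_a]$-pieces. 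Exactly two families survive. In the first, no multiple of $[Z^+]$ splits off: the bubble is a multiple of a fibre through $p$, and pinning $p$ fixes a single free tangency of order $w$ of the $X_k'$-curve. The class $d$ and genus $g$ are unchanged, the tangency passes from $\mu_2$ to $\mu_1$, the number of point constraints drops to $n-1$, and the contact order $\prod_e w_e=w$ supplies the factor $w$; summing over the admissible $w$ gives the first sum. In the second, a bubble of class $[Z^+]+l[F]$ genuinely splits off and the $X_k'$-side breaks into components of classes $d_1,\dots,d_m$ with $\sum_j d_j=d-[E]$, producing the second sum.

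It then remains to identify every coefficient of the second sum. Distributing the $n-1=d\cdot[L]-2+g'+l(\mu_2)$ point constraints over the $m$ components yields the multinomial with entries $n_j=d_j\cdot[L]+g_j'-1+l(\mu_2^j)$, whose bottom slot vanishes because $\sum_j n_j=n-1$. The half-edge labels drive the tangency bookkeeping: distributing the fixed tangencies as $\mu_1=\eta\cup\bigcup_j\mu_1^j$, with $\eta=\mu_1\setminus\bigcup_j\mu_1^j$ attached to the bubble, produces $\binom{\mu_1}{\mu_1^1,\dots,\mu_1^m}$, while selecting on each component the free tangencies in $\mu_2^j$ that glue to the bubble produces $\prod_j\binom{\mu_2^j}{\mu_2^j-\xi^j}$, with $\xi=\bigcup_j\xi^j$ the $Z^+$-profile of the bubble and $\mu_2\cup\xi=\bigcup_j\mu_2^j$; the contact orders at these $\xi$-nodes give $\prod_e w_e=\prod_{w\in\xi}w$. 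Additivity of the Lambda-class degree across vertices, via \cite[Proposition~3.1]{bou2021}, yields $g-g'=\sum_j(g_j-g_j')+g_0$, while the dual graph has $l(\xi)$ edges and $m+1$ vertices and is connected precisely because $|\xi^j|\neq0$ for every $j$, so its first Betti number $l(\xi)-m$ gives $\sum_j(l(\xi^j)+g_j)-m+g_0=g$. Finally the normalization $\frac{1}{|\aut(\Gamma)|}$ collapses to $\frac{1}{\sigma}$ once all labelled point and tangency data are fixed, since the residual symmetries only permute components with identical decorations $(g_j,d_j,g_j',\mu^j)$.

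The step I expect to be the main obstacle is precisely this last accounting: reconciling the weight factors $\prod_e w_e$, the multinomials, and the choices of K\"unneth labels against the single normalization $\frac{1}{|\aut(\Gamma)|}$ and the ordering conventions for the relative marked points, while simultaneously tracking the degree of the Lambda class on each vertex so that the indices $g_j'$ and $g_0$ come out exactly as stated. The base cases of the recursion, namely the invariants with $n=0$ or with no $X_k'$-component of positive $[L]$-degree, are supplied by Remark~\ref{prop:vanishing-X_k1}, Proposition~\ref{prop:vanishing-X_k2} and \cite[Theorem~4.4]{bou2021}.
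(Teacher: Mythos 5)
Your proposal is correct and follows essentially the same route as the paper: a one-step degeneration of $X_k'$ to the normal cone of $E$ with the single point constraint placed on the bubble $\nl$, dimension counting together with the vanishing/evaluation results (Remark \ref{prop:vanishing-X_k1}, Propositions \ref{prop:vanishing-X_k2}, \ref{prop:excep1}, \ref{prop:4.2} and \cite[Lemmas 5.1, 5.2, Proposition 3.1]{bou2021}) to reduce to the two families of dual graphs, and the same bookkeeping of multinomials, edge weights, Lambda-class splitting and $|\aut(\Gamma)|=\sigma$. The only cosmetic slip is invoking Proposition \ref{prop:4.1} (stated for the $n$-fold degeneration) where the paper applies the degeneration formula of \cite{li2002} directly to the one-step degeneration, but this does not affect the argument.
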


\begin{proof}
    As in the proof of Lemma \ref{lem:deg>3}, we degenerate $X_k'$ to the normal cone of the smooth conic $E$, and the central fiber is $X_k'\cup\nl$,
    where $\nl=\pb(\ol_E\oplus N_{E|X_k'})$.
    We apply the degeneration formula \cite{li2002} to this degeneration. In the degeneration procedure, we keep $n-1$ point insertions in $X_k'$ and $1$ point in $\nl$. 
    Suppose that the non-zero terms in the degeneration formula are indexed by graphs $\Gamma$.
    Let $Z^+=\pb(0\oplus N_{E|X_k'})$, and $F$ be a fiber in $\nl$.
    Let $v\in\Gamma$ be a vertex corresponding to a curve in $\nl$ of class $d_v=h_v[Z^+]+l_v[F]$, genus $g_v$ and contact orders $\xi$, $\eta$, where $\xi$ and $\eta$ are two partitions of $[Z^+]\cdot d_v=h_v(k-4)+l_v$ and $[Z^-]\cdot d_v=l_v$ respectively.

    If the curve in $\nl$ corresponding to $v$ does not pass through the absolute marked point in $\nl$, the expected dimension of the moduli space of relative stable maps in $\nl$ corresponding to $v$ is $2h_v+g_v-1+l(\xi)+l(\eta)$.
    The maximal degree of an insersion is $g_v+l(\xi)+l(\eta)$.
    The contribution of $\Gamma$ is $0$ except for the case that 
    $$
    2h_v+g_v-1+l(\xi)+l(\eta)\leq g_v+l(\xi)+l(\eta).
    $$
    Hence, we have $(h_v,l_v)=(0,l_v)$, where $l_v$ is one entry in the partition $\mu=(\mu_1,\mu_2)$.
    By \cite[Lemma $5.1$]{bou2021}, the contribution of $v$ is $\frac{1}{l_v}$ if $g_v=0$; otherwise, the contribution of $v$ is $0$.

    Now we consider the curve in $\nl$ corresponding to $v$ that passes through the absolute marked point in $\nl$.
    The expected dimension of the moduli space of relative stable maps corresponding to the vertex $v$ is $2h_v+g_v+l(\xi)+l(\eta)$.
    The maximum degree of insertion is $g_v+l(\xi)+l(\eta)+2$. The contribution of $\Gamma$ is $0$ except for the case $2h_v+g_v+l(\xi)+l(\eta)\leq g_v+l(\xi)+l(\eta)+2$.
    Hence, $(h_v,l_v)=(0,l_v)$ or $(h_v,l_v)=(1,l_v)$, where $l_v$ is an entry in partition $\mu=(\mu_1,\mu_2)$.
    We first consider the case $(h_v,l_v)=(0,l_v)$.
    By \cite[Lemma 5.2]{bou2021}, the contribution of the vertex $v$ is $1$ if $g_v=0$; otherwise, the contribution of $v$ is $0$.
    Moreover, $l(\xi)=l(\eta)=1$ and the two relative insertions are a trivial class.
    Therefore, $l_v$ is an entry in $\mu_2$.
    Since all classes $d_v$ in $\nl$ are fiber classes and the graph $\Gamma$ is connected, there is only one vertex $v'$ of $\Gamma$ corresponding to curves in $X_k$. The graph $\Gamma$ has to be genus $0$ and $g_{v'}=g$. By \cite[Proposition 3.1]{bou2021} the Lambda class at the vertex $v'$ is $\lambda_{g-g'}$.
    By the degeneration formula \cite{li2002}, this case corresponds to the first summarization on the right-hand side of equation (\ref{eq:CH-1}).

    In the case that $(h_v,l_v)=(1,l_v)$, from \cite[Lemma 5.2]{bou2021}, the contribution of the vertex $v$ is 
    $N_{g_v,d_v}^{\xi,\eta}$ (see equation (\ref{eq:def-rel})),
    if the lambda class at $v$ is $\lambda_{g_v}$; otherwise, the contribution of $v$ is $0$.
    Moreover, all $l(\xi)+l(\eta)$ cohomology classes $\delta^1$ and $\delta^2$ are Poincar\'e dual to a point. Hence, $\eta$ consists of some entries in $\mu_1$. 
    By \cite[Proposition 3.1]{bou2021} the genus $g(\Gamma)$ of $\Gamma$ satisfies $g(\Gamma)\leq g'$.
    Suppose that there are $m$ vertices $v_1',\ldots,v_m'$ of $\Gamma$ corresponding to the curves in $X_k'$. Then the data $d_{i}, g_{i},g_i',\mu^{i}=(\mu_{1}^{i},\mu_{2}^{i}),\lambda_{v_i'}$ are chosen such that the graph $\Gamma$ is connected with $g(\Gamma)\leq g'$ and the Lambda classes are split according to \cite[Proposition 3.1]{bou2021}. Obviously, $\mu_{1}=\eta\cup(\cup_{j=1}^m\mu_{1}^{j})$ and $\xi\cup\mu_2=\cup_{j=1}^m\mu_{2}^{j}$. The classes $\sum_{i=1}^md_{i}=d-[E]$, and $g(\Gamma)=\sum_{j=1}^ml(\xi^j)-m$,
    where $\xi^j$ is the partition consisting of the weights of the edges in $\Gamma$ connecting $v_j'$ to $v$. 
    Note that $\xi=\cup\xi^j$. By \cite[Proposition 3.1]{bou2021}, $g-g'=g_1-g_1'+\cdots+g_m-g_m'+g_0$ and $0\leq g_i'\leq g_i$.
    The degeneration of the moduli spaces implies that $g(\Gamma)+\sum_{j=1}^mg_j+g_0=g$.
    The factor 
    $$
    \binom{d\cdot[L]+g'-2+l(\mu_2)}{d_1\cdot[L]+g_1'-1+l(\mu_2^1),\ldots,d_m\cdot[L]+g_m'-1+l(\mu_2^m)}
    $$
    corresponds to the number of ways to distribute the $n-1$ absolute marked points in the $m$ vertices $v_1',\ldots,v_m'$.
    The factors
    $$
    \binom{\mu_1}{\mu_1^1,\ldots,\mu_1^m} \text{ and }\prod_{j=1}^m\binom{\mu_2^j}{\mu_2^j-\xi^j}
    $$
    correspond to the numbers of ways to choose $\mu_1^1,\ldots,\mu_1^m$ 
    and $\mu_2^1,\ldots,\mu_2^m,\xi^1,\ldots,\xi^m$, respectively.
    The factor $\prod_{m\in\xi}m$ comes from the product of the relative invariants of $\nl$ 
    with no absolute marked points and the weights of the edges in $\Gamma$ connecting 
    vertices corresponding to curves in $\nl$ to the vertices $v_1',\ldots,v_m'$. 
    Note that the factor $\sigma=|\aut(\Gamma)|$.
\end{proof}

\begin{remark} 
When there is no Lambda classes, 
Vakil derived a Caporaso-Harris type recursive formula (see \cite[Theorem 6.8]{v2000}) 
for the higher genus relative Gromov-Witten invariants of $X_k'$.
\end{remark}

\begin{theorem}\label{thm:CH}
Let $g'\in\zb_{\geq0}$, and $d\in H_2(X_k';\zb)$ be a homology class such that $d\cdot[L]>0$ and $d\cdot[E]\geq0$. Choose an ordered partition $\vec\mu=(\vec\mu_1,\vec\mu_2)$ of $d\cdot[E]$. Suppose that $n:=d\cdot[L]-1+g'+l(\mu_2)>0$.
Then we have the equality
\begin{equation}\label{eq:CH-2}
\begin{aligned}
N_{q}^{\mu_1,\mu_2}(X_k',d,g')&=\sum_{w(\mu_2)\neq0} w\cdot N^{\mu_1\cup(w),\mu_2\setminus(w)}_{q}(X_k',d,g')\\
&+\sum\frac{1}{\sigma}\binom{d\cdot[L]+g'-2+l(\mu_2)}{d_1\cdot[L]+g_1'-1+l(\mu_2^1),\ldots,d_m\cdot[L]+g_m'-1+l(\mu_2^m)}\\
&\cdot\binom{\mu_1}{\mu_1^1,\ldots,\mu_1^m}\prod_{j=1}^m\binom{\mu_2^j}{\mu_2^j-\xi^j}\prod_{j=1}^m N_{q}^{\mu_1^j,\mu_2^j}(X_k',d_j,g_j)\prod_{x\in\xi}[x]_q\prod_{y\in\eta}\frac{[y]_q}{y}.
\end{aligned}
\end{equation}
Here, the second sum is taken over all collections of classes $d_1,\ldots,d_m$, collections of integers $g_1,\ldots,g_m$ and $g_1',\ldots,g_m'$, and collections of partitions $\mu^1=(\mu^1_1,\mu^1_2), \ldots, \mu^m=(\mu^m_1,\mu^m_2),\xi^1,\ldots,\xi^m$ as in Lemma $\ref{lem:7.1}$. Moreover, $\eta=\mu_1\setminus\cup_{j=1}^m\mu_1^j$, $\xi=\cup_{j=1}^m\xi^j$, $n_j=d_j\cdot[L]+g_j'-1+l(\mu_2^j)$, and $\sigma$
is defined as in Lemma $\ref{lem:7.1}$.
The non-zero initial data for formula $(\ref{eq:CH-2})$ are given in the following:
$$
\begin{aligned}
&N_q^{(2),\emptyset}(X_k',[L],0)=\frac{[2]_q}{2},~N_q^{(1,1),\emptyset}(X_k',[L],0)=1,\\
&N_q^{\emptyset,(1)}(X_k',[E_i],0)=1,~N_q^{(1),\emptyset}(X_k',[L]-[E_i],0)=1.
\end{aligned}
$$
\end{theorem}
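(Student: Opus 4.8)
The plan is to derive the refined recursion (\ref{eq:CH-2}) directly from the Gromov--Witten recursion (\ref{eq:CH-1}) of Lemma \ref{lem:7.1} by forming generating series in $u$ and applying the correspondence Theorem \ref{thm:1}, rather than re-deriving anything geometrically. Writing $s:=(-i)(q^{\frac{1}{2}}-q^{-\frac{1}{2}})=2\sin(\frac{u}{2})$, I would multiply both sides of (\ref{eq:CH-1}) by $u^{2g-2+l(\mu_1)+l(\mu_2)}$ and sum over $g\geq g'$. By Theorem \ref{thm:1} the left-hand side becomes $u^{-d\cdot[L]}N_q^{\mu_1,\mu_2}(X_k',d,g')\,s^{\,d\cdot[L]-2+2g'+l(\mu_1)+l(\mu_2)}$, which is the common prefactor I will ultimately divide out. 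For the first sum on the right of (\ref{eq:CH-1}), since $l(\mu_1\cup(w))+l(\mu_2\setminus(w))=l(\mu_1)+l(\mu_2)$, each term is already in the form required by Theorem \ref{thm:1} (with $n$ replaced by $n-1$); applying it produces the same prefactor times $\sum_{w(\mu_2)\neq0}w\cdot N_q^{\mu_1\cup(w),\mu_2\setminus(w)}(X_k',d,g')$, reproducing the first sum of (\ref{eq:CH-2}).

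The substance is in the second sum. The first step is an exponent-splitting identity: using $\xi=\cup_j\xi^j$, $\mu_2\cup\xi=\cup_j\mu_2^j$, $\eta=\mu_1\setminus\cup_j\mu_1^j$ and the genus relation $g=l(\xi)-m+\sum_j g_j+g_0$, one checks
\[
2g-2+l(\mu_1)+l(\mu_2)=\sum_{j=1}^m\bigl(2g_j-2+l(\mu_1^j)+l(\mu_2^j)\bigr)+\bigl(2g_0-2+l(\xi)+l(\eta)\bigr),
\]
so the series factorizes over the $m$ vertices in $X_k'$ and the single $\nl$-vertex. To each factor $N_{g_j,d_j,n_j}^{X_k'|E}(g_j-g_j',\mu_1^j,\mu_2^j)$ I apply Theorem \ref{thm:1} (summing $g_j\geq g_j'$), obtaining $u^{-d_j\cdot[L]}N_q^{\mu_1^j,\mu_2^j}(X_k',d_j,g_j')\,s^{\,d_j\cdot[L]-2+2g_j'+l(\mu_1^j)+l(\mu_2^j)}$; for the $\nl$-vertex I use the generating series underlying (\ref{eq:4.4}), namely \cite[Theorem 4.4]{bou2021}, which gives
\[
\sum_{g_0\geq0}N_{g_0,[Z^+]+l[F]}^{\xi,\eta}\,u^{2g_0-2+l(\xi)+l(\eta)}=u^{-2}\,s^{\,l(\xi)+l(\eta)}\prod_{x\in\xi}\frac{[x]_q}{x}\prod_{y\in\eta}\frac{[y]_q}{y}.
\]

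It then remains to collect exponents. For $u$: since $\sum_j d_j=d-[E]$ and $[E]\cdot[L]=2$, the $m$ vertex factors contribute $u^{-(d\cdot[L]-2)}$, and the extra $u^{-2}$ carried by the $\nl$-vertex restores the total $u^{-d\cdot[L]}$ of the left-hand side. For $s$: subtracting the Lambda-degree constraint $g-g'=\sum_j(g_j-g_j')+g_0$ from the genus relation yields $g'=l(\xi)-m+\sum_j g_j'$, with which the total $s$-exponent collapses to $d\cdot[L]-2+2g'+l(\mu_1)+l(\mu_2)$, again matching the left-hand side. Cancelling the common prefactor and using $\prod_{w\in\xi}w\cdot\prod_{x\in\xi}\frac{[x]_q}{x}=\prod_{x\in\xi}[x]_q$ turns the second sum into exactly that of (\ref{eq:CH-2}), the floor genera $g_j'$ of Lemma \ref{lem:7.1} being the genera that label the refined counts. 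Finally I obtain the initial data by feeding the series of Remark \ref{prop:vanishing-X_k1} and Proposition \ref{prop:vanishing-X_k2} into Theorem \ref{thm:1}; for instance $\sum_g N_{g,[L],0}^{X_k'|E}((2),\emptyset)u^{2g-1}=u^{-1}\frac{[2]_q}{2}$ forces $N_q^{(2),\emptyset}(X_k',[L],0)=\frac{[2]_q}{2}$, and likewise for the remaining three. The main obstacle is precisely this exponent bookkeeping--especially spotting the $u^{-2}$ attached to each $\nl$-vertex and verifying the genus translation $g'=l(\xi)-m+\sum_j g_j'$--since once these are in place the remainder is a direct substitution.
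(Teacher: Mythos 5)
Your proposal is correct and follows essentially the same route as the paper's proof: multiply the recursion of Lemma \ref{lem:7.1} by $u^{2g-2+l(\mu_1)+l(\mu_2)}$, sum over $g\geq g'$, use the exponent-splitting identity coming from the genus relation to factor the series over the $m$ surface vertices and the single $\nl$-vertex, then apply Theorem \ref{thm:1} and \cite[Theorem 4.4]{bou2021} and cancel the common prefactor $u^{-d\cdot[L]}\bigl((-i)(q^{\frac{1}{2}}-q^{-\frac{1}{2}})\bigr)^{n+g'-1+l(\mu_1)}$, with exactly the same $u^{-2}$ and $g'=l(\xi)-m+\sum_j g_j'$ bookkeeping the paper records. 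Your labeling of the refined counts by the genera $g_j'$ also agrees with the paper's displayed identity (7.22), and the initial data are obtained the same way, from Remark \ref{prop:vanishing-X_k1} and Proposition \ref{prop:vanishing-X_k2} via Theorem \ref{thm:1}.
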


\begin{remark}
    There is no $E_i$-marked floor diagrams relative to a conic according to Definition \ref{def:2.2}, so the notation $N_q^{\emptyset,(1)}([E_i],0)$ in Theorem \ref{thm:CH} means the Laurent polynomial such that the generating series $\sum_{g\geq0}N_{[E_i],g,0}^{X_k'|E}(\emptyset,(1)u^{2g-1}$ satisfies Theorem \ref{thm:1}, after changing the variable $q=e^{iu}$.
\end{remark}

\begin{proof}
In this proof we use the notation
$$
\begin{aligned}
\Delta=&\frac{1}{\sigma}\binom{d\cdot[L]+g'-2+l(\mu_2)}{d_1\cdot[L]+g_1'-1+l(\mu_2^1),\ldots,d_m\cdot[L]+g_m'-1+l(\mu_2^m)}\\
&\cdot\binom{\mu_1}{\mu_1^1,\ldots,\mu_1^m}\prod_{j=1}^m\binom{\mu_2^j}{\mu_2^j-\xi^j}\prod_{w\in\xi}w.
\end{aligned}
$$
From Lemma \ref{lem:7.1}, one obtains
\begin{equation}\label{eq:7.2}
\begin{aligned}
&\sum_{g\geq g'}N_{g,d,n}^{X_k'/E}(g-g',\mu_1,\mu_2)u^{2g-2+l(\mu_1)+l(\mu_2)}\\
=&\sum_{w(\mu_2)\neq0}w\cdot\sum_{g\geq g'}N_{g,d,n-1}^{X_k'/E}(g-g',\mu_1\cup(w),\mu_2\setminus(w))u^{2g-2+l(\mu_1)+l(\mu_2)}\\
&+\sum_{g\geq g'}\left(\sum\Delta\prod_{j=1}^m N_{g_j,d_j,n_j}^{X_k'/E}(g_j-g_j',\mu^j_1,\mu_2^j)N_{g_0,d_0}^{\xi,\eta}\right)u^{2g-2+l(\mu_1)+l(\mu_2)},
\end{aligned}
\end{equation}
where $d_0=[Z^+]+l[F]$. The second sum in the right-hand side of equality (\ref{eq:7.2}) can be reformulated as
$$
\begin{aligned}
&\sum_{g\geq g'}\sum\Delta\prod_{j=1}^m\left(N_{g_j,d_j,n_j}^{X_k'/E}(g_j-g_j',\mu^j_1,\mu_2^j)u^{2g_j-2+l(\mu_1^j)+l(\mu_2^j)}\right)\left(N_{g_0,d_0}^{\xi,\eta}u^{2g_0-2+l(\xi)+l(\eta)}\right)\\
=&\sum\Delta\prod_{j=1}^m(\sum_{g_j\geq g_j'}N_{g_j,d_j,n_j}^{X_k'/E}(g_j-g_j',\mu^j_1,\mu_2^j)u^{2g_j-2+l(\mu_1^j)+l(\mu_2^j)})(\sum_{g_0\geq0}N_{g_0,d_0}^{\xi,\eta}u^{2g_0-2+l(\xi)+l(\eta)}).
\end{aligned}
$$
By Theorem \ref{thm:1} and \cite[Theorem 4.4]{bou2021}, we have
\begin{equation}\label{eq:7.22}
\begin{aligned}
&u^{-d\cdot[L]}N_q^{\mu_1,\mu_2}(X_k',d,g')\left((-i)(q^\frac{1}{2}-q^{1\frac{1}{2}})\right)^{n+g'-1+l(\mu_1)}\\
=&u^{-d\cdot[L]}\sum_{w(\mu_2)\neq0}w\cdot N_q^{\mu_1\cup(w),\mu_2\setminus(w)}(X_k',d,g')\left((-i)(q^\frac{1}{2}-q^{1\frac{1}{2}})\right)^{n+g'-1+l(\mu_1)}\\
&+\sum\Delta\left(u^{-d\cdot[L]+2}\prod_{j=1}^mN_q^{\mu_1^j,\mu_2^j}(X_k',d_j,g_j')\left((-i)(q^\frac{1}{2}-q^{-\frac{1}{2}})\right)^{n_j+g_j'-1+l(\mu_1^j)}\right)\\
&\cdot\left(u^{-2}\prod_{x\in\xi}\frac{[x]_q}{x}\prod_{y\in\eta}\frac{[y]_q}{y}\left((-i)(q^\frac{1}{2}-q^{-\frac{1}{2}})\right)^{l(\xi)+l(\eta)}\right).
\end{aligned}
\end{equation}
Note that
$$
\begin{aligned}
\sum_{j=1}^m(n_j+g_j'-1+l(\mu_1^j))+l(\xi)+l(\eta)&=\sum_{j=1}^mn_j+(\sum_{j=1}^mg_j'-m+l(\xi))+(\sum_{j=1}^ml(\mu_1^j)+l(\eta))\\
&=n-1+g'+l(\mu_1).
\end{aligned}
$$
From equation (\ref{eq:7.22}), we obtain the equality (\ref{eq:CH-2}).
\end{proof}

\section*{Acknowledgements}
The first author thanks Erwan Brugall\'e for valuable communications and suggestions during the early stage of this project. We thank H\"ulya Arg\"uz and Pierrick Bousseau for their explanation of equation (3.12) in \cite[Section 3.3]{ab25} and valuable comments by email. J. Hu is partially supported by the National Key R \& D Program of China (No.2023YFA1009801) and National Science Foundation of China (No. 1253103). Y. Ding was supported by National Science Foundation of China (No.12101565).

\appendix

\section{Some computations}\label{sec:appendix}
In this Appendix, we compute some refined counts of floor diagrams relative to a conic using two methods:
direct computation from the definition and the recursive formula in Theorem \ref{thm:CH}.

\subsection{Computation via floor diagrams}

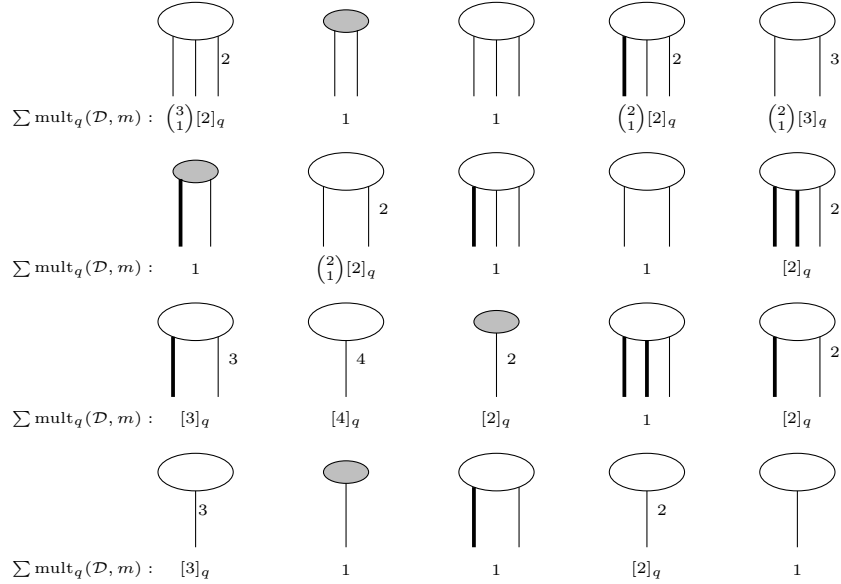
\begin{figure}[ht]
    \centering
    \begin{tikzpicture}
    \foreach \Point in {(0,0),(4,0),(6,0),(8,0)}
    \draw \Point ellipse (0.5 and 0.25);
    \foreach \Point in {(2,0)}
    \draw[fill=lightgray] \Point ellipse (0.3 and 0.15);
    \draw (-0.3,-0.2)--(-0.3,-1);
    \draw (0,-0.25)--(0,-1);
    \draw (0.3,-0.2)--(0.3,-1);
    \draw (1.85,-0.12)--(1.85,-1);
    \draw (2.15,-0.12)--(2.15,-1);
    \draw (3.7,-0.2)--(3.7,-1);
    \draw (4,-0.25)--(4,-1);
    \draw (4.3,-0.2)--(4.3,-1);
    \draw[line width=0.5mm] (5.7,-0.2)--(5.7,-1);
    \draw (6,-0.25)--(6,-1);
    \draw (6.3,-0.2)--(6.3,-1);
    \draw (8.3,-0.2)--(8.3,-1);
    \draw (7.7,-0.2)--(7.7,-1);
    \draw (0.4,-0.5) node{\tiny$2$} (6.4,-0.5) node{\tiny$2$}(8.5,-0.5) node{\tiny$3$};
    \draw (-1.5,-1.3) node{\tiny$\sum\mult_q(\dl,m):$} (0,-1.3) node{\tiny$\binom{3}{1}[2]_q$}  (2,-1.3) node{\tiny$1$}
     (4,-1.3) node{\tiny$1$} (6,-1.3) node{\tiny$\binom{2}{1}[2]_q$} (8,-1.3) node{\tiny$\binom{2}{1}[3]_q$};
    \foreach \Point in {(0,-2)}
    \draw[fill=lightgray] \Point ellipse (0.3 and 0.15);
    \foreach \Point in {(2,-2),(4,-2),(6,-2),(8,-2)}
    \draw \Point ellipse (0.5 and 0.25);
    \draw[line width=0.5mm] (-0.2,-2.1)--(-0.2,-3);
    \draw (0.2,-2.1)--(0.2,-3);
    \draw (1.7,-2.2)--(1.7,-3);
    \draw (2.3,-2.2)--(2.3,-3);
    \draw[line width=0.5mm] (3.7,-2.2)--(3.7,-3);
    \draw (4,-2.25)--(4,-3);
    \draw (4.3,-2.2)--(4.3,-3);
    \draw (5.7,-2.2)--(5.7,-3);
    \draw (6.3,-2.2)--(6.3,-3);
    \draw[line width=0.5mm] (7.7,-2.2)--(7.7,-3);
    \draw[line width=0.5mm] (8,-2.25)--(8,-3);
    \draw (8.3,-2.2)--(8.3,-3);
    \draw (2.5,-2.5) node{\tiny$2$} (8.5,-2.5) node{\tiny$2$};    
    \draw (-1.5,-3.3) node{\tiny$\sum\mult_q(\dl,m):$} (0,-3.3) node{\tiny$1$}  (2,-3.3) node{\tiny$\binom{2}{1}[2]_q$}
     (4,-3.3) node{\tiny$1$} (6,-3.3) node{\tiny$1$} (8,-3.3) node{\tiny$[2]_q$};
    \foreach \Point in {(0,-4),(2,-4),(6,-4),(8,-4)}
    \draw \Point ellipse (0.5 and 0.25);
    \foreach \Point in {(4,-4)}
    \draw[fill=lightgray] \Point ellipse (0.3 and 0.15);
    \draw[line width=0.5mm] (-0.3,-4.2)--(-0.3,-5);
    \draw (0.3,-4.2)--(0.3,-5);
    \draw (2,-4.25)--(2,-5);
    \draw (4,-4.15)--(4,-5);
    \draw[line width=0.5mm] (5.7,-4.2)--(5.7,-5);
    \draw[line width=0.5mm] (6,-4.25)--(6,-5);
    \draw (6.3,-4.2)--(6.3,-5);
    \draw[line width=0.5mm] (7.7,-4.2)--(7.7,-5);
    \draw (8.3,-4.2)--(8.3,-5);
    \draw (0.5,-4.5) node{\tiny$3$} (2.2,-4.5) node{\tiny$4$} (4.2,-4.5) node{\tiny$2$} (8.5,-4.4) node{\tiny$2$}; 
    \draw (-1.5,-5.3) node{\tiny$\sum\mult_q(\dl,m):$} (0,-5.3) node{\tiny$[3]_q$}  (2,-5.3) node{\tiny$[4]_q$}
     (4,-5.3) node{\tiny$[2]_q$} (6,-5.3) node{\tiny$1$} (8,-5.3) node{\tiny$[2]_q$};
    \foreach \Point in {(0,-6),(4,-6),(6,-6),(8,-6)}
    \draw \Point ellipse (0.5 and 0.25);
    \foreach \Point in {(2,-6)}
    \draw[fill=lightgray] \Point ellipse (0.3 and 0.15);
    \draw (0,-6.25)--(0,-7);
    \draw (2,-6.15)--(2,-7);
    \draw[line width=0.5mm] (3.7,-6.2)--(3.7,-7);
    \draw (4.3,-6.2)--(4.3,-7);
    \draw (6,-6.25)--(6,-7);
    \draw (8,-6.25)--(8,-7);
    \draw (0.1,-6.5) node{\tiny$3$} (6.2,-6.5) node{\tiny$2$}; 
    \draw (-1.5,-7.3) node{\tiny$\sum\mult_q(\dl,m):$} (0,-7.3) node{\tiny$[3]_q$}  (2,-7.3) node{\tiny$1$}
    (4,-7.3) node{\tiny$1$} (6,-7.3) node{\tiny$[2]_q$} (8,-7.3) node{\tiny$1$};
    \end{tikzpicture}
\caption{Floor diagrams in Example \ref{epl:7.1}.}
\label{fig:marked}
\end{figure}

We use Brugall\'e's convention \cite{bru2015} to depict floor diagrams: white ellipses denote floors of degree 2,
and grey ellipses denote floors of degree 1; vertices in $\vt^\infty(\dl)$ and edges in $m(\cup_{i=1}^kA_i)$ are not depicted; 
the edges of $\dl$ are represented by vertical lines
that are oriented from down to up; bold vertical lines denote the edges in $m(A_0)\cap\eg^\infty(\dl)$. 
We only mark the weights of the edges that are at least 2.

\begin{example}\label{epl:7.1}
From the floor diagrams depicted in Figure \ref{fig:marked}, we compute the values of some refined counts in the following.
The values of refined counts are listed according to the order of the floor diagrams in Figure \ref{fig:marked}.
\begin{align*}
&N_q^{\emptyset,(1,1,2)}(X_6',2[L],0)=3[2]_q, N_q^{\emptyset,(1,1)}(X_6',[L],0)=1, N_q^{\emptyset,(1,1,1)}(X_6',2[L]-[E_i],0)=1,\\
&N_q^{(1),(1,2)}(X_6',2[L],0)=2[2]_q, N_q^{\emptyset,(1,3)}(X_6',2[L],0)=2[3]_q, N_q^{(1),(1)}(X_6',[L],0)=1,\\
&N_q^{\emptyset,(1,2)}(X_6',2[L]-[E_i],0)=2[2]_q, N_q^{(1),(1,1)}(X_6',2[L]-[E_i],0)=1, \\
& N_q^{\emptyset,(1,1)}(X_6',2[L]-[E_i]-[E_j],0)=1, \text{ where } i,j\in\{1,\ldots,6\}, i\neq j.\\
& N_q^{(1,1),(2)}(X_6',2[L],0)=[2]_q, N_q^{(1),(3)}(X_6',2[L],0)=[3]_q, N_q^{\emptyset,(4)}(X_6',2[L],0)=[4]_q,\\
& N_q^{\emptyset,(2)}(X_6',[L],0)=[2]_q, N_q^{(1,1),(1)}(X_6',2[L]-[E_i],0)=1, N_q^{(1),(2)}(X_6',2[L]-[E_i],0)=[2]_q,\\
& N_q^{\emptyset,(3)}(X_6',2[L]-[E_i],0)=[3]_q, N_q^{\emptyset,(1)}(X_6',[L]-[E_i],0)=1,\\
& N_q^{(1),(1)}(X_6',2[L]-[E_i]-[E_j],0)=1, N_q^{\emptyset,(2)}(X_6',2[L]-[E_i]-[E_j],0)=[2]_q, i\neq j,\\
& N_q^{\emptyset,(1)}(X_6',2[L]-[E_i]-[E_j]-[E_k],0)=1, \text{ where } i,j, k\in\{1,\ldots,6\}, i\neq j, i\neq k, j\neq k.
\end{align*}
\end{example}

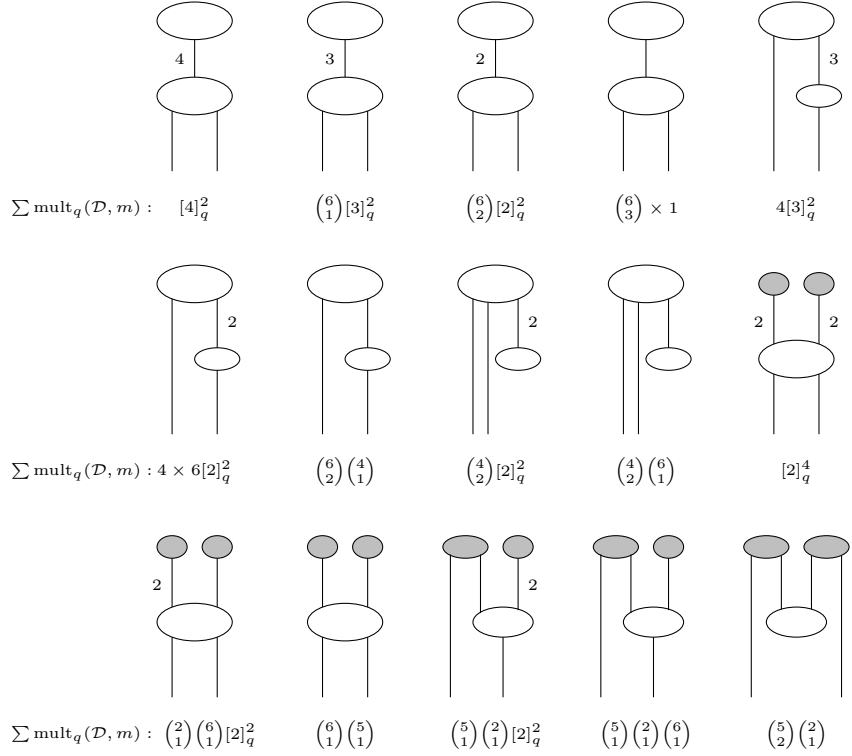
\begin{figure}[ht]
    \centering
    \begin{tikzpicture}
    \foreach \Point in {(0,0),(2,0),(4,0),(6,0),(8,0),(0,-1),(2,-1),(4,-1),(6,-1)}
    \draw \Point ellipse (0.5 and 0.25);
    \draw (0,-0.25)--(0,-0.75);
    \draw (2,-0.25)--(2,-0.75);
    \draw (4,-0.25)--(4,-0.75);
    \draw (6,-0.25)--(6,-0.75);
    \draw (-0.3,-1.2)--(-0.3,-2);
    \draw (0.3,-1.2)--(0.3,-2);
    \draw (1.7,-1.2)--(1.7,-2);
    \draw (2.3,-1.2)--(2.3,-2);
    \draw (3.7,-1.2)--(3.7,-2);
    \draw (4.3,-1.2)--(4.3,-2);
    \draw (5.7,-1.2)--(5.7,-2);
    \draw (6.3,-1.2)--(6.3,-2);    
    \draw (7.7,-0.2)--(7.7,-2);
    \draw (8.3,-0.2)--(8.3,-0.85);
    \draw (8.3,-1) ellipse (0.3 and 0.15);
    \draw (8.3,-1.15)--(8.3,-2);
    \draw (-0.2,-0.5) node{\tiny$4$} (1.8,-0.5) node{\tiny$3$} (3.8,-0.5) node{\tiny$2$} (8.5,-0.5) node{\tiny$3$};    
    \draw (-1.5,-2.5) node{\tiny$\sum\mult_q(\dl,m):$} (0,-2.5) node{\tiny$[4]_q^2$}  (2,-2.5) node{\tiny$\binom{6}{1}[3]_q^2$}
     (4,-2.5) node{\tiny$\binom{6}{2}[2]_q^2$} (6,-2.5) node{\tiny$\binom{6}{3}\times1$} (8,-2.5) node{\tiny$4[3]_q^2$};
     \foreach \Point in {(0,-3.5),(2,-3.5),(4,-3.5),(6,-3.5)}
    \draw \Point ellipse (0.5 and 0.25);
    \draw (-0.3,-3.7)--(-0.3,-5.5);
    \draw (0.3,-3.7)--(0.3,-4.35);
    \draw (0.3,-4.5) ellipse (0.3 and 0.15);
    \draw (0.3,-4.65)--(0.3,-5.5);
    \draw (1.7,-3.7)--(1.7,-5.5);
    \draw (2.3,-3.7)--(2.3,-4.35);
    \draw (2.3,-4.5) ellipse (0.3 and 0.15);
    \draw (2.3,-4.65)--(2.3,-5.5);
    \draw (3.7,-3.7)--(3.7,-5.5);
    \draw (3.9,-3.75)--(3.9,-5.5);
    \draw (4.3,-3.7)--(4.3,-4.35);
    \draw (4.3,-4.5) ellipse (0.3 and 0.15);
    \draw (5.7,-3.7)--(5.7,-5.5);
    \draw (5.9,-3.75)--(5.9,-5.5);
    \draw (6.3,-3.7)--(6.3,-4.35);
    \draw (6.3,-4.5) ellipse (0.3 and 0.15);
    \foreach \Point in {(7.7,-3.5),(8.3,-3.5)}
    \draw[fill=lightgray] \Point ellipse (0.2 and 0.15);
    \draw (8.3,-3.65)--(8.3,-4.3);
    \draw (7.7,-3.65)--(7.7,-4.3);
    \draw (8,-4.5) ellipse (0.5 and 0.25);    
    \draw (7.7,-4.7)--(7.7,-5.5);
    \draw (8.3,-4.7)--(8.3,-5.5);
    \draw (0.5,-4) node{\tiny$2$} (4.5,-4) node{\tiny$2$} (7.5,-4) node{\tiny$2$} (8.5,-4) node{\tiny$2$};    
    \draw (-1.5,-6) node{\tiny$\sum\mult_q(\dl,m):$} (0,-6) node{\tiny$4\times6[2]_q^2$}  (2,-6) node{\tiny$\binom{6}{2}\binom{4}{1}$}
     (4,-6) node{\tiny$\binom{4}{2}[2]_q^2$} (6,-6) node{\tiny$\binom{4}{2}\binom{6}{1}$} (8,-6) node{\tiny$[2]_q^4$};
    \foreach \Point in {(-0.3,-7),(0.3,-7),(1.7,-7),(2.3,-7),(4.3,-7),(6.3,-7)}
    \draw[fill=lightgray] \Point ellipse (0.2 and 0.15);
    \foreach \Point in {(3.6,-7),(5.6,-7),(7.6,-7),(8.4,-7)}
    \draw[fill=lightgray] \Point ellipse (0.3 and 0.15);
    \foreach \Point in {(0,-8),(2,-8)}
    \draw[] \Point ellipse (0.5 and 0.25);
    \foreach \Point in {(4.1,-8),(6.1,-8)}
    \draw[] \Point ellipse (0.4 and 0.2);
    \draw (-0.3,-7.15)--(-0.3,-7.8);
    \draw (0.3,-7.15)--(0.3,-7.8);
    \draw (-0.3,-8.2)--(-0.3,-9);
    \draw (0.3,-8.2)--(0.3,-9);
    \draw (1.7,-7.15)--(1.7,-7.8);
    \draw (2.3,-7.15)--(2.3,-7.8);
    \draw (1.7,-8.2)--(1.7,-9);
    \draw (2.3,-8.2)--(2.3,-9);
    \draw (3.8,-7.1)--(3.8,-7.88);
    \draw (3.4,-7.1)--(3.4,-9);
    \draw (4.3,-7.15)--(4.3,-7.84);
    \draw (4.1,-8.2)--(4.1,-9);
    \draw (5.8,-7.1)--(5.8,-7.88);
    \draw (5.4,-7.1)--(5.4,-9);
    \draw (6.3,-7.15)--(6.3,-7.84);
    \draw (6.1,-8.2)--(6.1,-9);
    \draw (7.4,-7.12)--(7.4,-9);
    \draw (7.8,-7.12)--(7.8,-7.83);
    \draw (8.6,-7.12)--(8.6,-9);
    \draw (8.2,-7.12)--(8.2,-7.83);
    \draw (8,-8) ellipse (0.4 and 0.2);    
    \draw (-0.5,-7.5) node{\tiny$2$} (4.5,-7.5) node{\tiny$2$};    
    \draw (-1.5,-9.5) node{\tiny$\sum\mult_q(\dl,m):$} (0.2,-9.5) node{\tiny$\binom{2}{1}\binom{6}{1}[2]_q^2$}  (2,-9.5) node{\tiny$\binom{6}{1}\binom{5}{1}$}
     (4,-9.5) node{\tiny$\binom{5}{1}\binom{2}{1}[2]_q^2$} (6,-9.5) node{\tiny$\binom{5}{1}\binom{2}{1}\binom{6}{1}$} (8,-9.5) node{\tiny$\binom{5}{2}\binom{2}{1}$};
    \end{tikzpicture}
\caption{$4[L]-\sum_{i=1}^6[E_i]$-marked floor diagrams of genus 0 and type $(\emptyset,(1,1))$ from \cite[Fig. 3]{bru2015}.}
\label{fig:4L-E_i-marked}
\end{figure}

\begin{example}\label{epl:7.2}
All $4[L]-\sum_{i=1}^6[E_i]$-marked floor diagrams of genus 0 and type $(\emptyset,(1,1))$ were listed in \cite[Fig. 3]{bru2015}.
From \cite[Fig. 3]{bru2015}, we obtain the value of the refined counts $N_q^{\emptyset,(1,1)}(X_6',4[L]-\sum_{i=1}^6[E_i],0)$.
For the readers' convenience,  $4[L]-\sum_{i=1}^6[E_i]$-marked floor diagrams of genus 0 and type $(\emptyset,(1,1))$ in \cite[Fig. 3]{bru2015} are also depicted in Figure \ref{fig:4L-E_i-marked}.
$$
N_q^{\emptyset,(1,1)}(X_6',4[L]-\sum_{i=1}^6[E_i],0)=[2]_q^4+[4]_q^2+10[3]_q^2+67[2]_q^2+226.
$$
\end{example}

\subsection{Computation via the recursive formula}
We use the Caporaso-Harris recursive formula in Theorem \ref{thm:CH} to compute the refined counts listed in Example \ref{epl:7.1} and Example \ref{epl:7.2}.

\begin{align*}
N_q^{\emptyset,(1,1,2)}(X_6',2[L],0)=&2N_q^{(2),(1,1)}(X_6',2[L],0)+N_q^{(1),(1,2)}(X_6',2[L],0)\\
=&2N_q^{(1,2),(1)}(X_6',2[L],0)+N_q^{(1,1),(2)}(X_6',2[L],0)\\
&+2N_q^{(1,2),(1)}(X_6',2[L],0)\\
=&6N_q^{(1,1,2),\emptyset}(X_6',2[L],0)\\
=&6\prod_{i=1}^6N_q^{\emptyset,(1)}(X_6',[E_i],0)\frac{[2]_q}{2}
=3[2]_q.\\
N_q^{\emptyset,(1,1)}(X_6',[L],0)=&N_q^{(1),(1)}(X_6',[L],0)=N_q^{(1,1),\emptyset}(X_6',[L],0)=1.\\
N_q^{\emptyset,(1,1,1)}(X_6',2[L]-[E_i],0)=&N_q^{(1),(1,1)}(X_6',2[L]-[E_i],0)\\
=&N_q^{(1,1),(1)}(X_6',2[L]-[E_i],0)\\
=&N_q^{(1,1,1),\emptyset}(X_6',2[L]-[E_i],0)\\
=&\prod_{j\neq i}N_q^{\emptyset,(1)}(X_6',[E_j],0)
=1.\\
N_q^{\emptyset,(1,3)}(X_6',2[L],0)=&N_q^{(1),(3)}(X_6',2[L],0)+3N_q^{(3),(1)}(X_6',2[L],0)\\
=&6N_q^{(1,3),\emptyset}(X_6',2[L],0)\\
=&6\prod_{j=1}^6N_q^{\emptyset,(1)}(X_6',[E_j],0)\frac{[3]_q}{3}
=2[3]_q.\\
N_q^{\emptyset,(1,2)}(X_6',2[L]-[E_i],0)=&N_q^{(1),(2)}(X_6',2[L]-[E_i],0)+2N_q^{(2),(1)}(X_6',2[L]-[E_i],0)\\
=&4N_q^{(1,2),\emptyset}(X_6',2[L]-[E_i],0)\\
=&4\prod_{j\neq i}N_q^{\emptyset,(1)}(X_6',[E_j],0)\frac{[2]_q}{2}
=2[2]_q.\\
N_q^{\emptyset,(1,1)}(X_6',2[L]-[E_i]-[E_j],0)=&N_q^{(1),(1)}(X_6',2[L]-[E_i]-[E_j],0)\\
=&N_q^{(1,1),\emptyset}(X_6',2[L]-[E_i]-[E_j],0)\\
=&\prod_{l\neq i,j}N_q^{\emptyset,(1)}(X_6',[E_l],0)
=1.\\
N_q^{\emptyset,(4)}(X_6',2[L],0)=&4N_q^{(4),\emptyset}(X_6',2[L],0)
=4\prod_{i=1}^6N_q^{\emptyset,(1)}(X_6',[E_i],0)\frac{[4]_q}{4}=[4]_q.\\
N_q^{\emptyset,(2)}(X_6',[L],0)=&2N_q^{(2),\emptyset}(X_6',[L],0)=[2]_q.\\
N_q^{(1),(2)}(X_6',2[L]-[E_i],0)=&2N_q^{(1,2),\emptyset}(X_6',2[L]-[E_i],0)\\
=&2\prod_{j\neq i}N_q^{\emptyset,(1)}(X_6',[E_j],0)\frac{[2]_q}{2}=[2]_q.\\
N_q^{\emptyset,(3)}(X_6',2[L]-[E_i],0)=&3N_q^{(3),\emptyset}(X_6',2[L]-[E_i],0)\\
=&3\prod_{j\neq i}N_q^{\emptyset,(1)}(X_6',[E_j],0)\frac{[3]_q}{3}=[3]_q.\\
N_q^{\emptyset,(2)}(X_6',2[L]-[E_i]-[E_j],0)=&2N_q^{(2),\emptyset}(X_6',2[L]-[E_i]-[E_j],0)\\
=&2\prod_{l\neq i,j}N_q^{\emptyset,(1)}(X_6',[E_l],0)\frac{[2]_q}{2}=[2]_q.\\
N_q^{\emptyset,(1)}(X_6',2[L]-[E_i]-[E_j]-[E_k],0)=&N_q^{(1),\emptyset}(X_6',2[L]-[E_i]-[E_j]-[E_k],0)\\
=&\prod_{l\neq i,j,k}N_q^{\emptyset,(1)}(X_6',[E_l],0)=1.
\end{align*}

The above computations also imply the equalities in the following.
\begin{align*}
N_q^{(1),(1,2)}(X_6',2[L],0)=2[2]_q, N_q^{(1,1),(2)}(X_6',2[L],0)=[2]_q,
N_q^{(1),(3)}(X_6',2[L],0)=[3]_q. 
\end{align*}

From the Caporaso-Harris recursive formula (\ref{eq:CH-2}) and the above computations, we obtain the following equalities.
\begin{align*}
&N_q^{(1,1),\emptyset}(X_6',4[L]-\sum_{i=1}^6[E_i],0)\\
&=N_q^{(1,1),(2)}(X_6',2[L],0)[2]_q+\binom{2}{1}N_q^{(1),(3)}(X_6',2[L],0)\cdot[3]_q+N_q^{\emptyset,(4)}(X_6',2[L],0)\cdot[4]_q\\
&+\frac{1}{2}\binom{2}{1}\binom{2}{1}\left(N_q^{(1),(1)}(X_6',[L],0)\right)^2
+\binom{2}{1}\binom{2}{1}N_q^{(1),(1)}(X_6',[L],0)N_q^{\emptyset,(2)}(X_6',[L],0)\cdot[2]_q\\
&+\frac{1}{2}\binom{2}{1}\left(N_q^{\emptyset,(2)}(X_6',[L],0)\right)^2\cdot[2]_q^2
+\sum_{i=1}^6N_q^{(1,1),(1)}(X_6',2[L]-[E_i],0)N_q^{\emptyset,(1)}(X_6',[E_i],0)\\
&+\sum_{i=1}^6\binom{2}{1}N_q^{(1),(2)}(X_6',2[L]-[E_i],0)N_q^{\emptyset,(1)}(X_6',[E_i],0)\cdot[2]_q\\
&+\sum_{i=1}^6N_q^{\emptyset,(3)}(X_6',2[L]-[E_i],0)N_q^{\emptyset,(1)}(X_6',[E_i],0)\cdot[3]_q\\
&+\sum_{i=1}^6\binom{2}{1}\binom{2}{1}N_q^{(1),(1)}(X_6',[L],0)N_q^{\emptyset,(1)}(X_6',[L]-[E_i],0)N_q^{\emptyset,(1)}(X_6',[E_i],0)\\
&+\sum_{i=1}^6\binom{2}{1}N_q^{\emptyset,(2)}(X_6',[L],0)N_q^{\emptyset,(1)}(X_6',[L]-[E_i],0)N_q^{\emptyset,(1)}(X_6',[E_i],0)\cdot[2]_q\\
&+\sum_{i\neq j}\binom{2}{1}N_q^{(1),(1)}(X_6',2[L]-[E_i]-[E_j],0)N_q^{\emptyset,(1)}(X_6',[E_i],0)N_q^{\emptyset,(1)}(X_6',[E_j],0)\\
&+\sum_{i\neq j}N_q^{\emptyset,(2)}(X_6',2[L]-[E_i]-[E_j],0)N_q^{\emptyset,(1)}(X_6',[E_i],0)N_q^{\emptyset,(1)}(X_6',[E_j],0)\cdot[2]_q\\
&+\sum_{i\neq j}\binom{2}{1}N_q^{\emptyset,(1)}(X_6',[L]-[E_i],0)N_q^{\emptyset,(1)}(X_6',[L]-[E_j],0)\\
&~~~~~\cdot N_q^{\emptyset,(1)}(X_6',[E_i],0)N_q^{\emptyset,(1)}(X_6',[E_j],0)\\
&+\sum_{i\neq j,i\neq k, j\neq k}N_q^{\emptyset,(1)}(X_6',2[L]-[E_i]-[E_j]-[E_k],0)N_q^{\emptyset,(1)}(X_6',[E_i],0)\\
&~~~~~\cdot N_q^{\emptyset,(1)}(X_6',[E_j],0)N_q^{\emptyset,(1)}(X_6',[E_k],0)\\
&=[2]_q^2+2[3]_q^2+[4]_q^2+2+4[2]_q^2+[2]_q^4+6+12[2]_q^2+6[3]_q^2+24+12[2]_q^2\\
&+30+15[2]_q^2+30+20\\
&=[2]_q^4+[4]_q^2+8[3]_q^2+44[2]_q^2+112.\\
&N_q^{(1),(1)}(X_6',4[L]-\sum_{i=1}^6[E_i],0)\\
=&N_q^{(1,1),\emptyset}(X_6',4[L]-\sum_{i=1}^6[E_i],0)+N_q^{(1),(1,2)}(X_6',2[L],0)\cdot[2]_q+N_q^{\emptyset,(1,3)}(X_6',2[L],0)\cdot[3]_q\\
&+\binom{3}{2}\binom{2}{1}N_q^{\emptyset,(1,1)}(X_6',[L],0)N_q^{\emptyset,(2)}(X_6',[L],0)[2]_q\\
&+\binom{3}{2}\binom{2}{1}N_q^{(1),(1)}(X_6',[L],0)N_q^{\emptyset,(1,1)}(X_6',[L],0)\\
&+\sum_{i=1}^6N_q^{\emptyset,(1,2)}(X_6',2[L]-[E_i],0)N_q^{\emptyset,(1)}(X_6',[E_i],0)\cdot[2]_q\\
&+\sum_{i=1}^6\binom{2}{1}N_q^{(1),(1,1)}(X_6',2[L]-[E_i],0)N_q^{\emptyset,(1)}(X_6',[E_i],0)\\
&+\sum_{i=1}^6\binom{3}{2}\binom{2}{1}N_q^{\emptyset,(1,1)}(X_6',[L],0)N_q^{\emptyset,(1)}(X_6',[L]-[E_i],0)N_q^{\emptyset,(1)}(X_6',[E_i],0)\\
&+\sum_{i\neq j}\binom{2}{1}N_q^{\emptyset,(1,1)}(X_6',2[L]-[E_i]-[E_j],0)N_q^{\emptyset,(1)}(X_6',[E_i],0)N_q^{\emptyset,(1)}(X_6',[E_j],0)\\
=&[2]_q^4+[4]_q^2+8[3]_q^2+44[2]_q^2+112+2[2]_q^2+2[3]_q^2
+6[2]_q^2+6+12[2]_q^2\\
&+12+36+30\\
=&[2]_q^4+[4]_q^2+10[3]_q^2+64[2]_q^2+196.\\
&N_q^{\emptyset,(1,1)}(X_6',4[L]-\sum_{i=1}^6[E_i],0)\\
=&N_q^{(1),(1)}(X_6',4[L]-\sum_{i=1}^6[E_i],0)+N_q^{\emptyset,(1,1,2)}(X_6',2[L],0)[2]_q\\
&+\frac{1}{2}\binom{4}{2,2}\binom{(1,1)}{(1)}\binom{(1,1)}{(1)}\left(N_q^{\emptyset,(1,1)}(X_6',[L],0)\right)^2\\
&+\sum_{i=1}^6\binom{3}{2}N_q^{\emptyset,(1,1,1)}(X_6',2[L]-[E_i],0)N_q^{\emptyset,(1)}(X_6',[E_i],0)\\
=&[2]_q^4+[4]_q^2+10[3]_q^2+64[2]_q^2+196+3[2]_q^2+12+18\\
=&[2]_q^4+[4]_q^2+10[3]_q^2+67[2]_q^2+226.
\end{align*}


\begin{thebibliography}{10}

\bibitem{acgs-2020}
D.~Abramovich, Q.~Chen, M.~Gross, and B.~Siebert.
\newblock {Decomposition of degenerate Gromov-Witten invariants}.
\newblock {\em Compos. Math.}, 156(10):2020--2075, 2020.

\bibitem{ab25}
H.~Arg\"{u}z and P.~Bousseau.
\newblock {BPS polynomials and Welschinger invariants}.
\newblock {\em arXiv:2506.02770}, page~48, 2025.

\bibitem{beh1997}
K.~Behrend.
\newblock {Gromov-Witten invariants in algebraic geometry}.
\newblock {\em Invent. Math.}, 127(3):601--617, 1997.

\bibitem{bs19}
L.~Blechman and E.~Shustin.
\newblock Refined descendant invariants of toric surfaces.
\newblock {\em Discrete Comput. Geom.}, 62(1):180--208, 2019.

\bibitem{bg16}
F.~Block and L.~G\"ottsche.
\newblock {Fock spaces and refined Severi degrees}.
\newblock {\em Int. Math. Res. Not. IMRN}, 2016(21):6553--6580, 2016.

\bibitem{bg2016}
F.~Block and L.~G\"{o}ttsche.
\newblock Refined curve counting with tropical geometry.
\newblock {\em Compos. Math.}, 152(1):115--151, 2016.

\bibitem{blomme20}
T.~Blomme.
\newblock {Computation of refined toric invariants II}.
\newblock {\em arXiv:2007.02275, To appear in Geom. Topol.}, page 73 pp., 2020.

\bibitem{blomme22-3}
T.~Blomme.
\newblock {Multiple cover formula for complex and refined invariants in abelian
  surfaces}.
\newblock {\em arXiv:2205.07684, To appear in Algebr. Geom.}, page~49, 2022.

\bibitem{blomme22}
T.~Blomme.
\newblock Refined count for rational tropical curves in arbitrary dimension.
\newblock {\em Math. Ann.}, 382(3-4):1199--1244, 2022.

\bibitem{blomme23}
T.~Blomme.
\newblock {Tropical curves in abelian surfaces II: Enumeration of curves in
  linear systems}.
\newblock {\em Trans. Amer. Math. Soc.}, 376(8):5641--5691, 2023.

\bibitem{blomme24-jag}
T.~Blomme.
\newblock Refined count of oriented real rational curves.
\newblock {\em J. Algebraic Geom.}, 33(1):143--197, 2024.

\bibitem{blomme24}
T.~Blomme.
\newblock {Tropical curves in abelian surfaces I: enumeration of curves passing
  through points}.
\newblock {\em Math. Proc. Cambridge Philos. Soc.}, 177(1):109--148, 2024.

\bibitem{bou2019}
P.~Bousseau.
\newblock Tropical refined curve counting from higher genera and lambda
  classes.
\newblock {\em Invent. Math.}, 215(1):1--79, 2019.

\bibitem{bou-2024}
P.~Bousseau.
\newblock {The quantum tropical vertex}.
\newblock {\em Geom. Topol.}, 24(3):1297--1379, 2020.

\bibitem{bou2021}
P.~Bousseau.
\newblock Refined floor diagrams from higher genera and lambda classes.
\newblock {\em Selecta Math. (N.S.)}, 27(3):Paper No.43, 42 pp, 2021.

\bibitem{bousseau-2023}
P.~Bousseau.
\newblock {A proof of N. Takahashi's conjecture for $(\mathbb{P}^2,E)$ and a
  refined sheaves/Gromov-Witten correspondence}.
\newblock {\em Duke Math. J.}, 172(15):2895--2955, 2023.

\bibitem{bru2015}
E.~Brugall\'{e}.
\newblock {Floor diagrams relative to a conic, and GW-W invariants of del Pezzo
  surfaces}.
\newblock {\em Adv. Math.}, 279:438--500, 2015.

\bibitem{bm2007}
E.~Brugall\'{e} and G.~Mikhalkin.
\newblock Enumeration of curves via floor diagrams.
\newblock {\em C. R. Math. Acad. Sci. Paris}, 345(6):329--334, 2007.

\bibitem{bm2008}
E.~Brugall\'{e} and G.~Mikhalkin.
\newblock {Floor decompositions of tropical curves: the planar case}.
\newblock {\em Proceedings of G\"{o}kova Geometry-Topology Conference}, pages
  64--90, 2008.

\bibitem{bp-2005}
J.~Bryan and R.~Pandharipande.
\newblock {Curves in Calabi-Yau threefolds and topological quantum field
  theory}.
\newblock {\em Duke Math. J.}, 126(2):369--396, 2005.

\bibitem{ch1998}
L.~Caporaso and J.~Harris.
\newblock Counting plan curves of any genus.
\newblock {\em Invent. Math.}, 131(2):345--392, 1998.

\bibitem{fs15}
S.~A. Filippini and J.~Stoppa.
\newblock {Block-G\"ottsche invariants from wall-crossing}.
\newblock {\em Compos. Math.}, 151(8):1543--1567, 2015.

\bibitem{fo1999}
K.~Fukaya and K.~Ono.
\newblock {Arnold conjecture and Gromov-Witten invariant}.
\newblock {\em Topology}, 38(5):933--1048, 1999.

\bibitem{fp97}
W.~Fulton and R.~Pandharipande.
\newblock Notes on stable maps and quantum cohomology.
\newblock In {\em Algebraic geometry--Santa Cruz 1995}, Symposia in Pure
  Mathematics, 62, Part 2, pages 45--96, Providence, RI, 1997. Amer. Math. Soc.

\bibitem{gm2007b}
A.~Gathmann and H.~Markwig.
\newblock {The Caporaso-Harris formula and plane relative Gromov-Witten
  invariants}.
\newblock {\em Math. Ann.}, 338(4):845--868, 2007.

\bibitem{gm2007a}
A.~Gathmann and H.~Markwig.
\newblock {The numbers of tropical plane curves through points in general
  position}.
\newblock {\em J. Reine Angew. Math.}, 602:155--177, 2007.

\bibitem{gk16}
L.~G\"ottsche and B.~Kikwai.
\newblock Refined node polynomials via long edge graphs.
\newblock {\em Commun. Number Theory Phys.}, 10(2):193--224, 2016.

\bibitem{gs19}
L.~G\"ottsche and F.~Schroeter.
\newblock Refined broccoli invariants.
\newblock {\em J. Algebraic Geom.}, 28(1):1--41, 2019.

\bibitem{gs14}
L.~G\"ottsche and V.~Shende.
\newblock Refined curve counting on complex surfaces.
\newblock {\em Geom. Topol.}, 18(4):2245--2307, 2014.

\bibitem{Graefnitz22}
T.~Graefnitz.
\newblock {Tropical correspondence for smooth del Pezzo log Calabi-Yau pairs}.
\newblock {\em J. Algebraic Geom.}, 31(4):687--749, 2022.

\bibitem{grafnitz-2025}
T.~Gr\"afnitz.
\newblock {Theta functions, broken lines and $2$-marked log Gromov-Witten
  invariants}.
\newblock {\em Manuscripta Math.}, 176(4):Paper No. 41, 34 pp, 2025.

\bibitem{grz-2024}
T.~Gr\"afnitz, H.~Ruddat, and E.~Zaslow.
\newblock {The proper Landau-Ginzburg potential is the open mirror map}.
\newblock {\em Adv. Math.}, 447:Paper No. 109639, 69 pp, 2024.

\bibitem{grzz-2025}
T.~Gr\"afnitz, H.~Ruddat, E.~Zaslow, and B.~Zhou.
\newblock Enumerative geometry of quantum periods.
\newblock {\em arXiv:2502.19408}, page 50pp, 2025.

\bibitem{gs-2013}
M.~Gross and B.~Siebert.
\newblock {Logarithmic Gromov-Witten invariants}.
\newblock {\em J. Amer. Math. Soc.}, 26(2):451--510, 2013.

\bibitem{ip1998}
E.~Ionel and T.~Parker.
\newblock {Gromov-Witten invariants of symplectic sums}.
\newblock {\em Math. Res. Lett.}, 5(5):563--576, 1998.

\bibitem{iks2009}
I.~Itenberg, V.~Kharlamov, and E.~Shustin.
\newblock {A Caporaso-Harris type formula for Welschinger invariants of real
  toric del Pezzo surfaces}.
\newblock {\em Comment. Math. Helv.}, 84(1):87--126, 2009.

\bibitem{iks2013a}
I.~Itenberg, V.~Kharlamov, and E.~Shustin.
\newblock {Welschinger invariants of real del Pezzo surfaces of degree
  $\geqslant 2$}.
\newblock {\em Internat. J. Math.}, 26(8):1550060, 63, 2015.

\bibitem{im13}
I.~Itenberg and G.~Mikhalkin.
\newblock {On Block-G\"ottsche multiplicities for planar tropical curves}.
\newblock {\em Int. Math. Res. Not. IMRN}, 2013(23):5289--5320, 2013.

\bibitem{is24-3}
I.~Itenberg and E.~Shustin.
\newblock {Real enumerative invariants relative to the toric boundary and their
  refinement II: tropical counterparts and computation in genus 1}.
\newblock {\em in preparation}.

\bibitem{is24-2}
I.~Itenberg and E.~Shustin.
\newblock {Real enumerative invariants relative to the toric boundary and their
  refinement III: tropical counterparts and computation in genus 2}.
\newblock {\em in preparation}.

\bibitem{is24}
I.~Itenberg and E.~Shustin.
\newblock {Real enumerative invariants relative to the toric boundary and their
  refinement}.
\newblock {\em arXiv:2401.06718}, page~57, 2024.

\bibitem{kc-2002}
V.~Kac and P.~Cheung.
\newblock {\em Quantum calculus}.
\newblock Universitext. Springer-Verlag, New York, 2002.

\bibitem{ksu24}
P.~Kennedy-Hunt, Q.~Shafi, and A.~Urundolil~Kumaran.
\newblock Tropical refined curve counting with descendants.
\newblock {\em Comm. Math. Phys.}, 405(10):Paper No. 240, 41 pp., 2024.

\bibitem{lr2001}
A.-M. Li and Y.~Ruan.
\newblock {Symplectic surgery and Gromov-Witten invariants of Calabi-Yau
  3-folds}.
\newblock {\em Invent. Math.}, 145(1):151--218, 2001.

\bibitem{li2002}
J.~Li.
\newblock {A degeneration formula of GW-invariants}.
\newblock {\em J. Differential Geom.}, 60(2):199--293, 2002.

\bibitem{li2004}
J.~Li.
\newblock {Lecture notes on relative GW-invariants}.
\newblock In {\em Intersection theory and moduli, ICTP Lect. Notes, XIX, pages
  $41-96$ (electronic)}. Abdus Salam Int. Cent. Theoret. Phys., Trieste, 2004.

\bibitem{li-tian-1998}
J.~Li and G.~Tian.
\newblock {Virtual moduli cycles and Gromov-Witten invariants of algebraic
  varieties}.
\newblock {\em J. Amer. Math. Soc.}, 11(1):119--174, 1998.

\bibitem{lt1998}
J.~Li and G.~Tian.
\newblock {Virtual moduli cycles and Gromov-Witten invariants of general
  symplectic manifolds}.
\newblock In {\em Topics in symplectic 4-manifold (Irvine, CA, 1996), 47-83,
  First Int. Press Lect. Ser., I}. Int. Press, Cambridge, MA,, 1998.

\bibitem{mandel21}
T.~Mandel.
\newblock Scattering diagrams, theta functions, and refined tropical curve
  counts.
\newblock {\em J. Lond. Math. Soc. (2)}, 104(5):2299--2334, 2021.

\bibitem{mikhalkin05}
G.~Mikhalkin.
\newblock Enumerative tropical algebraic geometry in $\mathbb{R}^2$.
\newblock {\em J. Amer. Math. Soc.}, 18(2):313--377, 2005.

\bibitem{mik17}
G.~Mikhalkin.
\newblock Quantum indices and refined enumeration of real plane curves.
\newblock {\em Acta Math.}, 219(1):135--180, 2017.

\bibitem{nps18}
J.~Nicaise, S.~Payne, and F.~Schroeter.
\newblock Tropical refined curve counting via motivic integration.
\newblock {\em Geom. Topol.}, 22(6):3175--3234, 2018.

\bibitem{nishinou20}
T.~Nishinou.
\newblock {Realization of tropical curves in abelian surfaces}.
\newblock {\em arXiv:2007.16148}, page~46, 2020.

\bibitem{ns-2006}
T.~Nishinou and B.~Siebert.
\newblock {Toric degenerations of toric varieties and tropical curves}.
\newblock {\em Duke Math. J.}, 135(1):1--51, 2006.

\bibitem{pand-99}
R.~Pandharipande.
\newblock {Hodge integrals and degenerate contributions}.
\newblock {\em Comm. Math. Phys.}, 208(2):489--506, 1999.

\bibitem{pand-02}
R.~Pandharipande.
\newblock {Three questions in Gromov-Witten theory}.
\newblock In {\em {Proceedings of the International Congress of
  Mathematicians}}, volume~II, pages 503--512. Higher Education Press, Beijing,
  2002.

\bibitem{pardon-2016}
J.~Pardon.
\newblock {An algebraic approach to virtual fundamental cycles on moduli spaces
  of pseudo-holomorphic curves}.
\newblock {\em Geom. Topol.}, 20(2):779--1034, 2016.

\bibitem{ruan1999}
Y.~Ruan.
\newblock {Virtual neighborhoods and pseudo-holomorphic curves}.
\newblock {\em Turkish J. Math.}, 23(1):161--231, 1999.

\bibitem{ss18}
F.~Schroeter and E.~Shustin.
\newblock Refined elliptic tropical enumerative invariants.
\newblock {\em Israel J. Math.}, 225(2):817--869, 2018.

\bibitem{ss2013}
M.~Shoval and E.~Shustin.
\newblock {On Gromov-Witten invariants of del Pezzo surfaces}.
\newblock {\em Internat. J. Math.}, 24(7):1350054, 44, 2013.

\bibitem{shustin20}
E.~Shustin.
\newblock On refined count of rational tropical curves.
\newblock {\em Pure Appl. Math. Q.}, 16(4):1027--1052, 2020.

\bibitem{v2000}
R.~Vakil.
\newblock {Counting curves on rational surfaces}.
\newblock {\em Manuscripta Math.}, 102(1):53--84, 2000.

\bibitem{wel2005a}
J.-Y. Welschinger.
\newblock Invariants of real symplectic $4$-manifolds and lower bounds in real
  enumerative geometry.
\newblock {\em Invent. Math.}, 162(1):195--234, 2005.

\bibitem{zinger-99}
A.~Zinger.
\newblock {A comparison theorem for Gromov-Witten invariants in the symplectic
  category}.
\newblock {\em Adv. Math.}, 228(1):535--574, 2011.

\end{thebibliography}

\end{document}